\numberwithin{equation}{section}
\setlist{nosep}  
\newcommand{\N}{\mathbb{N}}     
\newcommand{\R}{\mathbb{R}}     
\newcommand{\Prob}{\mathbb{P}}  
\newcommand{\Exp}{\mathbb{E}}   
\newcommand{\inner}[2]{\left\langle #1 \, , \, #2 \right\rangle} 
\newcommand{\norm}[1]{\left|\left|#1\right|\right|}              
\newcommand{\triplet}[3]{\left( #1, #2, #3 \right) }             
\newcommand{\ProbSpace}{\triplet{\Omega}{\mathscr{F}}{\Prob}}    
\newcommand{\abs}[1]{\left| #1 \right|}                          
\newcommand{\defeq}{\mathrel{\mathop:}=}                         
\newcommand\restr[2]{{
  \left.\kern-\nulldelimiterspace 
  #1 
  \vphantom{\big|} 
  \right|_{#2} 
  }}
\newsavebox{\@brx}
\newcommand{\llangle}[1][]{\savebox{\@brx}{\(\m@th{#1\langle}\)}%
  \mathopen{\copy\@brx\kern-0.5\wd\@brx\usebox{\@brx}}}
\newcommand{\rrangle}[1][]{\savebox{\@brx}{\(\m@th{#1\rangle}\)}%
  \mathclose{\copy\@brx\kern-0.5\wd\@brx\usebox{\@brx}}}
\theoremstyle{plain} 
\newtheorem{theorem}{Theorem}[section]    
\newtheorem{proposition}[theorem]{Proposition} 
\newtheorem{corollary}[theorem]{Corollary}
\newtheorem{lemma}[theorem]{Lemma}
\theoremstyle{definition} 
\newtheorem{definition}[theorem]{Definition}
\newtheorem{example}[theorem]{Example}
\newtheorem{remark}[theorem]{Remark}
 \title{\Large Convergence Uniform on Compacts in Probability with Applications to Stochastic Analysis in Duals of Nuclear Spaces}
\author{C. A. Fonseca-Mora \orcidlink{0000-0002-9280-8212}}
\affil{Centro de Investigaci\'{o}n en Matem\'{a}tica Pura y Aplicada, \\ Escuela de Matem\'{a}tica, Universidad de Costa Rica. \\
\noindent E-mail:  christianandres.fonseca@ucr.ac.cr }
\date{}
\begin{document}

 \maketitle

\abstract{Let $\Phi'$ denote the strong dual of a nuclear space $\Phi$. In this paper we introduce sufficient conditions for the convergence uniform on compacts in probability for a sequence of $\Phi'$-valued processes with continuous or c\`{a}dl\`{a}g paths. 
We illustrate the usefulness of our results by considering two applications to stochastic analysis. First, we introduce a topology on the space of $\Phi'$-valued semimartingales which are good integrators and show that this topology is complete and that the stochastic integral mapping is continuous on the integrators. Second, we introduce sufficient conditions for the convergence uniform on compacts in probability of the solutions to a sequence of  linear stochastic evolution equations driven by semimartingale noise. }

\smallskip

\emph{2020 Mathematics Subject Classification:} 60B11, 60G48, 60H05, 60H15.

\emph{Key words and phrases:} convergence uniform on compacts in probability, semimartingales, stochastic integral, dual of a nuclear space.

\section{Introduction}


Let $\Phi$ be a nuclear space with strong dual $\Phi'$ and canonical pairing $\inner{f}{\phi}$ for $f \in \Phi'$ and $\phi \in \Phi$.  A $\Phi'$-valued stochastic process $(X_{t}:t \geq 0)$ is a semimartingale if  for each $\phi \in \Phi$ the real-valued process $\inner{X}{\phi}=(\inner{X_{t}}{\phi}: t \geq 0)$ is a semimartingale. The study of properties of $\Phi'$-valued semimartingales and stochastic calculus with respect to them has been explored in many works, see e.g.  \cite{BojdeckiGorostiza:1991, DawsonGorostiza:1990, FonsecaMora:Semimartingales, PerezAbreu:1988, PerezAbreuTudor:1992, PerezAbreuRochaArteagaTudor:2005, Ustunel:1982, Ustunel:1982-1, Ustunel:1986}.

In \cite{FonsecaMora:StochInteg} the author introduced a theory of real-valued stochastic integration with respect to semimartingales in duals of nuclear spaces. 
Indeed, if $X$ is a $\Phi'$-valued semimartingale and $H$ is a $\Phi$-valued process which is predictable and bounded, it is shown in \cite{FonsecaMora:StochInteg} that there corresponds a real-valued semimartingale $\int \, H \, dX$ called the \emph{stochastic integral}, and the \emph{stochastic integral mapping} $H \mapsto \int \, H \, dX$ is linear.
Whenever it happens that the mapping $H \mapsto \int \, H \, dX$ is continuous from the space of $\Phi$-valued bounded predictable processes into the space $S^{0}$ of real-valued semimartingales (with Emery's topology), we say that $X$ is a \emph{good integrator}. 

It was shown in Section 5 in \cite{FonsecaMora:StochInteg} that if $X$ is a good integrator, the stochastic integral possesses some important properties like the existence of a Riemann representation formula, a stochastic integration by parts formula, and a stochastic Fubini theorem.

Besides, in \cite{FonsecaMora:StochInteg} some other deeper properties of the stochastic integral were not studied; as are for example the continuity of the stochastic integral mapping on the integrators, the existence of a quadratic variation, the establishment of an It\^{o} formula, or the existence of a  vector-valued extension for the theory of stochastic integration. In order to prove these properties one requires to have a topology for the space of good integrators, as well to have a  better understanding of the convergence uniform  in probability on compact intervals of time (abreviated as UCP convergence). 

Motivated by the above, in this paper we introduce sufficient conditions for the UCP convergence of a sequence of $\Phi'$-valued processes. To illustrate the usefulness of our results on UCP convergence, in this article we apply them to the construction of a topology on the space of good integrators which makes the stochastic integral mapping continuous in the integrators, and to the UCP convergence of solutions to a sequence of stochastic evolution equations with semimartingale noise. These results will be applied in subsequent publications to prove further properties of the stochastic integral with respect to $\Phi'$-valued semimartingales, as for example are the properties described in the above paragraph. 

In the next few paragraphs we describe our results. In Section \ref{sectionPrelim} we list some important notions concerning nuclear spaces and their duals, cylindrical and stochastic processes, and the topology of uniform convergence on the space of continuous mappings. In Section \ref{sectUCP} our main result is Theorem \ref{theoUCPConver} where we provide sufficient conditions for the UCP convergence of stochastic processes in $\Phi'$ with continuous paths. Roughly speaking, if $(X^{n}:n \in \N)$ is a sequence of $\Phi'$-valued processes, under standard assumptions for $X^{n}$, we show that if $\inner{X^{n}}{\phi}$ converges in UCP to $\inner{X}{\phi}$ for every $\phi \in \Phi$, then $X^{n}$ converges to $X$ in UCP as $\Phi'$-valued processes. 
To the extent of our knowledge this result has not been considered in the literature, not even for particular classes of nuclear spaces. A specialized version of Theorem \ref{theoUCPConver} for the case when $\Phi$ is ultrobornological is given in Section \ref{subSecUltrobornoUCP}, and applications of Theorem \ref{theoUCPConver} to UCP convergence of $\Phi'$-valued  semimartingales are given in Section \ref{subSectSemimar}. 


The main goal of Section \ref{subSectGoodInteg}  is the introduction of a topology of the space of good integrators and to show some of its properties. We start in Section \ref{subSecRealStochInteg} with a review of the rudiments of the construction and properties of the stochastic integral introduced in  \cite{FonsecaMora:StochInteg}. Next, in Section \ref{subSectGoodIntegFrechet} under the assumption that $\Phi$ is a Fr\'{e}chet nuclear space we introduce a topology for the space of good integrators and show that it is linear, metrizable and complete  (see Theorem \ref{theoCompleteSpaceIntegrators}). As a consequence we show that the bilinear form defined by the stochastic integral is continuous (Corollary \ref{coroContiStochIntegralMapping}). We further show that the collection of those good integrators in $\Phi'$ that are continuous local martingales is a closed subspace (Proposition \ref{propContLocaMartiClosedSubspace}).   Finally, in Section \ref{subSectGoodIntegInducLimitFrechet} we extend our definition of the topology on the space of good integrators to the case where  $\Phi$ is a strict inductive limit of a sequence of Fr\'{e}chet nuclear spaces. The resulting topology is also linear, metrizable and complete, and the bilinear form defined by the stochastic integral is continuous (Corollary \ref{coroContiBilinearMappingLFNuclearSpace}).
We are not aware of any work that considers the introduction of a topology on the space of semimartingales taking values in duals of nuclear spaces. 

In Section \ref{sectUCPConvSPDEs} we apply the tools developed in Section \ref{sectUCP} to provide sufficient conditions for the UCP convergence of the solutions to the sequence of linear stochastic evolution equations 
$$ dY^{n}=(A^{n})'Y^{n}_{t} dt + dZ^{n}_{t}, $$
where for each $n \in \N$, $A^{n}$ is the generator to a $C_{0}$-semigroup of continuous linear operators on $\Phi$, and $Z^{n}$ is a $\Phi'$-valued semimartingale. Existence and uniqueness of solutions to such stochastic evolution equations were studied by the author in \cite{FonsecaMora:StochInteg}.

\section{Preliminaries} \label{sectionPrelim}

\subsection{Nuclear spaces and their strong duals}

Let $\Phi$ be a locally convex space (we will only consider vector spaces over $\R$). $\Phi$ is called \emph{bornological} (respectively,  \emph{ultrabornological}) if it is the inductive limit of a family of normed (respectively, Banach) spaces. A \emph{barreled space} is a locally convex space such that every convex, balanced, absorbing and closed subset is a neighborhood of zero. For further details see \cite{Jarchow, NariciBeckenstein}.   

If $p$ is a continuous semi-norm on $\Phi$ and $r>0$, the closed ball of radius $r$ of $p$, given by $B_{p}(r) = \left\{ \phi \in \Phi: p(\phi) \leq r \right\}$, is a closed, convex, balanced neighborhood of zero in $\Phi$. 
A continuous seminorm (respectively norm) $p$ on $\Phi$ is called \emph{Hilbertian} if $p(\phi)^{2}=Q(\phi,\phi)$, for all $\phi \in \Phi$, where $Q$ is a symmetric non-negative bilinear form (respectively inner product) on $\Phi \times \Phi$. For any given continuous seminorm $p$ on $\Phi$ let $\Phi_{p}$ be the Banach space that corresponds to the completion of the normed space $(\Phi / \mbox{ker}(p), \tilde{p})$, where $\tilde{p}(\phi+\mbox{ker}(p))=p(\phi)$ for each $\phi \in \Phi$. We denote by  $\Phi'_{p}$ the dual to the Banach space $\Phi_{p}$ and by $p'$ the corresponding dual norm. Observe that if $p$ is Hilbertian then $\Phi_{p}$ and $\Phi'_{p}$ are Hilbert spaces. If $q$ is another continuous seminorm on $\Phi$ for which $p \leq q$, we have that $\mbox{ker}(q) \subseteq \mbox{ker}(p)$ and the canonical inclusion map from $\Phi / \mbox{ker}(q)$ into $\Phi / \mbox{ker}(p)$ has a unique continuous and linear extension that we denote by $i_{p,q}:\Phi_{q} \rightarrow \Phi_{p}$. Furthermore, we have the following relation: $i_{p}=i_{p,q} \circ i_{q}$.

We denote by $\Phi'$ the topological dual of $\Phi$ and by $\inner{f}{\phi}$ the canonical pairing of elements $f \in \Phi'$, $\phi \in \Phi$. Unless otherwise specified, $\Phi'$ will always be consider equipped with its \emph{strong topology}, i.e. the topology on $\Phi'$ generated by the family of semi-norms $( \eta_{B} )$, where for each $B \subseteq \Phi$ bounded, $\eta_{B}(f)=\sup \{ \abs{\inner{f}{\phi}}: \phi \in B \}$ for all $f \in \Phi'$.  

Let $p$ and $q$ be continuous Hilbertian semi-norms on $\Phi$ such that $p \leq q$. The space of continuous linear operators (respectively Hilbert-Schmidt operators) from $\Phi_{q}$ into $\Phi_{p}$ is denoted by $\mathcal{L}(\Phi_{q},\Phi_{p})$ (respectively $\mathcal{L}_{2}(\Phi_{q},\Phi_{p})$). We employ an analogous notation for operators between the dual spaces $\Phi'_{p}$ and $\Phi'_{q}$. 

Let us recall that a (Hausdorff) locally convex space $(\Phi,\mathcal{T})$ is called \emph{nuclear} if its topology $\mathcal{T}$ is generated by a family $\Pi$ of Hilbertian semi-norms such that for each $p \in \Pi$ there exists $q \in \Pi$, satisfying $p \leq q$ and the canonical inclusion $i_{p,q}: \Phi_{q} \rightarrow \Phi_{p}$ is Hilbert-Schmidt. Other equivalent definitions of nuclear spaces can be found in \cite{Pietsch, Treves}. 

Let $\Phi$ be a nuclear space. If $p$ is a continuous Hilbertian semi-norm  on $\Phi$, then the Hilbert space $\Phi_{p}$ is separable (see \cite{Pietsch}, Proposition 4.4.9 and Theorem 4.4.10, p.82). Now, let $( p_{n} : n \in \N)$ be an increasing sequence of continuous Hilbertian semi-norms on $(\Phi,\mathcal{T})$. We denote by $\theta$ the locally convex topology on $\Phi$ generated by the family $( p_{n} : n \in \N)$. The topology $\theta$ is weaker than $\mathcal{T}$. We  will call $\theta$ a (weaker) \emph{countably Hilbertian topology} on $\Phi$ and we denote by $\Phi_{\theta}$ the space $(\Phi,\theta)$ and by $\widehat{\Phi}_{\theta}$ its completion. The space $\widehat{\Phi}_{\theta}$ is a (not necessarily Hausdorff) separable, complete, pseudo-metrizable (hence Baire and ultrabornological; see Example 13.2.8(b) and Theorem 13.2.12 in \cite{NariciBeckenstein}) locally convex space and its dual space satisfies $(\widehat{\Phi}_{\theta})'=(\Phi_{\theta})'=\bigcup_{n \in \N} \Phi'_{p_{n}}$ (see \cite{FonsecaMora:Existence}, Proposition 2.4).

\subsection{Cylindrical and stochastic processes}
Let $E$ be a topological space and denote by $\mathcal{B}(E)$ its Borel $\sigma$-algebra. Recall that a Borel measure $\mu$ on $E$ is called a \emph{Radon measure} if for every $\Gamma \in \mathcal{B}(E)$ and $\epsilon >0$, there exist a compact set $K \subseteq \Gamma$ such that $\mu(\Gamma \backslash K) < \epsilon$. 

Throughout this work we assume that $\ProbSpace$ is a complete probability space and consider a filtration $( \mathcal{F}_{t} : t \geq 0)$ on $\ProbSpace$ that satisfies the \emph{usual conditions}, i.e. it is right continuous and $\mathcal{F}_{0}$ contains all subsets of sets of $\mathcal{F}$ of $\Prob$-measure zero. The space $L^{0} \ProbSpace$ of equivalence classes of real-valued random variables defined on $\ProbSpace$ will always be considered equipped with the topology of convergence in probability and in this case it is a complete, metrizable, topological vector space.

Let $\Phi$ be a locally convex space. A \emph{cylindrical random variable}\index{cylindrical random variable} in $\Phi'$ is a linear map $X: \Phi \rightarrow L^{0} \ProbSpace$ (see \cite{FonsecaMora:Existence}). Let $X$ be a $\Phi'$-valued random variable, i.e. $X:\Omega \rightarrow \Phi'$ is a $\mathscr{F}/\mathcal{B}(\Phi')$-measurable map. For each $\phi \in \Phi$ we denote by $\inner{X}{\phi}$ the real-valued random variable defined by $\inner{X}{\phi}(\omega) \defeq \inner{X(\omega)}{\phi}$, for all $\omega \in \Omega$. The linear mapping $\phi \mapsto \inner{X}{\phi}$ is called the \emph{cylindrical random variable induced/defined by} $X$.


Let $J=\R_{+} \defeq [0,\infty)$ or $J=[0,T]$ for  $T>0$. We say that $X=( X_{t}: t \in J)$ is a \emph{cylindrical process} in $\Phi'$ if $X_{t}$ is a cylindrical random variable for each $t \in J$. Clearly, any $\Phi'$-valued stochastic processes $X=( X_{t}: t \in J)$ induces/defines a cylindrical process under the prescription: $\inner{X}{\phi}=( \inner{X_{t}}{\phi}: t \in J)$, for each $\phi \in \Phi$. 

If $X$ is a cylindrical random variable in $\Phi'$, a $\Phi'$-valued random variable $Y$ is called a \emph{version} of $X$ if for every $\phi \in \Phi$, $X(\phi)=\inner{Y}{\phi}$ $\Prob$-a.e. A $\Phi'$-valued process $Y=(Y_{t}:t \in J)$ is said to be a $\Phi'$-valued \emph{version} of the cylindrical process $X=(X_{t}: t \in J)$ on $\Phi'$ if for each $t \in J$, $Y_{t}$ is a $\Phi'$-valued version of $X_{t}$.

A $\Phi'$-valued process $X=( X_{t}: t \in J)$ is  \emph{continuous} (respectively \emph{c\`{a}dl\`{a}g}) if for $\Prob$-a.e. $\omega \in \Omega$, the \emph{sample paths} $t \mapsto X_{t}(\omega) \in \Phi'$ of $X$ are continuous (respectively c\`{a}dl\`{a}g). 


A $\Phi'$-valued random variable $X$ is called \emph{regular} if there exists a weaker countably Hilbertian topology $\theta$ on $\Phi$ such that $\Prob( \omega: X(\omega) \in (\widehat{\Phi}_{\theta})')=1$. If $\Phi$ is a barrelled (e.g. ultrabornological) nuclear space, the property of being regular is  equivalent to the property that the law of $X$ be a Radon measure on $\Phi'$ (see Theorem 2.10 in \cite{FonsecaMora:Existence}).  A $\Phi'$-valued process $X=(X_{t}:t \geq 0)$ is said to be \emph{regular} if for each $t \geq 0$, $X_{t}$ is a regular random variable. 

\subsection{Topology of UCP convergence}\label{subSectDefiUCP}

Let $\Phi$ be a Hausdorff locally convex space and let $\Pi$ denotes a system of seminorms generating the  topology on $\Phi'$.
Denote by $\mathbbm{C}(\Phi')$ the linear space of all the $\Phi'$-valued continuous processes $X=(X_{t}: t \geq 0)$.

The \emph{topology of convergence uniformly on compacts in probability} (abbreviated as \emph{UCP}) on $\mathbbm{C}(\Phi')$  is the linear topology whose fundamental system of neighborhoods of zero for this topology is the family of all the sets of the form 
$$\left\{ X \in \mathbbm{C}(\Phi') : \Prob \left( \sup_{0 \leq t \leq T} p(X_{t}) \geq \epsilon \right) < \eta \right\}, \quad \, \forall \, \epsilon, \eta, T>0, \, p \in \Pi. $$

 Let $X$ and $(X^{n}: n \in \N)$, with $X^{n} =(X^{n}_{t}: t \geq 0)$, be $\Phi'$-valued continuous processes. 
 We say that $X^{n}$ converges to $X$ \emph{uniformly on compacts in probability}, abbreviated as $X^{n} \overset{ucp}{\rightarrow} X$, if $X^{n}$ converges to $X$ in the UCP topology, i.e.  if for every choice of $T>0$, $\epsilon >0$, and every continuous seminorm $p$ on $\Phi'$ we have 
$$ \lim_{n \rightarrow \infty} \Prob \left( \sup_{0 \leq t \leq T} p(X^{n}_{t}-X_{t}) \geq \epsilon \right)=0.  $$



As the next result shows UCP convergence for $\Phi'$-valued continuous processes imply that the finite dimensional projections of the sequence converges in UCP.

\begin{proposition}\label{propUCPFiniteDimensionProjections}
Let $X=(X_{t}: t \geq 0)$ and  $X^{n} =(X^{n}_{t}: t \geq 0)$, $n \in \N$, be $\Phi'$-valued processes with continuous  paths. If $X^{n} \overset{ucp}{\rightarrow} X$, then for every $m \in \N$, $\phi_{1}, \cdots, \phi_{m} \in \Phi$, we have 
$$\left(\inner{X^{n}}{\phi_{1}}, \cdots, \inner{X^{n}}{\phi_{m}} \right) \overset{ucp}{\rightarrow} \left( \inner{X}{\phi_{1}}, \cdots, \inner{X}{\phi_{m}} \right).$$
\end{proposition}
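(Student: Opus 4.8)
The plan is to use the elementary fact that each $\phi \in \Phi$ induces a continuous seminorm on the strong dual $\Phi'$ through the canonical pairing, and then to deduce the vector-valued UCP convergence from the scalar UCP convergence of the individual pairings by means of a union bound.

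First I would record the relevant seminorm structure. For each $\phi \in \Phi$ the singleton $\{\phi\}$ is a bounded subset of $\Phi$, so the map $q_{\phi}(f) \defeq \abs{\inner{f}{\phi}}$ coincides with the generating seminorm $\eta_{\{\phi\}}$ and is therefore a continuous seminorm on $\Phi'$ for its strong topology. Since the paths of $X^{n}$ and $X$ are continuous in $\Phi'$ and each $q_{\phi_{j}}$ is continuous, the real-valued processes $\inner{X^{n}}{\phi_{j}}$ and $\inner{X}{\phi_{j}}$ have continuous paths; consequently the $\R^{m}$-valued processes $Y^{n}\defeq(\inner{X^{n}}{\phi_{1}}, \dots, \inner{X^{n}}{\phi_{m}})$ and $Y\defeq(\inner{X}{\phi_{1}}, \dots, \inner{X}{\phi_{m}})$ are continuous, so the statement $Y^{n} \overset{ucp}{\rightarrow} Y$ is meaningful in the space of $\R^{m}$-valued continuous processes.

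Next, because all norms on $\R^{m}$ are equivalent, the topology of $\R^{m}$ is generated by the single norm $\norm{y}_{\infty} = \max_{1 \leq j \leq m} \abs{y_{j}}$, and it suffices to verify UCP convergence against this norm (convergence for any other continuous seminorm then follows by domination). Interchanging the supremum over $t$ with the finite maximum over $j$, I would write, for arbitrary $T, \epsilon > 0$,
\[ \sup_{0 \leq t \leq T} \norm{Y^{n}_{t} - Y_{t}}_{\infty} = \max_{1 \leq j \leq m} \, \sup_{0 \leq t \leq T} q_{\phi_{j}}(X^{n}_{t} - X_{t}). \]
A union bound over the finitely many indices then yields
\[ \Prob \left( \sup_{0 \leq t \leq T} \norm{Y^{n}_{t} - Y_{t}}_{\infty} \geq \epsilon \right) \leq \sum_{j=1}^{m} \Prob \left( \sup_{0 \leq t \leq T} q_{\phi_{j}}(X^{n}_{t} - X_{t}) \geq \epsilon \right). \]
Each summand converges to $0$ as $n \to \infty$ by the hypothesis $X^{n} \overset{ucp}{\rightarrow} X$ applied to the continuous seminorm $q_{\phi_{j}}$ on $\Phi'$, and since the sum is finite the left-hand side converges to $0$ as well, which is precisely $Y^{n} \overset{ucp}{\rightarrow} Y$.

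I do not expect any substantial obstacle here: the argument is essentially a soft manipulation. The only two points that merit care are the identification of $\abs{\inner{\cdot}{\phi}}$ as a continuous seminorm on the strong dual $\Phi'$ (which relies only on singletons being bounded in $\Phi$) and the reduction of the vector-valued UCP topology to the scalar seminorms $q_{\phi_{j}}$, achieved through the equivalence of norms on $\R^{m}$ together with the union bound over the finitely many indices.
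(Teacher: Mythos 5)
Your proposal is correct and follows essentially the same route as the paper: both rest on the observation that a singleton $\{\phi\}$ is bounded in $\Phi$, so $f \mapsto \abs{\inner{f}{\phi}}$ is a continuous seminorm on the strong dual $\Phi'$, after which the UCP hypothesis applies directly. The only difference is that the paper simply asserts that it suffices to check each coordinate separately, whereas you spell out that reduction via the max norm on $\R^{m}$ and a union bound — a harmless elaboration of the same argument.
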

\begin{proof} It suffices to check that for each $\phi \in \Phi$ we have $\inner{X^{n}}{\phi} \overset{ucp}{\rightarrow} \inner{X}{\phi}$. Let $\phi \in \Phi$. The set $\{\phi\}$ is bounded in $\Phi$, then $p_{\{\phi\}}(f)=\abs{\inner{f}{\phi}}$ $\forall f \in \Phi'$ is a continuous seminorm on $\Phi'$. Since $X^{n} \overset{ucp}{\rightarrow} X$, we have for any $T>0$, $\epsilon >0$, 
$$ \lim_{n \rightarrow \infty} \Prob \left( \sup_{0 \leq t \leq T} \abs{\inner{X^{n}_{t}}{\phi} - \inner{X_{t}}{\phi}} \geq \epsilon \right)=\lim_{n \rightarrow \infty} \Prob \left( \sup_{0 \leq t \leq T} p_{\phi}(X^{n}_{t}-X_{t}) \geq \epsilon \right) = 0. $$
\end{proof}



We denote by $C_{\infty}(\Phi')$ the collection of all continuous mappings $x:[0,\infty) \rightarrow \Phi'$. We equip $C_{\infty}(\Phi')$ with the topology of \emph{uniform convergence on compact intervals of time}, i.e. if $\Pi$ is any generating family of seminorms for the topology on $\Phi'$, the topology on $C_{\infty}(\Phi')$ is the locally convex topology generated by the family of seminorms
$$ x \mapsto \sup_{t \in [0,T]} p(x(t)), \quad \forall \, p \in \Pi, \, T >0. $$
If $X$ is a $C_{\infty}(\Phi')$-valued random variable, then $X$  determines a $\Phi'$-valued continuous process.

Now we explore the relation between UCP convergence and weak convergence in $C_{\infty}(\Phi')$.
Assume  $X$ and each $X^{n}$ is a random variable in $C_{\infty}(\Phi')$. Then it is clear that $X^{n} \overset{ucp}{\rightarrow} X$ if and only if $X^{n} \rightarrow X$ in probability as elements in $C_{\infty}(\Phi')$. In any of these cases we have  $X^{n} \Rightarrow X$ in $C_{\infty}(\Phi')$. As expected the converse holds if the limit is deterministic, since we will need this result latter we include a full proof for this fact.  

\begin{lemma}\label{lemmaWeakImpliesUCP}
If $Y^{n}$ is a sequence of $C_{\infty}(\Phi')$-valued random variables satisfying $Y^{n} \Rightarrow 0$ in $C_{\infty}(\Phi')$, then $Y^{n} \overset{ucp}{\rightarrow} 0$. 
\end{lemma}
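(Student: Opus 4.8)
The plan is to exploit the standard principle that convergence in distribution to a deterministic limit upgrades to convergence in probability, adapted to the (possibly non-metrizable) space $C_{\infty}(\Phi')$. Recall that $Y^{n} \overset{ucp}{\rightarrow} 0$ is equivalent to $Y^{n} \rightarrow 0$ in probability as $C_{\infty}(\Phi')$-valued random variables, which in turn means that for every continuous seminorm $p$ on $\Phi'$, every $T>0$ and every $\epsilon>0$ we have $\Prob \left( \sup_{0 \leq t \leq T} p(Y^{n}_{t}) \geq \epsilon \right) \rightarrow 0$. It therefore suffices to fix such $p$, $T$, $\epsilon$ and establish this single limit.

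First I would introduce the seminorm $q(x) \defeq \sup_{t \in [0,T]} p(x(t))$ on $C_{\infty}(\Phi')$, which is continuous because it is dominated by a finite maximum of the generating seminorms for the topology of uniform convergence on compacts. The key device is then a bounded continuous test function that separates $0$ from the closed set $\{ q \geq \epsilon \}$: I would set
$$ g(x) \defeq \left( 1 - \frac{q(x)}{\epsilon} \right)^{+} = \max\left\{ 0, \, 1 - \frac{q(x)}{\epsilon} \right\}, $$
which takes values in $[0,1]$, is continuous (being the composition of the continuous seminorm $q$ with a continuous real function), satisfies $g(0)=1$, and vanishes on $\{ x : q(x) \geq \epsilon \}$.

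The conclusion then follows by combining two observations. Since $g$ is bounded and continuous, the weak convergence $Y^{n} \Rightarrow 0$ gives $\Exp[ g(Y^{n}) ] \rightarrow g(0) = 1$. On the other hand, because $0 \leq g \leq 1$ and $g$ is supported on $\{ q < \epsilon \}$, we have the elementary bound $\Exp[ g(Y^{n}) ] \leq \Prob( q(Y^{n}) < \epsilon ) = 1 - \Prob( q(Y^{n}) \geq \epsilon )$. Passing to the limit yields $1 \leq 1 - \limsup_{n} \Prob( q(Y^{n}) \geq \epsilon )$, whence $\Prob \left( \sup_{0 \leq t \leq T} p(Y^{n}_{t}) \geq \epsilon \right) \rightarrow 0$, which is exactly what was required.

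I do not anticipate a genuine obstacle here: the only point requiring care is that $C_{\infty}(\Phi')$ need not be metrizable, so one cannot simply invoke the metric-space version of the Portmanteau theorem. Constructing the explicit bump function $g$ bypasses this entirely, since the definition of weak convergence through expectations of bounded continuous functions is all that is used.
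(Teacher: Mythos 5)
Your proof is correct and takes essentially the same route as the paper: both reduce the claim to showing $\Prob\left( \sup_{0 \leq t \leq T} p(Y^{n}_{t}) \geq \epsilon \right) \rightarrow 0$ by introducing the continuous seminorm $q(x)=\sup_{t \in [0,T]} p(x(t))$ on $C_{\infty}(\Phi')$ and exploiting weak convergence of the laws to the Dirac measure $\delta_{0}$. The only difference is that the paper concludes by citing Alexandrov's (portmanteau-type) theorem for this step, while you prove the required inequality directly via the explicit bump function $g=(1-q/\epsilon)^{+}$, which is a self-contained inlining of the same standard fact and has the minor virtue of not needing a portmanteau theorem valid on non-metrizable spaces.
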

\begin{proof}
For each $n \in \N$, let $\mu_{n}$ be the distribution of $Y^{n}$ on $C_{\infty}(\Phi')$. Choose any $T>0$, $\epsilon >0$, and a continuous seminorm $p$ on $\Phi'$.  Let $q_{p,T}(x)=\sup_{t \in [0,T]} p(x(t))$ $\forall x \in C_{\infty}(\Phi')$, which is a continuous seminorm on $C_{\infty}(\Phi')$.

By Alexandrov's Theorem (see Theorem I.3.5 in \cite{VakhaniaTarieladzeChobanyan}, p.42), since $Y^{n} \Rightarrow 0$ in $C_{\infty}(\Phi')$ we have
$$
\lim_{n \rightarrow \infty} \Prob \left( \sup_{t \in [0,T]} p(Y^{n}_{t}) \geq \epsilon \right) 
= \lim_{n \rightarrow \infty} \mu_{n} \left( B_{q_{p,T}}(\epsilon)^{c} \right) \leq  \delta_{0} \left( B_{q_{p,T}}(\epsilon)^{c} \right) =0,
$$
where $\delta_{0}$ is the Dirac measure on $C_{\infty}(\Phi')$. Then $Y^{n} \overset{ucp}{\rightarrow} 0$. 
\end{proof}

The following result,  proved in \cite{FonsecaMora:WeakConverg}, introduces sufficient conditions for the weak convergence on $C_{\infty}(\Phi')$ for a sequence of (cylindrical) stochastic processes in $\Phi'$ for a nuclear space $\Phi$. It will play a major role in our proof  of Theorem \ref{theoUCPConver} in Section \ref{subSectSuffiCondiUCPConvergence}.

\begin{theorem}[\cite{FonsecaMora:WeakConverg}, Theorem 5.1] \label{theoWeakConvergCylinProcesInCTInfy}
Let $\Phi$ be a nuclear space and let $(X^{n}: n \in \N)$, with $X^{n} =(X^{n}_{t}: t \geq 0)$, be a sequence of cylindrical process in $\Phi'$ satisfying:
\begin{enumerate}
\item For every $n \in \N$ and $\phi \in \Phi$ the real-valued process $X^{n}(\phi)=( X^{n}_{t}(\phi): t \geq 0)$ is continuous.
\item For every $T > 0$, the family $( X^{n}_{t}: t \in [0,T], n \in \N )$ of linear maps from $\Phi$ into $L^{0} \ProbSpace$ is equicontinuous at zero.   
\item For each $\phi \in \Phi$, the sequence of distributions of $X^{n}(\phi)$ is uniformly tight on $C_{\infty}(\R)$.
\item $\forall$ $m \in \N$, $\phi_{1}, \dots, \phi_{m} \in \Phi$, $t_{1}, \dots, t_{m} \geq 0$, the distribution of the $m$-dimensional random vector $(X_{t_{1}}^{n}(\phi_{1}), \cdots, X_{t_{m}}^{n}(\phi_{m}))$ converges in distribution to some probability measure on $\R^{m}$. 
\end{enumerate}
Then there exist a weaker countably Hilbertian topology $\theta$ on $\Phi$ and some  $C_{\infty}((\widehat{\Phi}_{\theta})')$-valued random variables  $Y$ and  $Y^{n}$, $n \in \N$,  such that 
\begin{enumerate}[label=(\roman*)]
\item For every $\phi \in \Phi$ and $n \in \N$, the real-valued processes $\inner{Y^{n}}{\phi}$ and $X^{n}(\phi)$ are indistinguishable.
\item The sequence $(Y_{n}:n \in \N)$ is tight on $C_{\infty}((\widehat{\Phi}_{\theta})')$.
\item $Y^{n} \Rightarrow Y$ in $C_{\infty}((\widehat{\Phi}_{\theta})')$
\end{enumerate}
Moreover, (ii) and (iii) are also satisfied for $Y$ and $(Y^{n}: n \in \N)$ as $C_{\infty}(\Phi')$-valued random variables. 
\end{theorem}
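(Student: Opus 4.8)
The plan is to prove the statement by combining a \emph{regularization} step, which produces continuous path-valued versions living in a suitable Hilbert subspace of $\Phi'$, with a \emph{Mitoma-type tightness} argument and a finite-dimensional identification of the weak limit. The four hypotheses map naturally onto these tasks: (1) and (2) feed the regularization, (3) feeds the tightness, and (4) pins down the limit.

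First I would exploit the equicontinuity hypothesis (2) together with nuclearity to manufacture the topology $\theta$. Equicontinuity at zero of the family $(X^{n}_{t}: t \in [0,T], n \in \N)$ yields, for each time horizon $T=k \in \N$, a continuous Hilbertian seminorm on $\Phi$ controlling all the maps $X^{n}_{t}$ uniformly in $n$ and in $t \leq k$; by the defining property of a nuclear space each such seminorm is dominated by a larger Hilbertian seminorm whose canonical inclusion is Hilbert--Schmidt. Arranging these into a single increasing sequence $(p_{k}:k \in \N)$ defines a weaker countably Hilbertian topology $\theta$, for which $(\widehat{\Phi}_{\theta})'=\bigcup_{k} \Phi'_{p_{k}}$ as recorded in the preliminaries. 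Using a radonification/existence-of-version theorem for cylindrical random variables that are continuous with respect to a Hilbertian seminorm admitting a Hilbert--Schmidt predecessor (as in \cite{FonsecaMora:Existence}), each $X^{n}_{t}$ acquires a regular version in $(\widehat{\Phi}_{\theta})'$; continuity of the scalar processes (hypothesis (1)) together with the uniform control then upgrades these to continuous-path processes $Y^{n}$ with $\inner{Y^{n}}{\phi}$ indistinguishable from $X^{n}(\phi)$, which is exactly (i).

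Next I would establish the tightness (ii) on $C_{\infty}((\widehat{\Phi}_{\theta})')$. The key is a Mitoma-type criterion: for the dual of a countably Hilbertian nuclear space, tightness of the laws of the path-valued variables reduces to tightness of each one-dimensional projection $\inner{Y^{n}}{\phi}$ on $C_{\infty}(\R)$ --- precisely hypothesis (3) --- together with a compact-containment estimate at fixed times. The passage from coordinatewise to joint tightness is where the Hilbert--Schmidt embeddings do the real work: fixing $k$ and an orthonormal basis $(\phi^{k}_{j})$ of $\Phi_{p_{k}}$, the summability $\sum_{j} \norm{i_{p_{k-1},p_{k}}\phi^{k}_{j}}^{2}<\infty$ lets one sum the coordinatewise moduli of continuity and supremum estimates into a uniform bound on the $\Phi'_{p_{k-1}}$-norm, producing the compact sets in $C_{\infty}((\widehat{\Phi}_{\theta})')$ required by Prokhorov's theorem. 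I expect this step to be the main obstacle, since it demands careful uniform-in-$n$ control of the $\Phi'_{p_{k-1}}$-valued modulus of continuity and a delicate interplay between the equicontinuity, the scalar tightness, and the nuclear embeddings.

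Finally I would identify the limit and obtain (iii). Tightness plus Prokhorov gives relative compactness of $(Y^{n})$, so it suffices to show that every subsequential weak limit has the same law. The finite-dimensional evaluation maps $x \mapsto (\inner{x(t_{1})}{\phi_{1}}, \dots, \inner{x(t_{m})}{\phi_{m}})$ are continuous on $C_{\infty}((\widehat{\Phi}_{\theta})')$, so by the continuous mapping theorem the finite-dimensional distributions of any such limit are the $\R^{m}$-limits furnished by hypothesis (4); since the finite-dimensional distributions determine a Radon law on the path space, all subsequential limits coincide with a common $Y$, whence $Y^{n} \Rightarrow Y$. The closing assertion transfers (ii) and (iii) to $C_{\infty}(\Phi')$ by pushing forward along the continuous inclusion $(\widehat{\Phi}_{\theta})' \hookrightarrow \Phi'$, which induces a continuous map $C_{\infty}((\widehat{\Phi}_{\theta})') \to C_{\infty}(\Phi')$ under which both tightness and weak convergence are preserved.
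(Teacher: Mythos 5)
You should know at the outset that this paper never proves the statement you were asked to prove: it is imported verbatim, with attribution, from the author's preprint \cite{FonsecaMora:WeakConverg} (Theorem 5.1), and no argument for it appears anywhere in the present text. So there is no in-paper proof to compare against; the fairest assessment is against the machinery the paper does expose (the regularization theorem of \cite{FonsecaMora:Existence}, countably Hilbertian topologies, Hilbert--Schmidt embeddings). Measured that way, your architecture --- regularization under hypotheses (1)--(2) to produce continuous $(\widehat{\Phi}_{\theta})'$-valued versions satisfying (i), a Mitoma-type transfer of tightness from the scalar projections of hypothesis (3) to the path space for (ii), identification of the limit through hypothesis (4) for (iii), and pushforward along the continuous inclusion $(\widehat{\Phi}_{\theta})' \hookrightarrow \Phi'$ for the final assertion --- is the natural strategy and almost certainly the one followed in the cited work.

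However, there is a genuine gap exactly at the step you flag as the main obstacle, and it is the heart of the theorem. As written, your mechanism for joint tightness --- ``sum the coordinatewise moduli of continuity and supremum estimates'' --- does not close. Hypothesis (3) gives, for each basis vector $\psi_{j}$ of $\Phi_{p_{k}}$ and each $\epsilon_{j}>0$, a compact set in $C_{\infty}(\R)$ of probability at least $1-\epsilon_{j}$, but nothing makes the associated sup-bounds and moduli decay in $j$, so the resulting series has no reason to converge, let alone uniformly in $n$. The correct (Mitoma-style) mechanism is a finite/tail splitting: writing $p'_{k}(Y^{n}_{t}-Y^{n}_{s})^{2}=\sum_{j}\abs{\inner{Y^{n}_{t}-Y^{n}_{s}}{\psi_{j}}}^{2}$ for an orthonormal basis $(\psi_{j})$ of $\Phi_{p_{k}}$, one bounds the tail $\sum_{j>N}$ by $\bigl(\sum_{j>N}p_{k-1}(\psi_{j})^{2}\bigr)\cdot 4\sup_{t\leq T}p'_{k-1}(Y^{n}_{t})^{2}$, where the supremum bound is uniform in $n$ with high probability thanks to hypothesis (2) (equicontinuity, via the regularization estimates) --- \emph{not} hypothesis (3) --- and the Hilbert--Schmidt summability $\sum_{j}p_{k-1}(\psi_{j})^{2}<\infty$ makes the prefactor small for $N$ large; only the finitely many remaining coordinates $j\leq N$ are then handled by the scalar tightness of hypothesis (3). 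Your sketch names all three ingredients but assembles them so that scalar tightness is asked to do the tail's work, which it cannot. Two smaller points: Prokhorov's theorem and the claim that finite-dimensional distributions determine the limit law both need their Radon-measure versions, since $C_{\infty}((\widehat{\Phi}_{\theta})')$ is not Polish; neither is automatic and both deserve explicit justification.
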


\section{Sufficient conditions for UCP convergence}\label{sectUCP}

\subsection{Main Result}\label{subSectSuffiCondiUCPConvergence}

Under the assumption that $\Phi$ is a nuclear space, we provide in this section sufficient conditions for UCP convergence of a sequence of  $\Phi'$-valued processes with continuous paths. Our main result is formulated in the more general  context of cylindrical processes.

\begin{theorem}\label{theoUCPConver}
Let $\Phi$ be a nuclear space and let $(X^{n}: n \in \N)$, with $X^{n} =(X^{n}_{t}: t \geq 0)$, be a sequence of cylindrical process in $\Phi'$ satisfying:
\begin{enumerate}
\item \label{hypothesisContiTestFunct} For every $n \in \N$ and $\phi \in \Phi$, the real-valued process $X^{n}(\phi)=( X^{n}_{t}(\phi): t \geq 0)$ is continuous.
\item For every $n \in \N$ and $T > 0$, the family $( X^{n}_{t}: t \in [0,T] )$ of linear maps from $\Phi$ into $L^{0} \ProbSpace$ is equicontinuous.  
\item \label{hypothesisWeakUCP} For every $\phi \in \Phi$, the sequence $X^{n}(\phi)$ converges uniformly on compacts in probability.
\end{enumerate}
Then, there exist a weaker countably Hilbertian topology $\vartheta$ on $\Phi$ and some  $(\widehat{\Phi}_{\vartheta})'$-valued continuous processes  $Y= (Y_{t}: t \geq 0)$ and  $Y^{n}= (Y^{n}_{t}: t \geq 0)$, $n \in \N$,  such that 
\begin{enumerate}[label=(\roman*)]
\item \label{mainTheoConclu1} For every $\phi \in \Phi$ and $n \in \N$, the real-valued processes $\inner{Y^{n}}{\phi}$ and $X^{n}(\phi)$ are indistinguishable.
\item \label{mainTheoConclu2} $Y^{n} \overset{ucp}{\rightarrow} Y$  as  $(\widehat{\Phi}_{\vartheta})'$-valued processes. 
\end{enumerate}
Moreover, as  $\Phi'$-valued processes $Y$  and $Y^{n}$, for $n \in \N$, are continuous processes and $Y^{n} \overset{ucp}{\rightarrow} Y$.  
\end{theorem}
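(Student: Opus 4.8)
The plan is to reduce the statement to the weak-convergence result Theorem~\ref{theoWeakConvergCylinProcesInCTInfy} and then to upgrade weak convergence to UCP convergence by applying Lemma~\ref{lemmaWeakImpliesUCP} to differences. The first move is to produce the limiting object: for each $\phi\in\Phi$, hypothesis~\ref{hypothesisWeakUCP} says that $X^{n}(\phi)$ converges in the UCP topology of $C_{\infty}(\R)$, so there is a continuous real-valued process $X(\phi)$ with $X^{n}(\phi)\overset{ucp}{\rightarrow}X(\phi)$, and linearity of each $X^{n}$ together with continuity of limits shows that $\phi\mapsto X(\phi)$ is again a cylindrical process with continuous paths. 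I would then feed Theorem~\ref{theoWeakConvergCylinProcesInCTInfy} not with $(X^{n})$ but with the interlaced sequence $W^{2k-1}=X^{k}$, $W^{2k}=X$; the purpose of inserting the limit in every even slot is to force the weak limit produced by that theorem to be realized, on the common probability space, by a single $(\widehat{\Phi}_{\vartheta})'$-valued process $Y$ whose projections $\inner{Y}{\phi}$ are indistinguishable from $X(\phi)$.

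The core of the argument is the verification of the hypotheses of Theorem~\ref{theoWeakConvergCylinProcesInCTInfy} for $(W^{n})$. Hypothesis (1) holds because $X(\phi)$, a UCP limit of continuous processes, has continuous paths. Hypotheses (3) and (4) are cheap consequences of~\ref{hypothesisWeakUCP}: every term of $W^{n}(\phi)$ converges in UCP to $X(\phi)$, so the laws of $W^{n}(\phi)$ converge in $C_{\infty}(\R)$ and are thus uniformly tight, while for any finite family $(t_{i},\phi_{i})$ the vector $(W^{n}_{t_{i}}(\phi_{i}))_{i}$ converges in probability, hence in distribution. The genuine obstacle is hypothesis (2): Theorem~\ref{theoWeakConvergCylinProcesInCTInfy} requires $(W^{n}_{t}:t\in[0,T],\,n\in\N)$ to be \emph{jointly} equicontinuous, whereas hypothesis (2) here only furnishes equicontinuity of $(X^{n}_{t}:t\in[0,T])$ for each fixed $n$. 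Promoting per-process equicontinuity to joint equicontinuity is exactly a uniform boundedness phenomenon, and I expect this to be the hardest step. My approach would be to collect, for every $n$ and every $T\in\N$, a continuous Hilbertian seminorm through which $(X^{n}_{t}:t\le T)$ factors, to let $\theta$ be the weaker countably Hilbertian topology generated by this countable family, and to work in the completion $\widehat{\Phi}_{\theta}$, which by the remarks in Section~\ref{sectionPrelim} is Baire and ultrabornological, hence barrelled. On $\widehat{\Phi}_{\theta}$ each $X^{n}_{t}$ extends continuously, and the $L^{0}\ProbSpace$-boundedness coming from the UCP convergence feeds a Banach--Steinhaus argument on the barrelled space $\widehat{\Phi}_{\theta}$ that is expected to deliver the joint equicontinuity, which then transfers back to $\Phi$; the equicontinuity of the limit $X$ needed to close the interlacing is obtained from the same uniform bound.

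Granting the hypotheses, Theorem~\ref{theoWeakConvergCylinProcesInCTInfy} supplies a weaker countably Hilbertian topology $\vartheta$, tight $C_{\infty}((\widehat{\Phi}_{\vartheta})')$-valued realizations of the $W^{n}$, and a weak limit; restricting to the two subsequences I obtain realizations $Y^{n}$ of $X^{n}$ and a single realization $Y$ of $X$, all on $\ProbSpace$, with $\inner{Y^{n}}{\phi}$ (resp.\ $\inner{Y}{\phi}$) indistinguishable from $X^{n}(\phi)$ (resp.\ $X(\phi)$), and with $Y^{n}\Rightarrow Y$ in $C_{\infty}((\widehat{\Phi}_{\vartheta})')$. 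This already gives conclusion~\ref{mainTheoConclu1}. For conclusion~\ref{mainTheoConclu2} I would apply Lemma~\ref{lemmaWeakImpliesUCP} to the differences $Y^{n}-Y$: the family $\{Y^{n}\}$ is tight and $\{Y\}$ is a single law, so $\{Y^{n}-Y\}$ is tight, while $\inner{Y^{n}_{t}-Y_{t}}{\phi}=X^{n}_{t}(\phi)-X_{t}(\phi)\to 0$ in probability for each fixed $t,\phi$ by the UCP convergence $X^{n}(\phi)\overset{ucp}{\rightarrow}X(\phi)$, so all finite-dimensional distributions of $Y^{n}-Y$ converge to those of $0$. Hence $Y^{n}-Y\Rightarrow 0$, and Lemma~\ref{lemmaWeakImpliesUCP} gives $Y^{n}\overset{ucp}{\rightarrow}Y$ in $(\widehat{\Phi}_{\vartheta})'$. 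Finally, the ``moreover'' clause of Theorem~\ref{theoWeakConvergCylinProcesInCTInfy} states that tightness and weak convergence also hold for $Y^{n},Y$ as $C_{\infty}(\Phi')$-valued random variables; repeating verbatim the tightness-plus-vanishing-finite-dimensional-distributions computation in $C_{\infty}(\Phi')$ and invoking Lemma~\ref{lemmaWeakImpliesUCP} there yields $Y^{n}\overset{ucp}{\rightarrow}Y$ as $\Phi'$-valued processes, which is the last assertion.
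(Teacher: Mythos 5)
Your overall architecture --- construct the limit cylindrical process $X$ from hypothesis \ref{hypothesisWeakUCP}, establish joint equicontinuity, invoke Theorem \ref{theoWeakConvergCylinProcesInCTInfy}, and finish with Lemma \ref{lemmaWeakImpliesUCP} --- is the same as the paper's, and your interlacing device $W^{2k-1}=X^{k}$, $W^{2k}=X$ is a workable substitute for the paper's routing (the paper instead regularizes each $X^{n}$ and the limit $X$ separately via the regularization theorem, Theorem 3.2 in \cite{FonsecaMora:Existence}, and applies Theorem \ref{theoWeakConvergCylinProcesInCTInfy} only to the differences $Z^{n}=Y^{n}-Y$). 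However, the step you yourself single out as the crux --- hypothesis (2) of Theorem \ref{theoWeakConvergCylinProcesInCTInfy} --- is where your argument does not close. Banach--Steinhaus on the Baire space $\widehat{\Phi}_{\theta}$ requires the orbit $\left\{ X^{n}_{t}(\psi) : t \in [0,T], \, n \in \N \right\}$ to be bounded in $L^{0}\ProbSpace$ for every $\psi$ in a \emph{non-meager} subset of $\widehat{\Phi}_{\theta}$. What UCP convergence gives you (a convergent sequence in a topological vector space is bounded) is boundedness of the orbits only at points of $\Phi$, which is merely a dense subspace of $\widehat{\Phi}_{\theta}$; a dense proper subspace of a Baire space can perfectly well be meager, so the uniform-boundedness dichotomy cannot be resolved by density alone, and nothing tells you the continuous extensions of the $X^{n}_{t}$ have bounded orbits off $\Phi$. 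The paper goes around exactly this obstruction differently: it first upgrades each $X^{n}$ to a genuine $(\widehat{\Phi}_{\theta_{n}})'$-valued continuous process $Y^{n}$, proves continuity of each $Y^{n}_{t}$ on $\widehat{\Phi}_{\theta}$ by the closed graph theorem, and then invokes Proposition 3.3 of \cite{FonsecaMora:AlmostSure}, a uniform boundedness principle tailored to families of dual-valued random variables over ultrabornological spaces, to obtain the uniform Fourier-transform bound \eqref{eqSequenceYnEquicontFourTransforms}; that same bound, combined with Fatou's lemma, also delivers the equicontinuity of the limit family $(X_{t}: t \in [0,T])$, which you only assert is ``obtained from the same uniform bound.''

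There is a second, smaller gap at the end: you deduce $Y^{n}-Y\Rightarrow 0$ in $C_{\infty}((\widehat{\Phi}_{\vartheta})')$ from tightness plus convergence of finite-dimensional distributions. On this non-metrizable, non-Polish space that inference needs both a Prokhorov-type theorem (tightness implies relative compactness) and the fact that finite-dimensional distributions determine Radon measures there; neither is stated in the paper, and supplying them amounts to re-proving part of Theorem \ref{theoWeakConvergCylinProcesInCTInfy}. The repair is cheap and is exactly what the paper does: apply Theorem \ref{theoWeakConvergCylinProcesInCTInfy} itself to the differences $Z^{n}=Y^{n}-Y$ --- its hypotheses are precisely what you have already verified, since $\inner{Z^{n}}{\phi}\overset{ucp}{\rightarrow}0$ gives tightness of the laws on the Polish space $C_{\infty}(\R)$ by Prokhorov and convergence of the finite-dimensional laws to $\delta_{0}$ --- then identify the resulting versions with $Z^{n}$ via Proposition 2.12 in \cite{FonsecaMora:Existence}, observe the weak limit must be $0$, and conclude with Lemma \ref{lemmaWeakImpliesUCP}. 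Likewise, the ``moreover'' clause does not require repeating any weak-convergence computation in $C_{\infty}(\Phi')$: the canonical inclusion from $(\widehat{\Phi}_{\vartheta})'$ into $\Phi'$ is linear and continuous, so continuity of paths and UCP convergence transfer immediately.
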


\begin{remark}
Assume that in Theorem \ref{theoUCPConver} we have $(X^{n}: n \in \N)$ is a sequence of $\Phi'$-valued regular processes. Then, the conclusion in $\ref{mainTheoConclu1}$ implies that for each $n \in \N$, $Y^{n}$ is a continuous version of $X^{n}$. If instead of \ref{hypothesisContiTestFunct} we assume that each $X^{n}$ has continuous paths, then $X^{n}$ and $Y^{n}$ are indistinguishable processes (see Proposition 2.12 in \cite{FonsecaMora:Existence}). The conclusion in $\ref{mainTheoConclu2}$ therefore shows that $X^{n} \overset{ucp}{\rightarrow} Y$  as  $(\widehat{\Phi}_{\vartheta})'$-valued processes, thus the convergence occurs in a topology finer than that of $\Phi'$. 
\end{remark}

\begin{remark}
An analogue of Theorem \ref{theoUCPConver} for almost sure uniform convergence is proved in \cite{FonsecaMora:AlmostSure} (Theorem 3.1). Neither the result in \cite{FonsecaMora:AlmostSure} nor the arguments used in its proof can be used to show Theorem \ref{theoUCPConver}. This because the argument used in \cite{FonsecaMora:AlmostSure} relies heavily on the fact that for almost every $\omega \in \Omega$ one can show that the convergence uniform on a compact interval of time $[0,T]$ occurs in a Hilbert space $\Phi'_{q}$, where the continuous Hilbertian seminorm $q$ on $\Phi$ depends on $\omega$ and $T>0$.  
\end{remark}

Our proof of Theorem \ref{theoUCPConver} will follow an argument based on establishing weak convergence on $C_{\infty}(\Phi')$ for $Y^{n}-Y$ to $0$ and then applying Lemma \ref{lemmaWeakImpliesUCP}. 
For this argument to work we will need to show that our hypothesis \ref{hypothesisWeakUCP} in Theorem \ref{theoUCPConver} on weak (in the duality sense) ucp convergence implies weak convergence on $C_{\infty}(\Phi')$. This task will be carried out with the help of Theorem \ref{theoWeakConvergCylinProcesInCTInfy}.



\begin{proof}[Proof of Theorem \ref{theoUCPConver}]
We divide the proof if three steps. 

\textbf{Step 1:} \emph{There exists a weaker countably Hilbertian topology $\theta$ and a sequence of $(\widehat{\Phi}_{\theta})'$-valued continuous processes  $(Y^{n}: n \in \N)$ satisfying (i) and such that for each $T>0$ the family $(Y^{n}_{t}: t \in [0,T], n \in \N)$ of linear maps from $\widehat{\Phi}_{\theta}$ into $L^{0}\ProbSpace$ is equicontinuous at zero.}

In effect, for every $n \in \N$ we have by assumptions (1) and (2) and the regularization theorem (Theorem 3.2 in \cite{FonsecaMora:Existence}) that there exist a weaker countably Hilbertian topology $\theta_{n}$ on $\Phi$ and a $(\widehat{\Phi}_{\theta_{n}})'$-valued continuous process $Y^{n}=(Y^{n}_{t}: t \geq 0 )$ such that $\inner{Y^{n}}{\phi}$ and $X^{n}(\phi)$ are indistinguishable processes for every $\phi \in \Phi$. 

Given $t \geq 0$ and $n \in \N$, we must show that the linear mapping $Y^{n}_{t}: \widehat{\Phi}_{\theta_{n}} \rightarrow L^{0}\ProbSpace$ is continuous. Since $\widehat{\Phi}_{\theta_{n}}$ is Baire and pseudo-metrizable, by the closed graph theorem (see Theorems 14.1.1 and 14.3.4 in \cite{NariciBeckenstein}) it is enough to show that $Y^{n}_{t}$ is sequentially closed. Let $(\phi_{k}: k \in \N)$ be a sequence converging to $\phi$ in  $\widehat{\Phi}_{\theta_{n}}$, and assume that there exists $Z_{t} \in L^{0}\ProbSpace$ such that $\abs{\inner{Y^{n}_{t}}{\phi_{k}}-Z_{t}} \rightarrow 0$ in probability as $k \rightarrow \infty$. Hence, there exists a subsequence $(\phi_{k(m)}: m \in \N)$ such that $\lim_{m \rightarrow \infty} \inner{Y^{n}_{t}}{\phi_{k(m)}}=Z_{t}$ $\Prob$-a.e. Since $Y^{n}_{t}$ is a $(\widehat{\Phi}_{\theta_{n}})'$-valued random variable, $\lim_{m \rightarrow \infty} \inner{Y^{n}_{t}}{\phi_{k(m)}}=\inner{Y^{n}_{t}}{\phi}$ $\omega$-wise. Then showing $ \inner{Y^{n}_{t}}{\phi}=Z_{t}$ $\Prob$-a.e. Thus $Y^{n}_{t}$ is  sequentially closed, hence continuous. 

Let $\theta$ denotes the countably Hilbertian topology on $\Phi$ generated by the families of seminorms generating the topologies $\theta_{n}$, $n \in \N$. By definition $\theta$ is weaker than the given topology on $\Phi$ and is finer than each $\theta_{n}$. Thus each $Y^{n}$ is a  
$(\widehat{\Phi}_{\theta})'$-valued continuous process and $Y^{n}_{t}$ is continuous as a linear operator from $\widehat{\Phi}_{\theta}$ into $L^{0}\ProbSpace$.
Moreover, by (3) we have for each $T>0$ and $\phi \in \Phi$ that  $\sup_{n} \sup_{0 \leq t \leq T} \abs{\inner{Y^{n}_{t}}{\phi}}= \sup_{n} \sup_{0 \leq t \leq T} \abs{X^{n}_{t}(\phi)}< \infty$ $\Prob$-a.e. 
Since  $\widehat{\Phi}_{\theta}$ is ultrabornological, by Proposition 3.3 in \cite{FonsecaMora:AlmostSure} for each $T>0$ and $\epsilon>0$ there exists a $\theta$-continuous Hilbertian seminorm $p$ on $\Phi$ such that 
\begin{equation}\label{eqSequenceYnEquicontFourTransforms}
\int_{\Omega} \, \sup_{n \in \N} \sup_{0 \leq t \leq T} \abs{1-e^{i\inner{Y^{n}_{t}}{\phi}}} \, d\Prob \leq \epsilon + 2 p(\phi), \quad \forall \, \phi \in \widehat{\Phi}_{\theta}. 
\end{equation}
By \eqref{eqSequenceYnEquicontFourTransforms} we conclude that for each $T>0$ the family $(Y^{n}_{t}: t \in [0,T], n \in \N)$ of linear maps from $\widehat{\Phi}_{\theta}$ (hence from $\Phi$) into $L^{0}\ProbSpace$ is equicontinuous at zero.

\textbf{Step 2:} \emph{There exists a weaker countably Hilbertian topology $\sigma$ on $\Phi$, finer than $\theta$ in Step 1, for which there exists a $(\widehat{\Phi}_{\sigma})'$-valued continuous process $Y$ such that $X^{n}(\phi) \overset{ucp}{\rightarrow} \inner{Y}{\phi}$  for each $\phi \in \Phi$, and for each $T>0$ the family $(Y_{t}: t \in [0,T])$ of linear maps from $\widehat{\Phi}_{\sigma}$  into $L^{0}\ProbSpace$ is equicontinuous at zero. }

In effect, for every $\phi \in \Phi$, by (1) and (3) there exists $X^{\phi} \in \mathbbm{C}(\R)$ such that $X^{n}(\phi) \overset{ucp}{\rightarrow} X^{\phi}$. Define the mapping 
$X: \Phi \rightarrow \mathbbm{C}(\R)$ by $X(\phi)=X^{\phi}$. By uniqueness of limits we can verify that $X$ is a linear mapping, hence defines a cylindrical process $X=(X_{t}: t \geq 0)$ in $\Phi$.
We are going to verify that for every $T>0$ the family of mappings $(X_{t}: t \in [0,T])$ are equicontinuous on $\widehat{\Phi}_{\theta}$ at the origin.

In effect, given $T>0$ and $\phi \in \Phi$, we have first by Fatou's lemma and the convergence in probability of $X^{n}(\phi)$ to $X(\phi)$ uniformly on $[0,T]$, then by the definition of $Y^{n}$ in Step 1  that  
\begin{eqnarray*}
\int_{\Omega} \,  \sup_{0 \leq t \leq T} \abs{1-e^{i X_{t}(\phi)}} d\Prob
& \leq & \liminf_{n \rightarrow \infty} \int_{\Omega} \,  \sup_{0 \leq t \leq T} \abs{1-e^{i X^{n}_{t}(\phi)}} d\Prob \\
& = & \liminf_{n \rightarrow \infty} \int_{\Omega} \,  \sup_{0 \leq t \leq T} \abs{1-e^{i\inner{Y^{n}_{t}}{\phi}}} \, d\Prob \\
& \leq & \int_{\Omega} \, \sup_{n \in \N} \sup_{0 \leq t \leq T} \abs{1-e^{i\inner{Y^{n}_{t}}{\phi}}} \, d\Prob. 
\end{eqnarray*} 
Thus from \eqref{eqSequenceYnEquicontFourTransforms} we conclude for each $T>0$ the equicontinuity on $\widehat{\Phi}_{\theta}$ (hence on $\Phi$) at the origin of the Fourier transforms of the family $(X_{t}: t \in [0,T])$. By \eqref{eqSequenceYnEquicontFourTransforms} the family  $(X_{t}: t \in [0,T])$ is equicontinuous on $\widehat{\Phi}_{\theta}$ at the origin. Therefore we can apply the regularization theorem (Theorem 3.2 in \cite{FonsecaMora:Existence})  to show the existence of a  weaker countably Hilbertian topology $\sigma$ on $\Phi$, finer than $\theta$ in Step 1, for which there exists a $(\widehat{\Phi}_{\sigma})'$-valued continuous process $Y=(Y_{t}: t \geq 0 )$ such that $\inner{Y}{\phi}$ and $X(\phi)$ are indistinguishable processes for every $\phi \in \Phi$ (thus $X^{n}(\phi) \overset{ucp}{\rightarrow} \inner{Y}{\phi}$). 


Finally, since $\sigma$ is finer than $\theta$, and for every $t \geq 0$ the Fourier transform of $X_{t}$ and $Y_{t}$ coincide, then 
for each $T>0$ the family $(Y_{t}: t \in [0,T], n \in \N)$ of linear maps from $\widehat{\Phi}_{\sigma}$ (hence from $\Phi$) into $L^{0}\ProbSpace$ is equicontinuous at zero. 

\textbf{Step 3:} \emph{There exists a weaker countably Hilbertian topology $\vartheta$, finer than $\sigma$ as given in Step 2,  such that $Y^{n} \overset{ucp}{\rightarrow} Y$  as  $(\widehat{\Phi}_{\vartheta})'$-valued processes.}

First, observe that since the topology $\sigma$ is finer than $\theta$, then the conclusions in Step 1 are valid replacing $\widehat{\Phi}_{\theta}$ with $\widehat{\Phi}_{\sigma}$. This way we can consider each $Y^{n}$ a $(\widehat{\Phi}_{\sigma})'$-valued continuous process and the equicontinuity of the family $(Y^{n}_{t}: t \in [0,T], n \in \N)$ is on $\widehat{\Phi}_{\sigma}$. 

For each $n \in \N$ define $Z^{n} = Y^{n}-Y$. We will check $(Z^{n}:n \in \N)$ satisfies the assumptions in Theorem \ref{theoWeakConvergCylinProcesInCTInfy}. From the discussion in the previous paragraph and Step 2 the assumptions (1) and (2) in Theorem \ref{theoWeakConvergCylinProcesInCTInfy} are satisfied, so we must check assumptions (3) and (4) in Theorem \ref{theoWeakConvergCylinProcesInCTInfy} holds true as well.

In effect, for any given $\phi \in \Phi$, since $X^{n}(\phi) \overset{ucp}{\rightarrow} Y(\phi)$ and the processes $X^{n}(\phi)$ and $\inner{Y^{n}}{\phi}$ are indistinguishable, then $\inner{Z^{n}}{\phi}=\inner{Y^{n}-Y}{\phi} \rightarrow 0$ in probability in $C_{\infty}(\R)$ and hence $\inner{Z^{n}}{\phi} \Rightarrow 0$ in $C_{\infty}(\R)$. Then by Prokhorov theorem the sequence of distributions of $\inner{Z^{n}}{\phi}$ is uniformly tight on $C_{\infty}(\R)$. 

Furthermore, since $\inner{Z^{n}}{\phi} \overset{ucp}{\rightarrow} 0$ for every $\phi \in \Phi$, we have that $\forall$ $m \in \N$, $\phi_{1}, \dots, \phi_{m} \in \Phi$, $t_{1}, \dots, t_{m} \geq 0$, the distribution of the $m$-dimensional random vector $(\inner{Z_{t_{1}}^{n}}{\phi_{1}}, \cdots, \inner{Z_{t_{m}}^{n}}{\phi_{m}})$ converges in distribution to the dirac measure $\delta_{0}$ at $0$ on $\R^{m}$. 

Then by Theorem \ref{theoWeakConvergCylinProcesInCTInfy} there exists a weaker countably Hilbertian topology $\vartheta$ on $\Phi$, which we can choose finer than $\sigma$, and some  $C_{\infty}((\widehat{\Phi}_{\vartheta})')$-valued random variables  $\tilde{Z}$ and  $\tilde{Z}^{n}$, $n \in \N$,  such that 
\begin{enumerate}[label=(\Roman*)]
\item For every $\phi \in \Phi$ and $n \in \N$, the real-valued processes $\inner{\tilde{Z}^{n}}{\phi}$ and $\inner{Z^{n}}{\phi}$ are indistinguishable.
\item $\tilde{Z}^{n} \Rightarrow \tilde{Z}$ in $C_{\infty}((\widehat{\Phi}_{\vartheta})')$.
\end{enumerate} 
By (I) and Proposition 2.12 in \cite{FonsecaMora:Existence} we have $\tilde{Z}^{n}$ and $Z^{n}$ are indistinguishable as $(\widehat{\Phi}_{\vartheta})'$-valued processes, thus they can be identified as random variables in $C_{\infty}((\widehat{\Phi}_{\vartheta})')$. Moreover, since $Z^{n}(\phi) \Rightarrow 0$ in $C_{\infty}(\R)$ for each $\phi \in \Phi$ we have by (II) that  $\tilde{Z}=0$. 
Therefore $Z^{n} \Rightarrow 0$ in $C_{\infty}((\widehat{\Phi}_{\vartheta})')$ and by Lemma \ref{lemmaWeakImpliesUCP} we conclude $Z^{n} \overset{ucp}{\rightarrow} 0$ as $(\widehat{\Phi}_{\vartheta})'$-valued processes. But this last implies that  $Y^{n} \overset{ucp}{\rightarrow} Y$ as $(\widehat{\Phi}_{\vartheta})'$-valued processes. 

From Steps 1-3 we conclude assertions (i) and (ii) in Theorem \ref{theoUCPConver}. The last assertion in Theorem \ref{theoUCPConver} follows since the canonical inclusion from  $(\widehat{\Phi}_{\vartheta})'$ into $\Phi'$ is linear continuous, then as  $\Phi'$-valued processes $Y$  and $Y^{n}$, for $n \in \N$, are continuous processes and $Y^{n} \overset{ucp}{\rightarrow} Y$. 
\end{proof}

\begin{remark}\label{remaUCPCadlagProcesses}
The definition of UCP convergence of a sequence of $\Phi'$-valued processes is equally valid for processes with c\`{a}dl\`{a}g paths. One can check that  Lemma \ref{lemmaWeakImpliesUCP}, Theorem \ref{theoUCPConver} and Theorem \ref{theoWeakConvergCylinProcesInCTInfy} remain valid if we replace the word ``continuous'' by ``c\`{a}dl\`{a}g'',  and if we replace $C_{\infty}(\Phi')$ with the space $D_{\infty}(\Phi')$ of $\Phi'$-valued  c\`{a}dl\`{a}g mappings on $[0,\infty)$ (when equipped with the topology of uniform convergence on compact intervals of time). 
\end{remark}

\subsection{The ultrabornological space setting}\label{subSecUltrobornoUCP}

In this section we show that in Theorem \ref{theoUCPConver} if we
further assume that $\Phi$ is an ultrabornological nuclear, we can show that weak (in the duality sense) UCP convergence implies UCP convergence in $\Phi'$.  

The class of ultrabornological nuclear spaces includes many spaces of functions widely used in analysis. Indeed, it is known (see e.g. \cite{Pietsch, Schaefer, Treves}) that the spaces of test functions $\mathscr{E}_{K} \defeq \mathcal{C}^{\infty}(K)$ ($K$: compact subset of $\R^{d}$), $\mathscr{E}\defeq \mathcal{C}^{\infty}(\R^{d})$, the rapidly decreasing functions $\mathscr{S}(\R^{d})$, and the space of harmonic functions $\mathcal{H}(U)$ ($U$: open subset of $\R^{d}$),  are all  examples of Fr\'{e}chet nuclear spaces. Their (strong) dual spaces $\mathscr{E}'_{K}$, $\mathscr{E}'$, $\mathscr{S}'(\R^{d})$, $\mathcal{H}'(U)$, are also nuclear spaces.
On the other hand, the space of test functions $\mathscr{D}(U) \defeq \mathcal{C}_{c}^{\infty}(U)$ ($U$: open subset of $\R^{d}$), the space of polynomials $\mathcal{P}_{n}$ in $n$-variables, the space of real-valued sequences $\R^{\N}$ (with direct sum topology) are strict inductive limits of Fr\'{e}chet nuclear spaces (hence they are also nuclear). The space of distributions  $\mathscr{D}'(U)$  ($U$: open subset of $\R^{d}$) is also nuclear.   
All the above are examples of (complete) ultrabornological nuclear spaces.


\begin{theorem}\label{theoUCPUltrabornologicalProcesses}
Let $\Phi$ be an ultrabornological nuclear space and let $(X^{n}: n \in \N)$, with $X^{n} =(X^{n}_{t}: t \geq 0)$, be a sequence  of  $\Phi'$-valued continuous processes with Radon probability distributions.  Assume further that for every $\phi \in \Phi$ the sequence $\inner{X^{n}}{\phi}$ converges uniformly on compacts in probability.

Then there exist a weaker countably Hilbertian topology $\vartheta$ on $\Phi$ and a $(\widehat{\Phi}_{\vartheta})'$-valued continuous process  $Y= (Y_{t}: t \geq 0)$  such that 
\begin{enumerate}[label=(\roman*)]
\item For $n \in \N$, $X^{n}$ has an indistinguishable $(\widehat{\Phi}_{\vartheta})'$-valued continuous version.
\item $X^{n} \overset{ucp}{\rightarrow} Y$  as  $(\widehat{\Phi}_{\vartheta})'$-valued processes. 
\end{enumerate}
Moreover, as $\Phi'$-valued processes we have $X^{n} \overset{ucp}{\rightarrow} Y$  and $Y$ has Radon distributions.  
\end{theorem}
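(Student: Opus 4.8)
The plan is to deduce the statement from Theorem \ref{theoUCPConver} by verifying its three hypotheses in the present, more restrictive, situation; the extra assumptions that $\Phi$ be ultrabornological and that each $X^{n}$ be a genuine $\Phi'$-valued continuous process with Radon laws are precisely what should render the equicontinuity hypothesis (2) automatic, and this is the whole content of the specialization. Hypothesis (1) is immediate: if $t \mapsto X^{n}_{t}(\omega) \in \Phi'$ is continuous, then composing with the continuous evaluation $f \mapsto \inner{f}{\phi}$ shows that each $X^{n}(\phi)$ has continuous paths. Hypothesis (3) is exactly the standing assumption on weak (duality-sense) UCP convergence. So everything reduces to establishing hypothesis (2).

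For hypothesis (2) I would argue for each fixed $n$ as follows, mirroring Step 1 of the proof of Theorem \ref{theoUCPConver}. Since $\Phi$ is ultrabornological (hence barrelled) nuclear and each $X^{n}_{t}$ has Radon law, Theorem 2.10 in \cite{FonsecaMora:Existence} gives that $X^{n}$ is regular; placing the continuous process $X^{n}$, up to indistinguishability, in $(\widehat{\Phi}_{\theta_{n}})'$ for a weaker countably Hilbertian topology $\theta_{n}$ whose completion $\widehat{\Phi}_{\theta_{n}}$ is ultrabornological and pseudo-metrizable. On the other hand, continuity of the paths forces $\sup_{0 \le t \le T} \abs{\inner{X^{n}_{t}}{\phi}} < \infty$ $\Prob$-a.e. for every $\phi$ and every $T>0$, because a continuous real function is bounded on the compact interval $[0,T]$. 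Feeding this finiteness into Proposition 3.3 in \cite{FonsecaMora:AlmostSure} — the very tool used in Step 1 of Theorem \ref{theoUCPConver} to convert an a.s. boundedness statement into an equicontinuity bound on Fourier transforms in the ultrabornological setting — yields, for each $T>0$, the equicontinuity at zero of the family $(X^{n}_{t}: t \in [0,T])$ of linear maps from $\widehat{\Phi}_{\theta_{n}}$, hence from $\Phi$, into $L^{0}\ProbSpace$. This is hypothesis (2). I expect this to be the main obstacle: it is the only place where the ultrabornological and Radon hypotheses are genuinely used, and where one must marry regularity, path-continuity and the bornological structure (the circularity of trying to invoke the regularization theorem, which itself presupposes equicontinuity, is avoided by using regularity of the marginals directly).

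With the three hypotheses in hand, I would invoke Theorem \ref{theoUCPConver} to produce a weaker countably Hilbertian topology $\vartheta$ on $\Phi$ and $(\widehat{\Phi}_{\vartheta})'$-valued continuous processes $Y$ and $Y^{n}$ with $\inner{Y^{n}}{\phi}$ indistinguishable from $X^{n}(\phi)$ for every $\phi$, and with $Y^{n} \overset{ucp}{\rightarrow} Y$ as $(\widehat{\Phi}_{\vartheta})'$-valued processes. To obtain conclusion (i), I would identify $Y^{n}$ with $X^{n}$: since $X^{n}$ is regular with continuous paths and agrees cylindrically with $Y^{n}$, Proposition 2.12 in \cite{FonsecaMora:Existence} shows that $X^{n}$ and $Y^{n}$ are indistinguishable as $(\widehat{\Phi}_{\vartheta})'$-valued processes, so $Y^{n}$ is the desired indistinguishable continuous version of $X^{n}$. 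Conclusion (ii) then follows by replacing $Y^{n}$ by $X^{n}$ in $Y^{n} \overset{ucp}{\rightarrow} Y$.

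Finally, for the ``moreover'' clause, the $\Phi'$-valued UCP convergence $X^{n} \overset{ucp}{\rightarrow} Y$ is read off from the last assertion of Theorem \ref{theoUCPConver}: the canonical inclusion $(\widehat{\Phi}_{\vartheta})' \hookrightarrow \Phi'$ is linear and continuous, so UCP convergence in the finer topology descends to $\Phi'$, and one combines this with the indistinguishability of $X^{n}$ and $Y^{n}$. That $Y$ has Radon distributions follows because $Y$ is $(\widehat{\Phi}_{\vartheta})'$-valued, hence regular as a $\Phi'$-valued process, and regularity is equivalent to the law being Radon for the barrelled nuclear space $\Phi$, again by Theorem 2.10 in \cite{FonsecaMora:Existence}.
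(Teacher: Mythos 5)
Your overall plan is the same as the paper's: reduce to Theorem \ref{theoUCPConver} by checking its hypotheses (1)--(3) for the induced cylindrical processes, then identify $Y^{n}$ with $X^{n}$ via Proposition 2.12 of \cite{FonsecaMora:Existence}, and obtain the Radon property of $Y$ from Theorem 2.10 of \cite{FonsecaMora:Existence}; your treatment of hypotheses (1) and (3) and of all the concluding steps is correct. The gap is in your verification of hypothesis (2). You deduce from Theorem 2.10 that $X^{n}$ is regular and then assert that the continuous process $X^{n}$ can be placed, up to indistinguishability, inside $(\widehat{\Phi}_{\theta_{n}})'$ for a \emph{single} weaker countably Hilbertian topology $\theta_{n}$. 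Regularity only gives, for each fixed $t$, a topology $\theta_{n,t}$ (depending on $t$) with $X^{n}_{t} \in (\widehat{\Phi}_{\theta_{n,t}})'$ $\Prob$-a.e.; since $[0,T]$ is uncountable, these topologies cannot simply be merged into one countably Hilbertian topology. Producing a single topology for the whole process is precisely the content of the regularization theorem (Theorem 3.2 in \cite{FonsecaMora:Existence}), whose hypotheses include the equicontinuity of the family $(X^{n}_{t}: t \in [0,T])$ --- the very statement you are trying to prove. So the circularity you flag parenthetically is not in fact avoided, only relocated; indeed, what you assert at this point is, for fixed $n$, essentially conclusion (i) of the theorem itself.

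The step is easily repaired, in either of two ways. The paper's own route avoids \cite{FonsecaMora:AlmostSure} entirely: by Theorem 2.10 in \cite{FonsecaMora:Existence}, each map $X^{n}_{t}: \Phi \rightarrow L^{0}\ProbSpace$ is continuous (Radon law plus barrelledness), and Proposition 3.10 in \cite{FonsecaMora:Existence} then upgrades this, for the continuous process $X^{n}$, to equicontinuity of the family $(X^{n}_{t}: t \in [0,T])$. Alternatively, your idea of combining a.s. path boundedness with Proposition 3.3 of \cite{FonsecaMora:AlmostSure} survives if you simply drop the detour: $\Phi$ itself is ultrabornological, and the $X^{n}_{t}$ are already genuine $\Phi'$-valued random variables, so that proposition can be applied directly over $\Phi$ to the family $(X^{n}_{t}: t \in [0,T])$, for which $\sup_{0 \leq t \leq T} \abs{\inner{X^{n}_{t}}{\phi}}$ is a.s. finite (and measurable) by path continuity; no passage to a countably Hilbertian completion is needed. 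With either fix, hypothesis (2) holds and the rest of your argument goes through as written.
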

\begin{proof}
We will check that assumptions (1)-(3) in Theorem \ref{theoUCPConver} are satisfied for the sequence of induced cylindrical processes. Assumption (3) is part our assumptions. Likewise assumption (1) is immediate since each $X^{n}$ has continuous paths in $\Phi'$. 

To prove assumption (2) observe that because being $\Phi$ ultrabornological it is therefore barrelled, thus for every $n \in N$ and $t \geq 0$ our assumption that $X^{n}_{t}$ has a Radon probability distribution implies that the mapping $X^{n}_{t}: \Phi \rightarrow L^{0} \ProbSpace$ is continuous (see Theorem 2.10 in \cite{FonsecaMora:Existence}). Then for every $n \in \N$ and $T > 0$, 
by Proposition 3.10 in \cite{FonsecaMora:Existence}  the family $( X^{n}_{t}: t \in [0,T] )$ of linear maps from $\Phi$ into $L^{0} \ProbSpace$ is equicontinuous.

By Theorem \ref{theoUCPConver} there exist a weaker countably Hilbertian topology $\vartheta$ on $\Phi$ and  $(\widehat{\Phi}_{\vartheta})'$-valued continuous processes  $Y= (Y_{t}: t \geq 0)$ and $Y^{n}=(Y^{n}_{t}: t \geq 0)$, $n \in N$, such that 
\begin{enumerate}[label=(\Roman*)]
\item For every $\phi \in \Phi$ and $n \in \N$, the real-valued processes $\inner{Y^{n}}{\phi}$ and $\inner{X^{n}}{\phi}$ are indistinguishable.
\item $Y^{n} \overset{ucp}{\rightarrow} Y$  as  $(\widehat{\Phi}_{\vartheta})'$-valued processes. 
\end{enumerate}
To prove (i) in Theorem \ref{theoUCPUltrabornologicalProcesses}, observe that for each $n \in \N$ since $Y^{n}$ and $X^{n}$ are $\Phi'$-valued continuous processes we have by $(I)$ and Proposition 2.12 in \cite{FonsecaMora:Existence} that $X^{n}$ and $Y^{n}$ are indistinguishable. From the above and (II) we conclude (ii) in Theorem \ref{theoUCPUltrabornologicalProcesses}. 
Finally, since $Y$ is a $(\widehat{\Phi}_{\vartheta})'$-valued  process it is regular as a $\Phi'$-valued process. Hence, because $\Phi$ is barrelled $Y$  has  Radon probability distributions by Theorem 2.10 in \cite{FonsecaMora:Existence}. 
\end{proof}

\begin{example}
For each $n \in \N$, let $z^{n}=(z^{n}_{t}: t \geq 0)$ be a $\R^{d}$-valued continuous process. Assume that the sequence $z^{n}$ converges  uniformly on compacts in probability. We will show that there exist  $\mathscr{D}'(\R^{d})$-valued  continuous processes with Radon probability distributions  $Y$ and $Y^{n}$ for $n \in \N$,  such that for every $\phi \in \mathscr{D}(\R^{d})$ and $n \in \N$  the processes $\phi(z^{n})$ and $\inner{Y^{n}}{\phi}$ are indistinguishable, and such that $Y^{n} \overset{ucp}{\rightarrow} Y$ in $\mathscr{D}'(\R^{d})$. 

For each $n \in \N$, define the following cylindrical process in $\mathscr{D}'(\R^{d})$: for each $t \geq 0$, define the map $X^{n}_{t}: \mathscr{D}(\R^{d}) \rightarrow L^{0}\ProbSpace$ by 
$$ X^{n}_{t} (\phi)= \delta_{z^{n}_{t}}(\phi)=\phi(z^{n}_{t}), \quad \forall \, \phi \in \mathscr{D}(\R^{d}), $$
where $\delta_{x}$ denotes the Dirac measure at $x \in \R^{d}$. Each $X^{n}_{t}$ is continuous from $\mathscr{D}(\R^{d})$ into $L^{0}\ProbSpace$ for each $n \in \N$ and $t \geq 0$. Moreover, since each $\phi \in \mathscr{D}(\R^{d})$ is uniformly continuous, then for each $n \in \N$ it is clear that $ X^{n}(\phi)$ has continuous paths and by our assumption we have that $X^{n}(\phi)$ converges  uniformly on compacts in probability.

Since $\mathscr{D}(\R^{d})$ is ultrabornological, by the regularizarion theorem for ultrabornological spaces (Corollary 3.11 in \cite{FonsecaMora:Existence}) each $X^{n}$ has a version $Y^{n}= (Y^{n}_{t}: t \geq 0)$ which is  a $\mathscr{D}'(\R^{d})$-valued  continuous processes with Radon probability distributions. Moreover for every $\phi \in \mathscr{D}(\R^{d})$ the sequence $\inner{Y^{n}}{\phi}$ converges  uniformly on compacts in probability.
 Then by Theorem 
\ref{theoUCPUltrabornologicalProcesses} there exists a $\mathscr{D}'(\R^{d})$-valued  continuous process with Radon probability distributions $Y= (Y_{t}: t \geq 0)$ such that $Y^{n} \overset{ucp}{\rightarrow} Y$ in $\mathscr{D}'(\R^{d})$. 
\end{example}

\subsection{Applications to UCP Convergence of Semimartingales}\label{subSectSemimar}

Let $\Phi$ be an ultrabornological nuclear space.
In this section we provide sufficient conditions for the existence of an UCP limit to a sequence of $\Phi'$-valued semimartingales. Our results will be of great importance in Section \ref{subSectGoodInteg}. We will need the following terminology. 

We denote by $S^{0}$  the linear space (of equivalence classes) of real-valued semimartingales. Recall that the Emery topology on $S^{0}$ is the topology defined by the F-seminorm:
$$d_{em}(z) = \sup\{ d_{ucp}( h \cdot z) : h \in \mathcal{E}_{1}  \},$$
where  $\mathcal{E}_{1}$ is the collection of all the  real-valued predictable processes of the form 
$ \displaystyle{h=a_{0} \mathbbm{1}_{\{0\}} + \sum_{i=1}^{n-1} a_{i} \mathbbm{1}_{(t_{i}, t_{i+1}]}}$,  
for $0 < t_{1} < t_{2} < \dots < t_{n} < \infty$, $a_{i}$ is an $\mathcal{F}_{t_{i}}$-measurable random variable, $\abs{a_{i}} \leq 1$, $i=1, \dots, n-1$, and $ (h \cdot z)_{t}= a_{0} z_{0}+\sum_{i=1}^{n-1} a_{i} \left( z_{t_{i+1} \wedge t}-z_{t_{i} \wedge t} \right)$.  

We always consider $S^{0}$ equipped with Emery's topology which makes it a complete, metrizable, topological vector space. For further details on the Emery topology see e.g. Section 4.9 in \cite{KarandikarRao}.  

A $\Phi'$-valued process $X=(X_{t}: t \geq 0)$ is called a \emph{$\Phi'$-valued semimartingale} if for every $\phi \in \Phi$ the real-valued process $\inner{X}{\phi}$ is a semimartingale. If $X$ has Radon probability distributions, we have by Proposition 3.12 in \cite{FonsecaMora:Semimartingales} that $X$ has a version with c\`{a}dl\`{a}g paths.

\begin{proposition}\label{propConvergenceSemimartingales}
Let $(X^{n}: n \in \N)$, with $X^{n} =(X^{n}_{t}: t \geq 0)$, be a sequence  of  $\Phi'$-valued c\`{a}dl\`{a}g semimartingales with Radon probability distributions. Assume that for every $\phi \in \Phi$ the sequence $\inner{X^{n}}{\phi}$ converges in $S^{0}$. Then there exists a $\Phi'$-valued c\`{a}dl\`{a}g semimartingale $Y =(Y_{t}: t \geq 0)$ with Radon probability distributions such that $X^{n} \overset{ucp}{\rightarrow} Y$.  
\end{proposition}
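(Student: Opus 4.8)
The plan is to bring the statement into the scope of Theorem \ref{theoUCPUltrabornologicalProcesses} (in its c\`adl\`ag form, as licensed by Remark \ref{remaUCPCadlagProcesses}), by first downgrading the hypothesis of convergence in $S^{0}$ to UCP convergence of the scalar projections, and afterwards recovering the semimartingale property of the limit from the completeness of $S^{0}$.

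First I would recall that $S^{0}$ is complete and that its Emery topology is finer than the UCP topology, so that convergence in $S^{0}$ entails UCP convergence of the same sequence. Hence, for each $\phi \in \Phi$, the Cauchy sequence $\inner{X^{n}}{\phi}$ admits an $S^{0}$-limit $Z^{\phi} \in S^{0}$, and in particular $\inner{X^{n}}{\phi} \overset{ucp}{\rightarrow} Z^{\phi}$. With this in hand the c\`adl\`ag analogue of Theorem \ref{theoUCPUltrabornologicalProcesses} applies: its hypotheses are exactly that the $X^{n}$ are $\Phi'$-valued c\`adl\`ag processes with Radon distributions (given) and that each $\inner{X^{n}}{\phi}$ converges in UCP (just established). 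I would therefore extract a weaker countably Hilbertian topology $\vartheta$ on $\Phi$ and a $(\widehat{\Phi}_{\vartheta})'$-valued c\`adl\`ag process $Y$ with Radon distributions such that $X^{n} \overset{ucp}{\rightarrow} Y$ both as $(\widehat{\Phi}_{\vartheta})'$-valued and as $\Phi'$-valued processes, and such that $\inner{Y^{n}}{\phi}$ is indistinguishable from $\inner{X^{n}}{\phi}$ for the associated versions $Y^{n}$.

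It then remains to verify that $Y$ is a $\Phi'$-valued semimartingale, i.e.\ that $\inner{Y}{\phi}$ is a real-valued semimartingale for every $\phi$. By the c\`adl\`ag version of Proposition \ref{propUCPFiniteDimensionProjections}, UCP convergence $X^{n} \overset{ucp}{\rightarrow} Y$ in $\Phi'$ forces $\inner{X^{n}}{\phi} \overset{ucp}{\rightarrow} \inner{Y}{\phi}$; since UCP limits of c\`adl\`ag processes are unique up to indistinguishability, $\inner{Y}{\phi}$ is indistinguishable from $Z^{\phi}$, which lies in $S^{0}$. Thus $\inner{Y}{\phi}$ is a semimartingale for each $\phi$, so $Y$ is a $\Phi'$-valued c\`adl\`ag semimartingale with Radon distributions, and $X^{n} \overset{ucp}{\rightarrow} Y$, as claimed. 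The only points requiring genuine care are the transfer of Theorem \ref{theoUCPUltrabornologicalProcesses} to the c\`adl\`ag setting (covered by Remark \ref{remaUCPCadlagProcesses}) and the identification of the abstract UCP limit $Y$ with the $S^{0}$-limits $Z^{\phi}$; the latter is the main obstacle, but it is dissolved by the completeness of $S^{0}$ together with uniqueness of UCP limits, so no new analytic estimate is needed beyond what the preceding results already supply.
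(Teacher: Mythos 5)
Your proposal is correct and follows essentially the same route as the paper's proof: downgrade Emery convergence to UCP convergence of the projections, invoke Theorem \ref{theoUCPUltrabornologicalProcesses} in its c\`adl\`ag form to produce the limit $Y$, and then identify $\inner{Y}{\phi}$ with the $S^{0}$-limit via Proposition \ref{propUCPFiniteDimensionProjections} and uniqueness of UCP limits to conclude that $Y$ is a semimartingale. The only cosmetic difference is your appeal to completeness of $S^{0}$, which is unnecessary since the hypothesis already asserts convergence (not merely Cauchyness) in $S^{0}$.
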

\begin{proof}
First, since convergence in Emery's topology is stronger than UCP convergence, then by our hypothesis and Theorem \ref{theoUCPUltrabornologicalProcesses} there exists a $\Phi'$-valued c\`{a}dl\`{a}g process $Y =(Y_{t}: t \geq 0)$ with Radon probability distributions such that $X^{n} \overset{ucp}{\rightarrow} Y$. 

We must check that $Y$ is a $\Phi'$-valued semimartingale. To do this, let $\phi \in \Phi$. Observe that by Proposition \ref{propUCPFiniteDimensionProjections} we have $\inner{X^{n}}{\phi} \overset{ucp}{\rightarrow} \inner{Y}{\phi}$. On the other hand, by hypothesis there exists $z_{\phi} \in S^{0}$ such that $\inner{X^{n}}{\phi} \rightarrow  z_{\phi}$ in $S^{0}$, hence $\inner{X^{n}}{\phi} \overset{ucp}{\rightarrow} z_{\phi}$. By uniqueness of limits in UCP we conclude  that  $\inner{Y}{\phi}$ and $z_{\phi}$ are indistinguishable, therefore $\inner{Y}{\phi} \in S^{0}$. Then $Y$ is a $\Phi'$-valued semimartingale.
\end{proof}

Denote by $\mathcal{M}_{loc}^{c}$ the space of all the real-valued continuous local martingales equipped with the UCP topology.  It is known that $(\mathcal{M}_{loc}^{c}, d_{ucp})$ is a complete, metrizable, topological vector space. 
A $\Phi'$-valued process $X=(X_{t}: t \geq 0)$ is called a \emph{continuous local martingale} if $\inner{X}{\phi} \in \mathcal{M}_{loc}^{c}$ for every $\phi \in \Phi$. If $X$ has Radon probability distributions, we have by Proposition 3.12 in \cite{FonsecaMora:Semimartingales} that $X$ has a version with continuous paths.

\begin{proposition}\label{propConvergenceLocalmartingales}
Let $(X^{n}: n \in \N)$, with $X^{n} =(X^{n}_{t}: t \geq 0)$, be a sequence  of  $\Phi'$-valued continuous local martingales with continuous paths in $\Phi'$ and with Radon probability distributions. Assume that for every $\phi \in \Phi$ the sequence $\inner{X^{n}}{\phi}$ converges uniformly on compacts in probability. Then there exists a $\Phi'$-valued continuous local martingale $Y =(Y_{t}: t \geq 0)$ with  continuous paths in $\Phi'$ and Radon probability distributions such that $X^{n} \overset{ucp}{\rightarrow} Y$.  
\end{proposition}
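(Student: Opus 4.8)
The plan is to follow exactly the template used in the proof of Proposition \ref{propConvergenceSemimartingales}, replacing the role played there by the space $S^{0}$ of semimartingales (with Emery's topology) with the space $\mathcal{M}_{loc}^{c}$ of continuous local martingales (with the UCP topology). The argument splits naturally into two parts: first I would produce a candidate limit process $Y$ via Theorem \ref{theoUCPUltrabornologicalProcesses}, and then verify that this $Y$ is in fact a $\Phi'$-valued continuous local martingale. The path and regularity properties of $Y$ come for free from the first part; the only thing genuinely to be checked is the local-martingale property of the coordinate processes.

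For the first part, I would verify that the hypotheses of Theorem \ref{theoUCPUltrabornologicalProcesses} are satisfied by $(X^{n}: n \in \N)$. Since $\Phi$ is ultrabornological nuclear, each $X^{n}$ is a $\Phi'$-valued continuous process with Radon probability distributions, and by assumption $\inner{X^{n}}{\phi}$ converges uniformly on compacts in probability for every $\phi \in \Phi$, all the required hypotheses hold directly. Theorem \ref{theoUCPUltrabornologicalProcesses} then furnishes a weaker countably Hilbertian topology $\vartheta$ on $\Phi$ and a $(\widehat{\Phi}_{\vartheta})'$-valued continuous process $Y$ such that $X^{n} \overset{ucp}{\rightarrow} Y$ as $\Phi'$-valued processes, with $Y$ having continuous paths and Radon distributions. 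This already secures the UCP convergence and all the stated path and regularity properties of $Y$.

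For the second part, I would fix $\phi \in \Phi$. By Proposition \ref{propUCPFiniteDimensionProjections}, the convergence $X^{n} \overset{ucp}{\rightarrow} Y$ forces $\inner{X^{n}}{\phi} \overset{ucp}{\rightarrow} \inner{Y}{\phi}$. Now each $\inner{X^{n}}{\phi}$ lies in $\mathcal{M}_{loc}^{c}$, and since the sequence converges in UCP it is UCP-Cauchy; invoking the completeness of $(\mathcal{M}_{loc}^{c}, d_{ucp})$ recalled just before the statement, there exists $z_{\phi} \in \mathcal{M}_{loc}^{c}$ with $\inner{X^{n}}{\phi} \overset{ucp}{\rightarrow} z_{\phi}$. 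By uniqueness of UCP limits, $\inner{Y}{\phi}$ and $z_{\phi}$ are indistinguishable, so $\inner{Y}{\phi} \in \mathcal{M}_{loc}^{c}$. As $\phi$ was arbitrary, $Y$ is a $\Phi'$-valued continuous local martingale, which completes the argument.

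The only genuinely delicate point, and it is identical to the one arising in the semimartingale case, is ensuring that the limit of the coordinate processes remains inside the class $\mathcal{M}_{loc}^{c}$. This is precisely where the completeness of $(\mathcal{M}_{loc}^{c}, d_{ucp})$ is indispensable: UCP convergence of $(X^{n})$ by itself guarantees only that $\inner{Y}{\phi}$ is some continuous real-valued process, and without the completeness of the target class one could not conclude that this limit is still a continuous local martingale. Everything else is a routine transcription of the proof of Proposition \ref{propConvergenceSemimartingales}.
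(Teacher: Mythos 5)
Your proposal is correct and follows essentially the same route as the paper: apply Theorem \ref{theoUCPUltrabornologicalProcesses} to obtain the limit process $Y$ with continuous paths and Radon distributions, then use Proposition \ref{propUCPFiniteDimensionProjections} together with the completeness of $(\mathcal{M}_{loc}^{c}, d_{ucp})$ and uniqueness of UCP limits to conclude $\inner{Y}{\phi} \in \mathcal{M}_{loc}^{c}$ for every $\phi \in \Phi$. Your write-up merely makes explicit the Cauchy/uniqueness-of-limits step that the paper compresses into the single sentence ``Since $\mathcal{M}_{loc}^{c}$ is complete, we have $\inner{Y}{\phi} \in \mathcal{M}_{loc}^{c}$.''
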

\begin{proof}
By Theorem \ref{theoUCPUltrabornologicalProcesses} there exists a $\Phi'$-valued continuous process $Y =(Y_{t}: t \geq 0)$ with Radon probability distributions such that $X^{n} \overset{ucp}{\rightarrow} Y$. 
Now by Proposition \ref{propUCPFiniteDimensionProjections} we have $\inner{X^{n}}{\phi} \overset{ucp}{\rightarrow} \inner{Y}{\phi}$. Since $\mathcal{M}_{loc}^{c}$ is complete, we have $\inner{Y}{\phi} \in \mathcal{M}_{loc}^{c}$. Thus $Y$ is a $\Phi'$-valued continuous local martingale.  
\end{proof}

\begin{example}
Let $m^{n}$, $n = 0,1, \cdots$, be real-valued continuous local martingales. To each $m^{n}$ we can associate a $\mathscr{S}'(\R)$-valued continuous local martingale  by means of stochastic integration as follows. 

Given $n \in  \N$, define $X^{n}: \mathscr{S}(\R) \rightarrow \mathcal{M}^{c}_{loc}$ by 
$$X^{n}_{t}(\phi)=\int_{0}^{t} \, \phi \, dm^{n}, \quad \forall \phi \in \mathscr{S}(\R),  t \geq 0,$$
where $\int_{0}^{\cdot} \, \phi \, dm^{n}$ is the stochastic integral of $\phi$ with respect to $m^{n}$.

Each $X^{n}$ is linear by the linearity of the stochastic integral. Moreover, $X^{n}$ is a  continuous operator. To see this, if we take $\phi_{k} \rightarrow \phi$ in $\mathscr{S}(\R)$ then $\phi_{k} \rightarrow \phi$ uniformly on $[0,\infty)$, hence by the stochastic dominated convergence theorem (see e.g. Theorem 4.50 in \cite{KarandikarRao}) we have $X^{n}(\phi_{k}) \overset{ucp}{\rightarrow}  X^{n}(\phi)$. This shows the continuity of $X^{n}$ from $\mathscr{S}(\R)$ into $\mathcal{M}^{c}_{loc}$. In particular, for each $t \geq 0$ the mapping $X^{n}_{t}: \mathscr{S}(\R) \rightarrow L^{0} \ProbSpace$ is linear continuous. As the space $\mathscr{S}(\R)$ is ultrabornological  by the regularizarion theorem for ultrabornological spaces (Corollary 3.11 in \cite{FonsecaMora:Existence}) each $X^{n}$ has a version $Y^{n}= (Y^{n}_{t}: t \geq 0)$ which is  a $\mathscr{S}'(\R)$-valued   continuous local martingale with continuous paths in $\mathscr{S}'(\R)$ and with Radon distributions.

Assume that $m^{n} \overset{ucp}{\rightarrow}  m^{0}$. Since the UCP topology on $ \mathcal{M}^{c}_{loc}$ coincides with the semimartingale topology (see e.g. Th\'{e}or\`{e}me IV.5 in \cite{Memin:1980}), for every $\phi \in \Phi$ we have by the continuity of the stochastic integral on the integrators  
(see e.g. Theorem 4.109 in \cite{KarandikarRao}) that $X^{n}(\phi) \overset{ucp}{\rightarrow}  X^{0}(\phi)$. Therefore $\inner{Y^{n}}{\phi} \overset{ucp}{\rightarrow}  \inner{Y^{0}}{\phi}$.  
Then by Proposition  \ref{propConvergenceLocalmartingales} there exists a $\mathscr{S}'(\R)$-valued  continuous local martingale with continuous paths in $\mathscr{S}'(\R)$ and with Radon probability distributions $Y= (Y_{t}: t \geq 0)$, such that $Y^{n} \overset{ucp}{\rightarrow} Y$ in $\mathscr{S}'(\R)$.
\end{example}

\section{Topology on the space of good integrators and continuity of the stochastic integral}\label{subSectGoodInteg}

In Sections 4 and 5 in \cite{FonsecaMora:StochInteg} it is developed a theory of stochastic integration with respect to  semimartingales taking values in the dual of a nuclear space $\Phi$. For a given semimartingale $X$, the stochastic integral mapping $H \mapsto \int \, H \, dX$ is not always continuous into the space $S^{0}$. Whenever the stochastic integral mapping is continuous, $X$ is called a good integrator (a precise definition will be given below). The main objective of this section is to introduce a new topology for the space of all good integrators under the assumption that the space $\Phi$ is either a nuclear Fr\'{e}chet space or the strict inductive limit of nuclear Fr\'{e}chet spaces. We will show that the constructed topology is complete and that the corresponding stochastic integral mapping is continuous on the integrators under this topology; our main tool will be the sufficient conditions for UCP convergence given in Section \ref{sectUCP}.  

\subsection{Real-valued stochastic integration}\label{subSecRealStochInteg}

In this section we review the construction and properties of the stochastic integral in \cite{FonsecaMora:StochInteg}. 
We denote by $(S^{0})_{lcx}$  the convexification of $S^{0}$, i.e. the linear space $S^{0}$ equipped with the strongest locally convex topology on $S^{0}$ that is weaker than the Emery topology. Since the Emery topology is not locally convex, the convexified topology on $S^{0}$ is strictly weaker than the Emery topology. 

Denote by $b\mathcal{P}$ the Banach space of all the bounded predictable processes $h : \R_{+} \times \Omega \rightarrow \R$   equipped with the uniform norm $\norm{h}_{u}=\sup_{(r,\omega)} \abs{h(r,\omega)}$. If $h \in b\mathcal{P}$ and $z \in S^{0}$, then $h$ is stochastically integrable with respect to $z$, and its stochastic integral, that we denote by $h \cdot z=(( h \cdot z)_{t}: t \geq 0)$, is an element of $S^{0}$ (see \cite{Protter}, Theorem IV.15). The mapping $(z,h) \mapsto h \cdot z$ from $S^{0} \times b\mathcal{P}$ into $S^{0}$ is bilinear (see \cite{Protter}, Theorem IV.16-7) and separately continuous (see Theorems 12.4.10-13 in \cite{CohenElliott}).  

Let  $\Phi$ be a complete barrelled nuclear space. 
We denote by $b\mathcal{P}(\Phi)$ the space of all $\Phi$-valued processes $H=(H_{t}: t \geq 0)$ with the property that $\inner{f}{H}\defeq \{ \inner{f}{H_{t}(\omega)}: t \geq 0, \omega \in \Omega   \} \in b\mathcal{P}$ for every $f \in \Phi'$. The space $b\mathcal{P}(\Phi)$ is complete   when equipped with the topology generated by the seminorms $H \mapsto \sup_{(t,\omega)} p(H_{t}(\omega))$ where $p$ ranges over a generating family of seminorms for the topology on $\Phi$ (see Section 4.2 in \cite{FonsecaMora:StochInteg}). Recall that a $\Phi$-valued process is called \emph{elementary} if it takes the form  
\begin{equation}\label{eqElemenProcess}
H_{t}(\omega)=\sum_{k=1}^{m} h_{k}(t,\omega) \phi_{k},
\end{equation}
where for $k=1, \cdots, m$ we have $h_{k} \in b\mathcal{P}$ and $\phi_{k} \in \Phi$. By Corollary 4.9 in \cite{FonsecaMora:StochInteg} the collection of all the $\Phi$-valued elementary process is dense in $b\mathcal{P}(\Phi)$.  

We denote by $S^{0}(\Phi')$ the collection of all the $\Phi'$-valued adapted, regular, c\`{a}dl\`{a}g semimartingales.
Let $X=(X_{t}: t \geq 0)$ be a $\Phi'$-valued adapted semimartingale for which the mapping $\phi \mapsto X(\phi)$ is continuous from $\Phi$ into $S^{0}$ (if $\Phi$ is ultrabornological, one can equivalently ask for the probability distribution of each $X_{t}$ to be Radon; see Proposition 3.15 in \cite{FonsecaMora:Semimartingales} and Theorem 2.10 in \cite{FonsecaMora:Existence}). By Theorem 3.7 and Proposition 3.14  in \cite{FonsecaMora:Semimartingales} $X$ has a regular c\`{a}dl\`{a}g version. Hence $X \in S^{0}(\Phi')$. 

Now the stochastic integral with respect to $X \in S^{0}(\Phi')$ is defined as follows: by Theorem 4.10 in \cite{FonsecaMora:StochInteg} for each $H \in b\mathcal{P}(\Phi)$ there exists a real-valued c\`{a}dl\`{a}g adapted semimartingale $\int \, H \, dX$, called the \emph{stochastic integral} of $H$ with respect to $X$, such that:

\begin{enumerate}
\item For every $\Phi$-valued elementary process of the form \eqref{eqElemenProcess} we have 
\begin{equation}\label{eqActionWeakIntegSimpleIntegNuclear}
  \int \, H \, dX =  \sum_{k=1}^{n} \, h_{k} \cdot \inner{X}{\phi_{k}}. 
\end{equation}
\item $\displaystyle{\left(\int H \, dX \right)^{\tau}=\int H \mathbbm{1}_{[0,\tau]} \, dX= \int H \, dX^{\tau}}$, for every stopping time $\tau$.
\item \label{properBilinearity} The mapping $(H,X) \mapsto \int \, H \, dX$ is bilinear. 
\item The mapping $H \mapsto \int \, H \, dX$ is continuous from  $b\mathcal{P}(\Phi)$ into $(S^{0})_{lcx}$. 
\end{enumerate}

The real-valued process $\int \, H \, dX$ is called the \emph{stochastic integral} of $H$ with respect to $X$. Further properties of the stochastic integral can be found in \cite{FonsecaMora:StochInteg}.

\begin{definition}
A $\Phi'$-valued adapted semimartingale $X=(X_{t}: t \geq 0)$ is a \emph{good integrator} if the mapping $\phi \mapsto X(\phi)$ is continuous from $\Phi$ into $S^{0}$ and if the stochastic integral mapping $H \mapsto \int \, H \, dX$  defines a continuous linear mapping from $b\mathcal{P}(\Phi)$ into $S^{0}$. 
We denote by $\mathbbm{S}^{0}(\Phi')$ the collection of all the $\Phi'$-valued semimartingales which are good integrators. 
\end{definition}

If $\Phi=\R$ then $b \mathcal{P}(\R)=b\mathcal{P}$ and by the Bichteler-Dellacherie theorem we have $\mathbbm{S}^{0}(\R)=S^{0}$. However, in the general case for $\Phi$ it is not clear if every $\Phi'$-valued semimartingale is a good integrator in $\Phi'$. There are however many examples of $\Phi'$-valued semimartingales which are good integrators, see for example Proposition 4.12, Corollary 4.13 and Proposition 7.3 in \cite{FonsecaMora:StochInteg}. 

\begin{proposition}
The space $\mathbbm{S}^{0}(\Phi')$ is a linear subspace of  $S^{0}(\Phi')$. 
\end{proposition}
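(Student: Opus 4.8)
The plan is to establish two facts: the inclusion $\mathbbm{S}^{0}(\Phi') \subseteq S^{0}(\Phi')$, and the stability of $\mathbbm{S}^{0}(\Phi')$ under taking linear combinations. Together these say exactly that $\mathbbm{S}^{0}(\Phi')$ is a linear subspace of $S^{0}(\Phi')$.

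For the inclusion, I would note that any $X \in \mathbbm{S}^{0}(\Phi')$ is, by definition, a $\Phi'$-valued adapted semimartingale for which $\phi \mapsto X(\phi)$ is continuous from $\Phi$ into $S^{0}$. This is precisely the hypothesis under which the discussion preceding the definition (resting on Theorem 3.7 and Proposition 3.14 in \cite{FonsecaMora:Semimartingales}) produces a regular c\`{a}dl\`{a}g version of $X$; identifying $X$ with this version places it in $S^{0}(\Phi')$.

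For stability, fix $X, Y \in \mathbbm{S}^{0}(\Phi')$ and $\alpha, \beta \in \R$, and put $W = \alpha X + \beta Y$. The process $W$ is adapted, and for each $\phi \in \Phi$ we have $\inner{W}{\phi} = \alpha \inner{X}{\phi} + \beta \inner{Y}{\phi} \in S^{0}$, so $W$ is a $\Phi'$-valued semimartingale. Moreover $\phi \mapsto W(\phi) = \alpha X(\phi) + \beta Y(\phi)$ is continuous from $\Phi$ into $S^{0}$, because $X(\cdot)$ and $Y(\cdot)$ are continuous and $S^{0}$ is a topological vector space. Invoking once more the regularization result cited above, $W$ admits a regular c\`{a}dl\`{a}g version and hence $W \in S^{0}(\Phi')$; in particular the stochastic integral $\int H \, dW$ is defined for every $H \in b\mathcal{P}(\Phi)$.

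It remains to verify that $W$ is a good integrator, i.e.\ that $H \mapsto \int H \, dW$ is continuous from $b\mathcal{P}(\Phi)$ into $S^{0}$ equipped with the Emery topology. By the bilinearity of the stochastic integral (property (3) in Section \ref{subSecRealStochInteg}) we have $\int H \, dW = \alpha \int H \, dX + \beta \int H \, dY$ for all $H \in b\mathcal{P}(\Phi)$. Since $X$ and $Y$ are good integrators, both $H \mapsto \int H \, dX$ and $H \mapsto \int H \, dY$ are continuous from $b\mathcal{P}(\Phi)$ into $S^{0}$, and since $S^{0}$ is a topological vector space their linear combination $H \mapsto \alpha \int H \, dX + \beta \int H \, dY$ is again continuous into $S^{0}$. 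Hence $W \in \mathbbm{S}^{0}(\Phi')$, completing the argument. The only delicate point is that good integrators are required to be continuous into the Emery topology on $S^{0}$ rather than merely into its convexification $(S^{0})_{lcx}$ from property (4); this presents no difficulty precisely because $S^{0}$ itself is a topological vector space, so continuity into $S^{0}$ is preserved under the linear operations to which bilinearity reduces the claim.
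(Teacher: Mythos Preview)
Your proof is correct and follows essentially the same approach as the paper: both arguments use the bilinearity of the stochastic integral in the integrator (property \ref{properBilinearity}) to write $\int H\,d(\alpha X+\beta Y)=\alpha\int H\,dX+\beta\int H\,dY$ and then invoke that $S^{0}$ is a topological vector space to conclude continuity of $H\mapsto\int H\,dW$. Your version is slightly more explicit in separating out the inclusion $\mathbbm{S}^{0}(\Phi')\subseteq S^{0}(\Phi')$ (which the paper treats as already established in the paragraph preceding the definition of good integrator), but the core reasoning is the same.
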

\begin{proof}
Let $X, Y \in \mathbbm{S}^{0}(\Phi')$ and $c \in \R$. Then $cX+Y$ is a $\Phi'$-valued semimartingale and the mapping $\phi \mapsto (cX+Y)(\phi)$ is continuous from $\Phi$ into $S^{0}$

Now by our assumptions the mapping $H \mapsto \left( c \int \, H \, dX + \int \, H \, dY \right)$ is continuous. Moreover for any given $H \in b\mathcal{P}(\Phi)$ we have by the linearity of the stochastic integral on the integrators (property \ref{properBilinearity} above) that $\int \, H \, d(cX+Y)=c \int \, H \, dX +\int \, H \, dY$. Therefore the mapping $H \mapsto \int \, H \, d(cX+Y)$ is continuous and hence $cX+Y \in \mathbbm{S}^{0}(\Phi')$.
\end{proof}

\subsection{Good integrators when $\Phi$ is nuclear Fr\'{e}chet space}\label{subSectGoodIntegFrechet}

Throughout this section we assume that  $\Phi$ is a nuclear Fr\'{e}chet  space and  $q$ is an $F$-seminorm that generates its topology (recall the examples listed in Section \ref{subSecUltrobornoUCP}). In such a case the space $b\mathcal{P}(\Phi)$ is a Fr\'{e}chet space and its topology can be equivalently defined by the $F$-seminorm $H \mapsto \sup_{(t,\omega)} q(H_{t}(\omega))$.


\begin{definition}\label{defiTopologyGoodIntegrators}
Let $d_{em}$ denotes the $F$-seminorm that generates the Emery topology on $S^{0}$. We define a $F$-seminorm $d_{\mathbbm{S}^{0}}$ on the space $\mathbbm{S}^{0}(\Phi')$ as follows: for a good integrator $X=(X_{t}: t \geq 0)$ in $\Phi'$ let
\begin{equation}\label{eqFSeminormEmery}
d_{\mathbbm{S}^{0}}(X)=\sup \left\{  d_{em} \left( \int \, H \, dX \right): H \in b\mathcal{P}(\Phi), \, \sup_{(t,\omega)} q(H_{t}(\omega)) \leq 1   \right\}. 
\end{equation}
We equip the space $\mathbbm{S}^{0}(\Phi')$ of good integrators in $\Phi'$ with the linear metrizable topology so defined by $d_{\mathbbm{S}^{0}}$. We write $X^{n}  \rightarrow X$ in $\mathbbm{S}^{0}(\Phi')$ when $d_{\mathbbm{S}^{0}}(X^{n} - X) \rightarrow 0$.
\end{definition}


\begin{theorem}\label{theoCompleteSpaceIntegrators}
The space $(\mathbbm{S}^{0}(\Phi'), d_{\mathbbm{S}^{0}})$ is complete. 
\end{theorem}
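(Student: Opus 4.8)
The plan is to show the metrizable topology defined by $d_{\mathbbm{S}^{0}}$ is complete by taking an arbitrary $d_{\mathbbm{S}^{0}}$-Cauchy sequence $(X^{n})$ in $\mathbbm{S}^{0}(\Phi')$ and producing a limit inside $\mathbbm{S}^{0}(\Phi')$. The starting observation is that $d_{\mathbbm{S}^{0}}$ controls the Emery distance of every one-dimensional projection: if $q(\phi)\leq 1$ and $h\in b\mathcal{P}$ with $\abs{h}\leq 1$, then $H=h\phi$ satisfies $\sup_{(t,\omega)}q(H_{t}(\omega))\leq 1$ and $\int H\,dX=h\cdot\inner{X}{\phi}$ by \eqref{eqActionWeakIntegSimpleIntegNuclear}. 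Using that $d_{ucp}(w)\leq d_{em}(w)$ for every $w\in S^{0}$ (approximate the identity integrand by elements of $\mathcal{E}_{1}$) and taking the supremum over $h\in\mathcal{E}_{1}$, one obtains $d_{em}(\inner{X}{\phi})\leq d_{\mathbbm{S}^{0}}(X)$ for every $\phi$ with $q(\phi)\leq1$. Consequently $(\inner{X^{n}}{\phi})_{n}$ is Cauchy in the complete space $S^{0}$ for each $\phi\in\Phi$ (first for $q(\phi)\leq1$, then for all $\phi$ by homogeneity of the pairing and continuity of scalar multiplication in $S^{0}$), so it converges in $S^{0}$ to some $z_{\phi}$.

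Next I would build the candidate limit. Since $\Phi$ is nuclear Fréchet it is ultrabornological, and each $X^{n}\in S^{0}(\Phi')$ is a $\Phi'$-valued càdlàg semimartingale with Radon distributions for which $\inner{X^{n}}{\phi}$ converges in $S^{0}$; hence Proposition \ref{propConvergenceSemimartingales} yields a $\Phi'$-valued càdlàg semimartingale $Y$ with Radon distributions such that $X^{n}\overset{ucp}{\rightarrow}Y$ and $\inner{Y}{\phi}=z_{\phi}$ (indistinguishable) for every $\phi$. In particular $Y\in S^{0}(\Phi')$, so $\int\cdot\,dY$ is defined and continuous from $b\mathcal{P}(\Phi)$ into $(S^{0})_{lcx}$. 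I would also record that $\phi\mapsto\inner{Y}{\phi}$ is continuous from $\Phi$ into $S^{0}$: the estimate above shows $(\phi\mapsto\inner{X^{n}}{\phi})_{n}$ is uniformly Cauchy on $\{q\leq1\}$ in the Emery metric, so its uniform limit $\phi\mapsto\inner{Y}{\phi}$ is continuous there, and linearity extends this to all of $\Phi$.

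The heart of the proof is to upgrade $Y$ to a good integrator. Because $(X^{n})$ is $d_{\mathbbm{S}^{0}}$-Cauchy, the operators $T_{n}(H)=\int H\,dX^{n}$ satisfy $\sup\{d_{em}(T_{n}H-T_{m}H):\sup_{(t,\omega)} q(H_{t}(\omega))\leq1\}=d_{\mathbbm{S}^{0}}(X^{n}-X^{m})\to0$, i.e. $(T_{n})$ is uniformly Cauchy on the unit ball of $b\mathcal{P}(\Phi)$; since $S^{0}$ is complete this defines a continuous linear map $T\colon b\mathcal{P}(\Phi)\to S^{0}$ with $T_{n}\to T$ uniformly on that ball. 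On an elementary process $H=\sum_{k}h_{k}\phi_{k}$ one has $T_{n}(H)=\sum_{k}h_{k}\cdot\inner{X^{n}}{\phi_{k}}$, and since $\inner{X^{n}}{\phi_{k}}\to\inner{Y}{\phi_{k}}$ in $S^{0}$, the separate continuity of $(z,h)\mapsto h\cdot z$ into $S^{0}$ gives $T_{n}(H)\to\sum_{k}h_{k}\cdot\inner{Y}{\phi_{k}}=\int H\,dY$; uniqueness of limits in $S^{0}$ then forces $T(H)=\int H\,dY$ on elementary processes. I would extend this identity to all $H\in b\mathcal{P}(\Phi)$ using the density of elementary processes (Corollary 4.9 in \cite{FonsecaMora:StochInteg}), the continuity of $T$ into $S^{0}$, and the continuity of $\int\cdot\,dY$ into the (Hausdorff) space $(S^{0})_{lcx}$: both maps are continuous into $(S^{0})_{lcx}$ and agree on a dense subspace, hence coincide. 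This shows $H\mapsto\int H\,dY=T(H)$ is continuous from $b\mathcal{P}(\Phi)$ into $S^{0}$, so $Y\in\mathbbm{S}^{0}(\Phi')$.

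Finally, convergence in the topology follows at once: by bilinearity of the integral in the integrator, $d_{\mathbbm{S}^{0}}(X^{n}-Y)=\sup\{d_{em}(T_{n}H-T(H)):\sup_{(t,\omega)} q(H_{t}(\omega))\leq1\}\to0$, so $X^{n}\to Y$ in $\mathbbm{S}^{0}(\Phi')$ and the space is complete. I expect the main obstacle to be the identification $\int\cdot\,dY=T$ on all of $b\mathcal{P}(\Phi)$, that is, showing $Y$ is continuous into the full Emery topology rather than merely into $(S^{0})_{lcx}$; this is precisely where the uniform-on-the-unit-ball convergence coming from $d_{\mathbbm{S}^{0}}$-Cauchyness must be combined with the separate Emery-continuity of the scalar integral and the density of elementary processes. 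A secondary point requiring care is the passage from $q(\phi)\leq1$ to arbitrary $\phi$ and the verification that $(S^{0})_{lcx}$ is Hausdorff, so that the continuous extension defining $\int\cdot\,dY$ is unique.
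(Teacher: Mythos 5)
Your proposal is correct and follows essentially the same route as the paper's proof: the scalar projections $\inner{X^{n}}{\phi}$ are shown to be Emery--Cauchy, Proposition \ref{propConvergenceSemimartingales} supplies the candidate limit $Y$, the integral operators $\int \cdot \, dX^{n}$ are shown to converge to a continuous operator that agrees with $\int \cdot \, dY$ on the dense set of elementary processes, and the uniform estimate on the $q$-unit ball yields $d_{\mathbbm{S}^{0}}(X^{n}-Y) \rightarrow 0$. The only notable variations are that you extract the limit operator $T$ directly from uniform Cauchyness on the unit ball (a neighborhood of zero), where the paper instead uses Banach--Steinhaus plus Corollary 1 of Section 34.3 in \cite{Treves}, and that you make explicit the identification $T=\int \cdot \, dY$ via continuity of both maps into the Hausdorff space $(S^{0})_{lcx}$ --- a step the paper's proof uses but leaves implicit.
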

\begin{proof}
Let $(X^{n}: n \in \N)$, with $X^{n} =(X^{n}_{t}: t \geq 0)$, be a Cauchy sequence in $\mathbbm{S}^{0}(\Phi')$. Let $\phi \in \Phi$ and let $H^{\phi}$ to be defined as $ H^{\phi}_{t}(\omega)= \phi$ if $q(\phi)\leq 1$ and $ H^{\phi}_{t}(\omega)= \phi/ q(\phi)$ if $q(\phi)>1$. 
Observe $H^{\phi} \in b\mathcal{P}(\Phi)$ and  $\sup_{(t,\omega)} q(H^{\phi}_{t}(\omega)) \leq 1  $. 

Moreover, by \eqref{eqActionWeakIntegSimpleIntegNuclear} and \eqref{eqFSeminormEmery} we have 
\begin{eqnarray*}
d_{em}(\inner{X^{n}}{\phi}-\inner{X^{m}}{\phi}) 
& = & C(\phi) \, d_{em} \left( \int \, H^{\phi} dX^{n} - \int \, H^{\phi} dX^{m} \right)  \\
& \leq & C(\phi) \, d_{\mathbbm{S}^{0}}(X^{n}-X^{m}),  
\end{eqnarray*}
where $C(\phi)=1$ if  $q(\phi) \leq 1$ and $C(\phi)=q(\phi)$ if $q(\phi) > 1$. 
Therefore, for every $\phi \in \Phi$ we have $(\inner{X^{n}}{\phi}: n \in \N)$ is a Cauchy sequence in $S^{0}$ and hence converges in $S^{0}$. By Proposition \ref{propConvergenceSemimartingales} there exists a $\Phi'$-valued c\`{a}dl\`{a}g semimartingale $Y =(Y_{t}: t \geq 0)$ with Radon probability distributions such that $X^{n} \overset{ucp}{\rightarrow} Y$. 
We must  show that $Y$ is a good integrator. 

Let $J:b\mathcal{P}(\Phi) \rightarrow S^{0}$ be the stochastic integral mapping $J(H)=\int \, H \, dY$ corresponding to $Y$. To show that $Y$ is a good integrator it is enough to show the existence of a continuous linear operator $I: b\mathcal{P}(\Phi) \rightarrow S^{0}$ that coincide with $J$ on a dense subset of $b\mathcal{P}(\Phi)$.

For every $n \in \N$ let $I^{n}: b\mathcal{P}(\Phi) \rightarrow S^{0}$ be the stochastic integral mapping $I^{n}(H)=\int \, H \, dX_{n}$ corresponding to $X^{n}$. Since each $X^{n}$ is a good integrator we have each $I^{n}$ is linear continuous. To prove the existence of the operator $I$ as described in the above paragraph, we will need to show that that $(I^{n}:n \in \N)$ is an equicontinuous subset of $\mathcal{L}(b\mathcal{P}(\Phi), S^{0})$ and that $I^{n}(H) \rightarrow J(H) $ in $S^{0}$ for every $H$ in a dense subset of  $b\mathcal{P}(\Phi)$.

Now to prove that $(I^{n}:n \in \N)$ is equicontinuous notice that by the Banach-Steinhaus theorem (see Theorem 11.9.5 in \cite{NariciBeckenstein}) it is enough to show that $(I^{n}:n \in \N)$ is pointwise bounded on $b\mathcal{P}(\Phi)$. 
Let $H \in b\mathcal{P}(\Phi)$. There exists $M>0$ such that $\sup_{(t,\omega)} q(H_{t}(\omega)) \leq M$. By \eqref{eqFSeminormEmery}  one has 
\begin{equation}\label{eqCauchyGoodIntegra}
 d_{em}(I^{n}(H)-I^{m}(H)) \leq M d_{\mathbbm{S}^{0}}(X^{n}-X^{m}), \quad \forall \, n,m \in \N. 
\end{equation}
Since $(X^{n}: n \in \N)$ is Cauchy in $\mathbbm{S}^{0}(\Phi')$ we immediately conclude from \eqref{eqCauchyGoodIntegra} that $(I^{n}(H): n \in \N)$ is Cauchy in $S^{0}$, thus $(I^{n}(H): n \in \N)$ is bounded in $S^{0}$. Hence, $(I^{n}:n \in \N)$ is an equicontinuous subset of $\mathcal{L}(b\mathcal{P}(\Phi), S^{0})$.

Our next task will be to prove that $I^{n}(H) \rightarrow J(H) $ in $S^{0}$ for each $\Phi$-valued elementary process. 
Let $\phi \in \Phi$ and $h \in b \mathcal{P}$. Because $X^{n} \overset{ucp}{\rightarrow} Y$ then we have by Lemma \ref{lemmaWeakImpliesUCP} that $\inner{X^{n}}{\phi} \overset{ucp}{\rightarrow} \inner{Y}{\phi}$. 
But since $(\inner{X^{n}}{\phi}: n \in \N)$ is a Cauchy sequence in $S^{0}$ we must have $\inner{X^{n}}{\phi} \rightarrow \inner{Y}{\phi}$ in $S^{0}$ by uniqueness of limits in UCP. Then for any given $h \in b\mathcal{P}$ we have by \eqref{eqActionWeakIntegSimpleIntegNuclear} and the continity of the stochastic integral mapping for real-valued semimartingales (see Theorem 12.4.13 in \cite{CohenElliott}) that 
$$ I^{n}(h\phi)=(h \cdot \inner{X^{n}}{\phi}) \rightarrow (h \cdot \inner{Y}{\phi})=J(h\phi) $$
with convergence in $S^{0}$. Then for every $H$ of the simple form \eqref{eqElemenProcess} we have by linearity that $I^{n}(H) \rightarrow J(H) $ in $S^{0}$. 

From the conclusions of the above paragraphs and because the collection of all  $\Phi$-valued elementary process is dense in $b \mathcal{P}(\Phi)$  we have by Corollary 1 in Section 34.3 in (\cite{Treves}, p.356) that there exists a continuous linear operator $I: b\mathcal{P}(\Phi) \rightarrow S^{0}$ that coincide with $J$ for every $\Phi$-valued elementary process and moreover $I^{n}$ converges to $I$ for the topology of compact convergence in $\mathcal{L}(b\mathcal{P}(\Phi), S^{0})$. Therefore $Y$ is a good integrator and $\int \, H \, dX^{n}$ converges to $\int \, H \, dY$ in $S^{0}$ for every $H \in b\mathcal{P}(\Phi)$. 

Our final task is to show $X^{n} \rightarrow Y$ in $\mathbbm{S}^{0}(\Phi')$. Let $a_{n}=\sup_{k \geq 1} d_{\mathbbm{S}^{0}}(X^{n}-X^{n+k})$.  Then $a_{n} \rightarrow 0$ since $(X^{n}: n \geq 1)$ is Cauchy in $\mathbbm{S}^{0}(\Phi')$. Let $H \in b\mathcal{P}(\Phi)$ for which $\sup_{(t,\omega)} q(H_{t}(\omega)) \leq 1$. By \eqref{eqCauchyGoodIntegra} we have 
$ d_{em}\left( \int \, H \, dX^{n} -\int \, H \, dX^{n+k} \right) \leq  a_{n}$, $\forall \, k \geq 1$. 
Then because $ d_{em}\left( \int \, H \, dX^{m} -\int \, H \, dY \right) \rightarrow 0$ as $m \rightarrow \infty$ we conclude 
$$ d_{em}\left( \int \, H \, dX^{n} -\int \, H \, dY \right) \leq a_{n}, \quad \forall H \in b\mathcal{P}(\Phi), \, \sup_{(t,\omega)} q(H_{t}(\omega)) \leq 1, $$
that is $d_{\mathbbm{S}^{0}}(X^{n}-Y) \leq a_{n}$ with $a_{n} \rightarrow 0$. This finishes the proof of completeness for $(\mathbbm{S}^{0}(\Phi'), d_{\mathbbm{S}^{0}})$.
\end{proof}

As an important consequence of the completeness of the space $\mathbbm{S}^{0}(\Phi')$ we obtain the following useful property of the bilinear mapping defined by the stochastic integration. 

\begin{corollary}\label{coroContiStochIntegralMapping}
The bilinear mapping $(X,H) \mapsto \int \, H \, dX$ defined by the stochastic integral is continuous from $ \mathbbm{S}^{0}(\Phi') \times b\mathcal{P}(\Phi)$ (equipped with the product topology) into $S^{0}$. 
\end{corollary}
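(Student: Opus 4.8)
The plan is to deduce joint continuity from \emph{separate} continuity by invoking the classical theorem that a separately continuous bilinear map on the product of an $F$-space and a metrizable topological vector space is automatically jointly continuous (see e.g. \cite{NariciBeckenstein}); the completeness established in Theorem \ref{theoCompleteSpaceIntegrators} is precisely what supplies the $F$-space hypothesis.

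First I would record the ambient structures. By Definition \ref{defiTopologyGoodIntegrators} the space $\mathbbm{S}^{0}(\Phi')$ carries the linear metrizable topology induced by $d_{\mathbbm{S}^{0}}$, and this is in fact an $F$-norm: if $d_{\mathbbm{S}^{0}}(X)=0$ then, testing \eqref{eqFSeminormEmery} against the processes $H^{\phi}$ used in the proof of Theorem \ref{theoCompleteSpaceIntegrators}, one gets $d_{em}(\inner{X}{\phi})=0$, hence $\inner{X}{\phi}=0$ in $S^{0}$ for every $\phi$, i.e. $X=0$ as an equivalence class. Together with completeness (Theorem \ref{theoCompleteSpaceIntegrators}) this makes $(\mathbbm{S}^{0}(\Phi'),d_{\mathbbm{S}^{0}})$ an $F$-space. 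The second factor $b\mathcal{P}(\Phi)$ is a Fr\'{e}chet space, hence a metrizable topological vector space, and $S^{0}$ is a topological vector space, so all three spaces meet the hypotheses of the bilinear-continuity theorem.

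Next I would verify separate continuity of $(X,H)\mapsto\int H\,dX$. Continuity in $H$ for fixed $X$ is exactly the defining property of a good integrator. For continuity in $X$ with $H$ fixed, I would choose $M\geq 0$ with $\sup_{(t,\omega)}q(H_{t}(\omega))\leq M$; scaling $H$ by $1/M$ (bilinearity, property \ref{properBilinearity}) puts it in the unit ball of \eqref{eqFSeminormEmery}, so using the definition of $d_{\mathbbm{S}^{0}}$ and the subadditivity of $d_{em}$ one obtains a bound of the form $d_{em}\!\left(\int H\,dX\right)\leq C(M)\,d_{\mathbbm{S}^{0}}(X)$, exactly as in \eqref{eqCauchyGoodIntegra}. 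Since $X\mapsto\int H\,dX$ is linear, this estimate at the origin yields continuity everywhere on $\mathbbm{S}^{0}(\Phi')$. Applying the bilinear theorem with $\mathbbm{S}^{0}(\Phi')$ as the $F$-space and $b\mathcal{P}(\Phi)$ as the metrizable factor then gives joint continuity for the product topology.

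The main technical point to watch is the separate-continuity estimate in $X$: because $d_{em}$ is an $F$-seminorm rather than a genuine seminorm it is not positively homogeneous, so the scaling argument produces a constant $C(M)$ (for instance of the form $\lfloor M\rfloor+1$) rather than $M$ itself. This is harmless for continuity, but it must be handled through the subadditivity inequality $d_{em}(kz)\leq k\,d_{em}(z)$ instead of outright homogeneity. The only remaining care is to confirm that the completeness of Theorem \ref{theoCompleteSpaceIntegrators} really promotes $(\mathbbm{S}^{0}(\Phi'),d_{\mathbbm{S}^{0}})$ to a bona fide $F$-space, which the point-separation argument above secures.
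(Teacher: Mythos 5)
Your proposal is correct and follows essentially the same route as the paper: prove separate continuity (in $H$ by the definition of good integrator, in $X$ via the estimate drawn from \eqref{eqFSeminormEmery}) and then upgrade to joint continuity by a Baire-category-type theorem for bilinear maps on complete metrizable spaces — the paper cites Corollary 8 of \cite{Swartz:1984}, while you invoke the classical $F$-space $\times$ metrizable result. Your attention to the non-homogeneity of the $F$-seminorm $d_{em}$ (yielding a constant $C(M)$ rather than $M$ in the bound $d_{em}\left(\int \, H \, dX\right) \leq M\, d_{\mathbbm{S}^{0}}(X)$) and to the point-separation of $d_{\mathbbm{S}^{0}}$ are refinements of details the paper passes over silently, but they do not alter the structure of the argument.
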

\begin{proof}
We first prove that the bilinear mapping is separately continuous. 
If $X \in \mathbbm{S}^{0}(\Phi')$, by the definition of good integrator the mapping $H \mapsto \int \, H \, dX$ is continuous from $b \mathcal{P}(\Phi)$ into $S^{0}$. 

If $H \in b \mathcal{P}(\Phi)$. Then there exists $M>0$ such that $\sup_{(t,\omega)} q(H_{t}(\omega)) \leq M$. By \eqref{eqFSeminormEmery} then one has $ d_{em}(\int \, H \, dX) \leq M d_{\mathbbm{S}^{0}}(X)$ and hence the mapping $X \mapsto \int \, H \, dX$ is continuous from $\mathbbm{S}^{0}(\Phi')$ into $S^{0}$.

Now since $b\mathcal{P}(\Phi)$, $\mathbbm{S}^{0}(\Phi')$ and $S^{0}$ are complete metrizable topological vector spaces and the bilinear mapping defined by the stochastic integral is separately continuous then by Corollary 8 in \cite{Swartz:1984} it is indeed continuous.   
\end{proof}

We can alternatively define on $\mathbbm{S}^{0}(\Phi')$ the (Emery) $F$-seminorm:
\begin{equation}\label{eqAlternativeSeminormGoodIntegra}
\delta_{\mathbbm{S}^{0}}(X)=\sup \left\{  d_{ucp} \left( \int \, H \, dX \right): H \in b\mathcal{P}(\Phi), \, \sup_{(t,\omega)} q(H_{t}(\omega)) \leq 1   \right\}. 
\end{equation}
One can check that $\delta_{\mathbbm{S}^{0}}$ defines a metrizable linear topology on $\mathbbm{S}^{0}(\Phi')$. Moreover, since $d_{ucp}(z) \leq d_{em}(z)$ for all $z \in S^{0}$, then 
$\delta_{\mathbbm{S}^{0}}(X) \leq d_{\mathbbm{S}^{0}}(X)$ 
for all $X \in  \mathbbm{S}^{0}(\Phi')$. Hence the topology defined on $\mathbbm{S}^{0}(\Phi')$ by $\delta_{\mathbbm{S}^{0}}$ is weaker than that defined by $d_{\mathbbm{S}^{0}}$. It is not clear if $(\mathbbm{S}^{0}(\Phi'), \delta_{\mathbbm{S}^{0}})$ is complete. This as for the proof of Theorem \ref{theoCompleteSpaceIntegrators} it was indispensable that $(S^{0},d_{em})$ is complete, however the space $(S^{0},d_{ucp})$ is not.
Nevertheless, as we shall prove below there is a subspace of $\mathbbm{S}^{0}(\Phi')$ that is complete for the topology defined by $\delta_{\mathbbm{S}^{0}}$. 

Let $X=(X_{t}: t \geq 0)$ be a $\Phi'$-valued continuous local martingale with continuous paths in $\Phi'$ and with Radon probability distributions. From Proposition 3.6 and Theorem 4.8 in \cite{FonsecaMora:StochInteg} we have $\int H \ dX \in \mathcal{M}_{loc}^{c}$ for every $H \in b\mathcal{P}(\Phi)$. It is known that $\mathcal{M}_{loc}^{c}$ is a closed subspace of $S^{0}$ and moreover that the induced topology on $\mathcal{M}_{loc}^{c}$ coincides with the UCP topology (see \cite{Memin:1980}, Th\'{e}or\`{e}me IV.5). Therefore, $X$ is a good integrator if and only if the stochastic integral mapping $X \mapsto \int H \ dX $ is continuous from $b\mathcal{P}(\Phi)$ into  $\mathcal{M}_{loc}^{c}$. We denote by $\mathbbm{M}_{loc}^{c}(\Phi')$ the collection of all $\Phi'$-valued continuous local martingales that are good integrators. 

\begin{proposition}\label{propContLocaMartiClosedSubspace}
\begin{enumerate}
\item $(\mathbbm{M}_{loc}^{c}(\Phi'), \delta_{\mathbbm{S}^{0}})$ is complete.  
\item $\mathbbm{M}_{loc}^{c}(\Phi')$ is a closed subspace of $(\mathbbm{S}^{0}(\Phi'), d_{\mathbbm{S}^{0}})$.
\end{enumerate}
\end{proposition}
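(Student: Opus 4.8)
The plan is to derive both parts from the two structural facts about $\mathcal{M}_{loc}^{c}$ recalled just before the statement: that $(\mathcal{M}_{loc}^{c}, d_{ucp})$ is complete, and that $\mathcal{M}_{loc}^{c}$ is a closed subspace of $S^{0}$ on which the UCP and Emery topologies coincide. The point of the remark preceding the statement is exactly that, although completeness of $(\mathbbm{S}^{0}(\Phi'),\delta_{\mathbbm{S}^{0}})$ is unclear because $(S^{0},d_{ucp})$ is not complete, the obstruction disappears once all the relevant real-valued integrals are confined to $\mathcal{M}_{loc}^{c}$. Accordingly, for part (1) I would transcribe the proof of Theorem \ref{theoCompleteSpaceIntegrators}, systematically replacing $S^{0}$ by $\mathcal{M}_{loc}^{c}$, $d_{em}$ by $d_{ucp}$, the seminorm $d_{\mathbbm{S}^{0}}$ by $\delta_{\mathbbm{S}^{0}}$, and Proposition \ref{propConvergenceSemimartingales} by Proposition \ref{propConvergenceLocalmartingales}.

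In detail, for part (1) I would take a $\delta_{\mathbbm{S}^{0}}$-Cauchy sequence $(X^{n})$ in $\mathbbm{M}_{loc}^{c}(\Phi')$. Fixing $\phi$ and testing $\delta_{\mathbbm{S}^{0}}$ against the normalized constant integrand $H^{\phi}$ from the proof of Theorem \ref{theoCompleteSpaceIntegrators} gives $d_{ucp}(\inner{X^{n}}{\phi}-\inner{X^{m}}{\phi}) \leq C(\phi)\,\delta_{\mathbbm{S}^{0}}(X^{n}-X^{m})$, where $C(\phi)=\max(1,q(\phi))$ and I use that $d_{ucp}(\lambda z)\leq \lambda\, d_{ucp}(z)$ for $\lambda\geq 1$. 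Hence $(\inner{X^{n}}{\phi})$ is $d_{ucp}$-Cauchy in $\mathcal{M}_{loc}^{c}$ and, by completeness of the latter, converges UCP. Proposition \ref{propConvergenceLocalmartingales} then produces a $\Phi'$-valued continuous local martingale $Y$, with continuous paths and Radon distributions, such that $X^{n}\overset{ucp}{\rightarrow}Y$; since $\Phi$ is nuclear Fréchet (hence ultrabornological) the Radon property makes $\phi\mapsto Y(\phi)$ continuous into $S^{0}$ (Proposition 3.15 in \cite{FonsecaMora:Semimartingales} and Theorem 2.10 in \cite{FonsecaMora:Existence}), so $Y\in S^{0}(\Phi')$ and $\int H\,dY\in\mathcal{M}_{loc}^{c}$ for all $H$. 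To show $Y$ is a good integrator I would repeat the equicontinuity-plus-density argument verbatim: the maps $I^{n}(H)=\int H\,dX^{n}$ are pointwise $d_{ucp}$-Cauchy, hence bounded, in $\mathcal{M}_{loc}^{c}$ by the analogue of \eqref{eqCauchyGoodIntegra} coming from \eqref{eqAlternativeSeminormGoodIntegra}, so Banach--Steinhaus yields equicontinuity; on elementary integrands $I^{n}(h\phi)=h\cdot\inner{X^{n}}{\phi}\rightarrow h\cdot\inner{Y}{\phi}=J(h\phi)$ in $\mathcal{M}_{loc}^{c}$ (continuity of the real integral in the integrator plus uniqueness of UCP limits); the same Treves density corollary then extends $J$ to a continuous $I$, giving $Y\in\mathbbm{M}_{loc}^{c}(\Phi')$. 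Finally $\delta_{\mathbbm{S}^{0}}(X^{n}-Y)\leq a_{n}:=\sup_{k\geq 1}\delta_{\mathbbm{S}^{0}}(X^{n}-X^{n+k})\rightarrow 0$, exactly as at the end of Theorem \ref{theoCompleteSpaceIntegrators}.

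For part (2) the linearity of $\mathbbm{M}_{loc}^{c}(\Phi')$ is clear since $\mathcal{M}_{loc}^{c}$ is a linear subspace and the pairing is linear, so only closedness is at issue. Let $X\in\mathbbm{S}^{0}(\Phi')$ be a $d_{\mathbbm{S}^{0}}$-limit of a sequence $(X^{n})\subseteq\mathbbm{M}_{loc}^{c}(\Phi')$. Being already in $\mathbbm{S}^{0}(\Phi')$, $X$ is a good integrator, so it remains only to verify $\inner{X}{\phi}\in\mathcal{M}_{loc}^{c}$ for every $\phi$. Testing $d_{\mathbbm{S}^{0}}(X^{n}-X)$ against $H^{\phi}$ gives $d_{em}(\inner{X^{n}}{\phi}-\inner{X}{\phi})\leq C(\phi)\,d_{\mathbbm{S}^{0}}(X^{n}-X)\rightarrow 0$, so $\inner{X^{n}}{\phi}\rightarrow\inner{X}{\phi}$ in $S^{0}$; since each $\inner{X^{n}}{\phi}$ lies in $\mathcal{M}_{loc}^{c}$ and the latter is closed in $S^{0}$, the limit $\inner{X}{\phi}$ lies in $\mathcal{M}_{loc}^{c}$. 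Thus $X$ is a $\Phi'$-valued continuous local martingale and a good integrator, i.e. $X\in\mathbbm{M}_{loc}^{c}(\Phi')$.

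The only genuinely delicate step, and the reason part (1) is proved separately rather than folded into the open question about $(\mathbbm{S}^{0}(\Phi'),\delta_{\mathbbm{S}^{0}})$, is the passage in which a merely $d_{ucp}$-Cauchy family of real integrals must be shown to converge; this is where completeness is indispensable and works here precisely because the integrals live in $\mathcal{M}_{loc}^{c}$, where UCP convergence is complete. Everything surrounding it—the Banach--Steinhaus equicontinuity, the Treves density extension, and the final estimate—is a faithful repetition of Theorem \ref{theoCompleteSpaceIntegrators}, while part (2) reduces to the closedness of $\mathcal{M}_{loc}^{c}$ in $S^{0}$.
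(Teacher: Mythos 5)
Your proposal is correct. For part (1) it follows essentially the paper's own route: the paper's proof of Proposition \ref{propContLocaMartiClosedSubspace}(1) consists precisely of the instruction to rerun the proof of Theorem \ref{theoCompleteSpaceIntegrators} with $d_{\mathbbm{S}^{0}}$, $S^{0}$, $d_{em}$ replaced by $\delta_{\mathbbm{S}^{0}}$, $\mathcal{M}_{loc}^{c}$, $d_{ucp}$, invoking Proposition \ref{propConvergenceLocalmartingales} in place of Proposition \ref{propConvergenceSemimartingales} and leaving the details to the reader; you have supplied exactly those details (the test integrand $H^{\phi}$, completeness of $\mathcal{M}_{loc}^{c}$, Banach--Steinhaus, convergence on elementary integrands, the Treves extension, and the final tail estimate), including the point the paper handles only implicitly, namely that the Radon property of the limit $Y$ makes $\phi \mapsto Y(\phi)$ continuous into $S^{0}$ so that $\int H\, dY$ is defined and lies in $\mathcal{M}_{loc}^{c}$.

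Where you genuinely diverge is part (2). The paper disposes of it in one line: since $\delta_{\mathbbm{S}^{0}} \leq d_{\mathbbm{S}^{0}}$, a sequence in $\mathbbm{M}_{loc}^{c}(\Phi')$ converging in $d_{\mathbbm{S}^{0}}$ is $\delta_{\mathbbm{S}^{0}}$-Cauchy, so by (1) it has a $\delta_{\mathbbm{S}^{0}}$-limit in $\mathbbm{M}_{loc}^{c}(\Phi')$, which by Hausdorffness of the $\delta_{\mathbbm{S}^{0}}$-topology must coincide with the $d_{\mathbbm{S}^{0}}$-limit; thus (2) is a corollary of (1). You instead prove (2) directly: testing $d_{\mathbbm{S}^{0}}(X^{n}-X)$ against $H^{\phi}$ gives $\inner{X^{n}}{\phi} \rightarrow \inner{X}{\phi}$ in $(S^{0},d_{em})$, and closedness of $\mathcal{M}_{loc}^{c}$ in $S^{0}$ yields $\inner{X}{\phi} \in \mathcal{M}_{loc}^{c}$, so the good integrator $X$ is a continuous local martingale. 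Your argument is independent of (1) and shows that closedness needs only the elementary fact that $\mathcal{M}_{loc}^{c}$ is closed in $S^{0}$, not the full completeness result; the paper's deduction is shorter but makes (2) logically dependent on (1) and on the (unverified in detail, though asserted) Hausdorffness of $\delta_{\mathbbm{S}^{0}}$. One cosmetic remark: the scaling inequality $d_{ucp}(\lambda z) \leq \lambda\, d_{ucp}(z)$ for $\lambda \geq 1$ holds for the standard form of $d_{ucp}$, but since $F$-seminorms are not homogeneous in general, the safe statement (and all you need for Cauchyness to transfer) is $d_{ucp}(\lambda z) \leq \lceil \lambda \rceil\, d_{ucp}(z)$; the same caveat applies to the equality sign the paper itself writes in the corresponding display of Theorem \ref{theoCompleteSpaceIntegrators}.
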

\begin{proof}
Notice that (2) is a consequence of (1) since the topology defined on $\mathbbm{S}^{0}(\Phi')$ by $\delta_{\mathbbm{S}^{0}}$ is weaker than that defined by $d_{\mathbbm{S}^{0}}$. 

To prove (1) let $(X^{n}: n \in \N)$, with $X^{n}=(X^{n}_{t}: t \geq 0)$, be a Cauchy sequence in $(\mathbbm{M}_{loc}^{c}(\Phi'), \delta_{\mathbbm{S}^{0}})$. Arguing as in the proof of Theorem \ref{theoCompleteSpaceIntegrators} we can show from the definition of $\delta_{\mathbbm{S}^{0}}$ that $(\inner{X^{n}}{\phi}: n \in \N)$ converges in $\mathcal{M}_{loc}^{c}$ for each $\phi \in \Phi$. By Proposition \ref{propConvergenceLocalmartingales} there exists a $\Phi'$-valued continuous local martingale $Y =(Y_{t}: t \geq 0)$ with continuous paths in $\Phi'$ and with Radon probability distributions, such that $X^{n} \overset{ucp}{\rightarrow} Y$. 

As in the proof of Theorem \ref{theoCompleteSpaceIntegrators} to prove that $Y$ is a good integrator we can show that for the stochastic integral mapping $J:b\mathcal{P}(\Phi) \rightarrow \mathcal{M}_{loc}^{c}$, $J(H)=\int \, H \, dY$,  there exists a continuous linear operator $I: b\mathcal{P}(\Phi) \rightarrow \mathcal{M}_{loc}^{c}$ that coincide with $J$ on a dense subset of $b\mathcal{P}(\Phi)$. The proof of this fact can be carried out following similar arguments to those used in the proof of Theorem \ref{theoCompleteSpaceIntegrators}, by using that for each $X^{n}$ the stochastic integral mapping $I^{n}: b\mathcal{P}(\Phi) \rightarrow \mathcal{M}_{loc}^{c}$ is continuous, and replacing when required $d_{\mathbbm{S}^{0}}$, $S^{0}$ and $d_{em}$ with $\delta_{\mathbbm{S}^{0}}$, $\mathcal{M}_{loc}^{c}$ and $d_{ucp}$ respectively. 
We leave the details to the reader. 
\end{proof}

\subsection{Good integrators when $\Phi$ is a strict inductive limit of nuclear Fr\'{e}chet spaces} \label{subSectGoodIntegInducLimitFrechet}

Throughout  this section we assume that $\Phi$ is the strict inductive limit of nuclear Fr\'{e}chet spaces (recall the examples listed in Section \ref{subSecUltrobornoUCP}). To be more specific, let $(\Phi_{n}:n \in \N)$ be an increasing sequence of vector subspaces of the vector space $\Phi$ such that $\Phi=\bigcup_{n \in \N} \Phi_{n}$. For each $n \in \N$ let $(\Phi_{n}, \tau_{n})$ be a nuclear Fr\'{e}chet space such that the natural embedding $j_{n,n+1}$ of  $\Phi_{n}$ into $\Phi_{n+1}$ is a topological isomorphism, i.e. the topology induced by $\tau_{n+1}$ on $\Phi_{n}$ coincides with $\tau_{n}$. The inductive topology $\tau_{ind}$ on $\Phi$ is the finest locally convex topology for which the all the canonical inclusion mappings $j_{n}: \Phi_{n} \rightarrow \Phi$, $n \in \N$, are continuous. We write $\Phi = \underset{n \in \N}{\mbox{ind}} \, \Phi_{n}$. 
Since each $\Phi_{n}$ is complete, the space $\Phi$ is complete (Theorem 4.6.4 
in \cite{Jarchow}). 


Our first result characterizes the space of integrands as the strict inductive limit of the sequence of Fr\'{e}chet spaces $(b \mathcal{P}(\Phi_{n}):n \in \N)$. 

\begin{proposition}\label{propIntegrandsInductLimit}
$\displaystyle b \mathcal{P}(\Phi) \simeq \underset{n \in \N}{\mbox{ind}} \, b \mathcal{P}(\Phi_{n})$. 
\end{proposition}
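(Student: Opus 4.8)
The plan is to realize $b\mathcal{P}(\Phi)$ as the union $\bigcup_{n} b\mathcal{P}(\Phi_n)$, to endow this union with the strict inductive limit topology of the Fr\'{e}chet spaces $b\mathcal{P}(\Phi_n)$, and then to verify that this inductive topology is exactly the one already carried by $b\mathcal{P}(\Phi)$.

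First I would set up the inductive system. For each $n$ the inclusion $\Phi_n \hookrightarrow \Phi$ induces a map $b\mathcal{P}(\Phi_n) \to b\mathcal{P}(\Phi)$: if $H$ is a $\Phi_n$-valued process with $\inner{g}{H} \in b\mathcal{P}$ for all $g \in \Phi_n'$, then, viewing $H$ as $\Phi$-valued, $\inner{f}{H} = \inner{f|_{\Phi_n}}{H} \in b\mathcal{P}$ for every $f \in \Phi'$, so $H \in b\mathcal{P}(\Phi)$. This map is a topological embedding: since the limit is strict, $\tau_{ind}$ induces $\tau_n$ on $\Phi_n$, so for $H$ valued in $\Phi_n$ the seminorm $\sup_{(t,\omega)} p(H_t(\omega))$ (with $p$ a $\tau_{ind}$-continuous seminorm on $\Phi$) equals $\sup_{(t,\omega)} (p|_{\Phi_n})(H_t(\omega))$, and the restrictions $p|_{\Phi_n}$ run through a generating family for $\tau_n$. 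Its image is closed: if $H^\alpha \to H$ in $b\mathcal{P}(\Phi)$ with each $H^\alpha$ valued in $\Phi_n$, then $H^\alpha_t(\omega) \to H_t(\omega)$ in $\Phi$ for every $(t,\omega)$, and closedness of $\Phi_n$ in $\Phi$ (the limit being strict) forces $H_t(\omega) \in \Phi_n$. Hence $(b\mathcal{P}(\Phi_n))_{n}$ is a strict inductive system of Fr\'{e}chet spaces, and the identity on the union defines a continuous linear injection $\iota$ from $\underset{n}{\mbox{ind}}\, b\mathcal{P}(\Phi_n)$ into $b\mathcal{P}(\Phi)$; in particular the topology of $b\mathcal{P}(\Phi)$ is coarser than the inductive one.

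The conceptual heart is surjectivity of $\iota$, that is, the algebraic identity $b\mathcal{P}(\Phi) = \bigcup_n b\mathcal{P}(\Phi_n)$. Given $H \in b\mathcal{P}(\Phi)$, the defining condition $\inner{f}{H} \in b\mathcal{P}$ for all $f \in \Phi'$ says precisely that the range $R = \{H_t(\omega) : t \geq 0, \omega \in \Omega\}$ is weakly bounded, hence bounded, in $\Phi$. As $\Phi$ is a strict inductive limit of Fr\'{e}chet spaces, the Dieudonn\'{e}--Schwartz boundedness theorem provides an index $N$ with $R \subseteq \Phi_N$ and $R$ bounded in $\Phi_N$. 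Thus $H$ is a $\Phi_N$-valued process, and extending any $g \in \Phi_N'$ to some $f \in \Phi'$ by Hahn--Banach (legitimate since $\Phi_N$ is a closed subspace) gives $\inner{g}{H} = \inner{f}{H} \in b\mathcal{P}$, so $H \in b\mathcal{P}(\Phi_N)$. Hence $\iota$ is a continuous linear bijection.

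It remains to show $\iota$ is a homeomorphism, which I expect to be the main obstacle. By the previous step the two topologies share the same bounded sets: any set bounded for the (coarser) $b\mathcal{P}(\Phi)$-topology has bounded range in $\Phi$, hence lies in a single $b\mathcal{P}(\Phi_N)$ and is bounded there, so it is bounded in the inductive limit. Since $\underset{n}{\mbox{ind}}\, b\mathcal{P}(\Phi_n)$ is bornological, the problem reduces to proving that $b\mathcal{P}(\Phi)$ is itself bornological, equivalently that every inductive-limit neighborhood of zero contains a set $\{H : \sup_{(t,\omega)} p(H_t(\omega)) \leq \delta\}$ for a single $\tau_{ind}$-continuous seminorm $p$ on $\Phi$. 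Concretely, a basic inductive-limit neighborhood is $V = \Gamma\!\left(\bigcup_n W_n\right)$ with $W_n = \{H \in b\mathcal{P}(\Phi_n) : \sup_{(t,\omega)} q_n(H_t(\omega)) \leq \epsilon_n\}$, and I would build $p$ by amalgamating the step seminorms $q_n$ with geometrically decreasing weights, using that each $\Phi_n$ is closed in $\Phi_{n+1}$ with induced topology so that each $q_n$ extends to the next step. This is precisely the delicate point: as the example $\Phi = \R^{(\N)}$ already shows, a naive or linearly weighted amalgamation fails, whereas a geometric weighting does enclose such a $p$-ball inside $V$. Carrying this construction out in the general strict nuclear Fr\'{e}chet setting, and checking the enclosure uniformly over all $H$, is where the real work lies; once $p$ is produced for every $V$ one obtains $\tau_{ind} \leq \tau_{b\mathcal{P}(\Phi)}$, the topologies coincide, $\iota$ is open, and the asserted isomorphism follows.
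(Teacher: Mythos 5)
Your route is genuinely different from the paper's, and its first stages are correct: the inclusions $b\mathcal{P}(\Phi_{n}) \hookrightarrow b\mathcal{P}(\Phi)$ are indeed topological embeddings with closed image (strictness of the limit plus closedness of $\Phi_{n}$ in $\Phi$), the algebraic identity $b\mathcal{P}(\Phi)=\bigcup_{n} b\mathcal{P}(\Phi_{n})$ does follow from Mackey's theorem, the Dieudonn\'{e}--Schwartz theorem and Hahn--Banach, the bounded sets of the two topologies coincide, and the reduction of the topological identification to bornologicity of $b\mathcal{P}(\Phi)$ in its natural topology is logically valid. For contrast, the paper avoids all of this: it quotes Theorem 4.8 in \cite{FonsecaMora:StochInteg}, which gives $b\mathcal{P}(\Psi) \simeq \Psi \widehat{\otimes}_{\pi} b\mathcal{P}$ for $\Psi=\Phi$ and $\Psi=\Phi_{n}$, and Corollary 15.5.4 in \cite{Jarchow}, which says that the completed projective tensor product with the Banach space $b\mathcal{P}$ commutes with the countable inductive limit; the proposition then follows in three lines.

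The genuine gap is that you never carry out the decisive step, and the strategy you sketch for it would fail as described. To put a ball $\left\{ H : \sup_{(t,\omega)} p(H_{t}(\omega)) \leq \delta \right\}$ inside a basic inductive-limit neighborhood $V=\Gamma\left(\bigcup_{n} W_{n}\right)$, your model computation for $\Phi=\R^{(\N)}$ uses much more than geometric weights: it uses the coordinate projections of $\R^{(\N)}$ to split a given $H$ with values in the $N$-th step into a convex combination $H=\sum_{k\leq N}\lambda_{k}H_{k}$ with each $H_{k}$ valued in the $k$-th step and controlled by $q_{k}$; it is this splitting that places $H$ in the convex hull. In a general strict inductive limit of nuclear Fr\'{e}chet spaces no such splitting exists: the steps $\Phi_{n}$ are closed subspaces of $\Phi_{n+1}$ but need not be complemented, so there are no continuous projections $\Phi_{n+1}\rightarrow\Phi_{n}$, and the ingredient you actually invoke --- extending the seminorms $q_{n}$ across the steps --- addresses extension of seminorms, not decomposition of vectors, which is the real obstruction. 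The missing content is exactly what the cited Corollary 15.5.4 in \cite{Jarchow} (a Grothendieck-type commutation theorem) supplies. Without it your argument produces only a continuous linear bijection from the inductive limit onto $b\mathcal{P}(\Phi)$, and one cannot close the loop with an open-mapping or closed-graph theorem, because the property of $b\mathcal{P}(\Phi)$ such a theorem would require (bornological, or ultrabornological) is precisely what remains unproven.
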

\begin{proof}
The proof is mainly an application of properties of tensor products and inductive limits. In effect, by Theorem 4.8 in \cite{FonsecaMora:StochInteg} we have $ b \mathcal{P}(\Phi) \simeq \Phi \widehat{\otimes}_{\pi} b\mathcal{P}$. Since  $\Phi= \underset{n \in \N}{\mbox{ind}} \, \Phi_{n}$, by Corollary 15.5.4 in \cite{Jarchow}, p.334, we have 
$$  \Phi \widehat{\otimes}_{\pi} b\mathcal{P} \simeq \underset{n \in \N}{\mbox{ind}} \,\left( \Phi_{n} \widehat{\otimes}_{\pi} b\mathcal{P} \right).$$ 
Yet another application of Theorem 4.8 in \cite{FonsecaMora:StochInteg} yields $\displaystyle b \mathcal{P}(\Phi) \simeq \underset{n \in \N}{\mbox{ind}} \, b \mathcal{P}(\Phi_{n})$.
\end{proof}

One can easily check that the natural embedding of $b \mathcal{P}(\Phi_{n})$ into  $b \mathcal{P}(\Phi_{n+1})$ corresponds to the mapping $H \mapsto j_{n,n+1}H=(j_{n,n+1}H(t,\omega): t \geq 0, \omega \in \Omega)$. For simplicity we denote this mapping by $j_{n,n+1}: b \mathcal{P}(\Phi_{n}) \rightarrow b \mathcal{P}(\Phi_{n+1})$. 

Our next objective is to introduce a topology on the space $\mathbbm{S}^{0}(\Phi')$. To do this, we study the sequence of complete metrizable topological vector spaces  $(\mathbbm{S}^{0}(\Phi_{n}'):n \in \N)$, each equipped with the topology introduced in Definition \ref{defiTopologyGoodIntegrators}. Observe that for each $n \in \N$, the dual operator $j'_{n,n+1}: \Phi'_{n+1} \rightarrow \Phi'_{n}$ is surjective and continuous. 

\begin{lemma}\label{lemmaGoodIntegraDecreasing}
$\forall n \in \N$, $\mathbbm{S}^{0}(\Phi_{n+1}') \subseteq \mathbbm{S}^{0}(\Phi_{n}')$, and the inclusion mapping corresponds to $X \mapsto j'_{n,n+1} X=(j'_{n,n+1} X_{t}: t \geq 0)$. 
\end{lemma}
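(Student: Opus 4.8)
The plan is to fix $n \in \N$, take a good integrator $X = (X_{t} : t \ge 0) \in \mathbbm{S}^{0}(\Phi_{n+1}')$, and show that its restriction $j'_{n,n+1}X = (j'_{n,n+1}X_{t} : t \ge 0)$ lies in $\mathbbm{S}^{0}(\Phi_{n}')$; the inclusion then corresponds to the map $X \mapsto j'_{n,n+1}X$ as claimed. The bridge between the two spaces is the duality identity
$$ \inner{j'_{n,n+1}X_{t}}{\phi} = \inner{X_{t}}{j_{n,n+1}\phi}, \quad \forall\, \phi \in \Phi_{n},\ t \ge 0, $$
which holds because $j'_{n,n+1}$ is the adjoint of the inclusion $j_{n,n+1}$. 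First I would verify that $j'_{n,n+1}X \in S^{0}(\Phi_{n}')$. It is adapted and c\`adl\`ag since $j'_{n,n+1}:\Phi_{n+1}' \to \Phi_{n}'$ is linear and continuous and $X$ is adapted and c\`adl\`ag; it is a semimartingale since, by the identity above, $\inner{j'_{n,n+1}X}{\phi}=\inner{X}{j_{n,n+1}\phi}$ is a real-valued semimartingale for every $\phi \in \Phi_{n}$; and it is regular because each $X_{t}$ is regular (as $X \in S^{0}(\Phi_{n+1}')$), hence has a Radon law on $\Phi_{n+1}'$ since $\Phi_{n+1}$ is barrelled (Theorem 2.10 in \cite{FonsecaMora:Existence}), so that the pushforward of this law under the continuous map $j'_{n,n+1}$ is Radon on $\Phi_{n}'$, and $\Phi_{n}$ being barrelled this makes $j'_{n,n+1}X_{t}$ regular, again by Theorem 2.10 in \cite{FonsecaMora:Existence}.

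Next I would check the two conditions in the definition of good integrator. Condition (a), continuity of $\phi \mapsto (j'_{n,n+1}X)(\phi)$ from $\Phi_{n}$ into $S^{0}$, is immediate from the identity above: it is the composition of the continuous inclusion $j_{n,n+1}:\Phi_{n} \to \Phi_{n+1}$ with the map $\phi \mapsto X(\phi)$, which is continuous into $S^{0}$ because $X$ is a good integrator. For condition (b), the crucial relation is
$$ \int H \, d(j'_{n,n+1}X) = \int (j_{n,n+1}H)\, dX, \quad H \in b\mathcal{P}(\Phi_{n}). $$
For an elementary process $H=\sum_{k} h_{k}\phi_{k}$ with $\phi_{k}\in\Phi_{n}$ this follows directly by comparing the two sides through \eqref{eqActionWeakIntegSimpleIntegNuclear} and the duality identity, since $j_{n,n+1}H=\sum_{k} h_{k}(j_{n,n+1}\phi_{k})$. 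Both sides are continuous as maps from $b\mathcal{P}(\Phi_{n})$ into $(S^{0})_{lcx}$: the left-hand side by the continuity of the stochastic integral into $(S^{0})_{lcx}$ (property (4) in Section~\ref{subSecRealStochInteg}), and the right-hand side because it is the composition of the continuous embedding $j_{n,n+1}:b\mathcal{P}(\Phi_{n}) \to b\mathcal{P}(\Phi_{n+1})$ (the remark following Proposition~\ref{propIntegrandsInductLimit}) with the $S^{0}$-continuous stochastic integral of the good integrator $X$. Since the elementary processes are dense in $b\mathcal{P}(\Phi_{n})$ (Corollary 4.9 in \cite{FonsecaMora:StochInteg}), I would extend the relation to all $H \in b\mathcal{P}(\Phi_{n})$.

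Granting this relation, the map $H \mapsto \int H\, d(j'_{n,n+1}X)$ coincides with $H \mapsto \int (j_{n,n+1}H)\, dX$, which is continuous from $b\mathcal{P}(\Phi_{n})$ into $S^{0}$ (Emery topology) by the previous paragraph. Hence $j'_{n,n+1}X$ satisfies condition (b), so $j'_{n,n+1}X \in \mathbbm{S}^{0}(\Phi_{n}')$, giving the asserted inclusion with connecting map $X \mapsto j'_{n,n+1}X$.

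I expect the one genuinely delicate point to be the passage from elementary to arbitrary integrands in the displayed integral relation. A priori the left-hand side is only known to be continuous into the convexification $(S^{0})_{lcx}$ and \emph{not} into $S^{0}$ (this is exactly the good-integrator property we are trying to establish), so equating the two continuous extensions on the dense set of elementary processes requires that $(S^{0})_{lcx}$ be Hausdorff, equivalently that the stochastic integral be the unique $(S^{0})_{lcx}$-continuous linear extension of \eqref{eqActionWeakIntegSimpleIntegNuclear} (which is implicit in its construction in \cite{FonsecaMora:StochInteg}). It is precisely this identification that lets the Emery-continuous map $H \mapsto \int (j_{n,n+1}H)\, dX$ be recognized as $\int \cdot\, d(j'_{n,n+1}X)$, thereby upgrading the continuity of the latter from $(S^{0})_{lcx}$ to $S^{0}$.
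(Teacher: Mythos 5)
Your proposal follows the same route as the paper's proof: the duality identity $\inner{j'_{n,n+1}X}{\phi}=\inner{X}{j_{n,n+1}\phi}$ gives the semimartingale property and, by composition with the continuous inclusion, the continuity of $\phi \mapsto j'_{n,n+1}X(\phi)$ into $S^{0}$; one then sets $J = I_{X}\circ j_{n,n+1}$ with $j_{n,n+1}\colon b\mathcal{P}(\Phi_{n})\to b\mathcal{P}(\Phi_{n+1})$ the continuous embedding from Proposition \ref{propIntegrandsInductLimit}, checks via \eqref{eqActionWeakIntegSimpleIntegNuclear} that $J$ agrees with $H\mapsto \int H\, d(j'_{n,n+1}X)$ on elementary integrands, and concludes by density of the elementary processes. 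This is exactly the paper's argument, which handles the last step by invoking Corollary 1 of Section 34.3 in \cite{Treves}.

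The one point where you diverge is the justification of that last step, and there your proposed mechanism fails: $(S^{0})_{lcx}$ is \emph{not} Hausdorff whenever $\ProbSpace$ is atomless. Indeed, identify $\xi \in L^{0}\ProbSpace$ with the constant-in-time semimartingale $z_{t}\equiv \xi$; from the definition of $d_{em}$ one sees that the Emery topology induces on these constants precisely the topology of convergence in probability, and in atomless $L^{0}\ProbSpace$ every convex neighborhood of zero is the whole space (write $f = \tfrac{1}{n}\sum_{i=1}^{n} n f\mathbbm{1}_{A_{i}}$ with $\Prob(A_{i})=1/n$ and $1/n<\epsilon$). Hence every convex Emery-neighborhood of zero in $S^{0}$ contains all constant processes, so nonzero elements lie in the $(S^{0})_{lcx}$-closure of $\{0\}$. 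Consequently the principle ``two $(S^{0})_{lcx}$-continuous linear maps agreeing on a dense subspace coincide'' is invalid, and — equivalently, as you correctly note — the stochastic integral is \emph{not} the unique $(S^{0})_{lcx}$-continuous linear extension of \eqref{eqActionWeakIntegSimpleIntegNuclear}: adding to it any nonzero linear map vanishing on the elementary processes and taking values in the closure of $\{0\}$ produces another such extension (any linear map into the closure of $\{0\}$ is automatically continuous). What actually closes the gap, and what the paper's appeal to \cite{Treves} together with Theorem 4.10 of \cite{FonsecaMora:StochInteg} is standing in for, is a uniqueness statement coming from the \emph{construction} of the integral there: $\int H\, d(j'_{n,n+1}X)$ is determined by its values on elementary integrands through a limiting procedure in a Hausdorff topology on $S^{0}$, not through $(S^{0})_{lcx}$-continuity. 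So your fallback phrasing (``implicit in its construction'') is the right instinct, but it must be quoted as such from \cite{FonsecaMora:StochInteg}; it cannot be derived from Hausdorffness of $(S^{0})_{lcx}$, which is false in general.
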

\begin{proof}
Let $X \in \mathbbm{S}^{0}(\Phi_{n+1}') $, we shall prove that $j'_{n,n+1} X \in \mathbbm{S}^{0}(\Phi_{n}')$. 
First, given $\phi \in \Phi_{n}$, we have 
$$ j'_{n,n+1} X(\phi)\defeq \inner{j'_{n,n+1} X}{\phi}=\inner{X}{j_{n,n+1} \phi},$$
is a real-valued semimartingale. Therefore  $j'_{n,n+1} X$ is a $\Phi'_{n}$-valued semimartingale. Second, the continuity of $\phi \mapsto X(\phi)$ from $\Phi_{n+1}$ into $S^{0}$, and the continuity of  $j_{n,n+1}$ from $\Phi_{n}$ into $\Phi_{n+1}$ shows (see the calculation above) that the mapping  
$\phi \mapsto j'_{n,n+1} X(\phi)$ is continuous from $\Phi_{n}$ into $S^{0}$.  

As for our final step we must show that the stochastic integral mapping $I_{j'_{n,n+1} X}: H \mapsto \int \, H \, d(j'_{n,n+1}X)$ is continuous from $b \mathcal{P}(\Phi_{n}) $ into $S^{0}$. By Corollary 1 in Section 34.3 in (\cite{Treves}, p.356) it is enough to show that there exists a linear continuous operator $J: b \mathcal{P}(\Phi_{n}) \rightarrow S^{0}$ which coincides with $I_{j'_{n,n+1} X}$ on a dense subset of $b \mathcal{P}(\Phi_{n}) $. 

First we define such an operator $J$. First, observe that by our assumption the stochastic integral mapping $I_{X}: b \mathcal{P}(\Phi_{n+1}) \rightarrow S^{0}$, $H \mapsto \int \, H \, d(j'_{n,n+1}X)$ is continuous. Second, by Proposition \ref{propIntegrandsInductLimit} (and the remark preceding its proof) the natural inclusion $j_{n,n+1}: b \mathcal{P}(\Phi_{n}) \rightarrow b \mathcal{P}(\Phi_{n+1})$ is linear continuous. Then the mapping  $J \defeq I_{X} \circ j_{n,n+1}: b \mathcal{P}(\Phi_{n}) \rightarrow S^{0}$ is linear continuous. 

Now we show that  $I_{j'_{n,n+1} X}$ and $J$ coincide on the elementary process. Let $H=h \phi$ where $\phi \in \Phi$ and $h \in b\mathcal{P}$. Then, by  \eqref{eqActionWeakIntegSimpleIntegNuclear} we have
\begin{equation}\label{eqEqualElementIntegrandsComposition}
\int \, H \, d(j'_{n,n+1}X)  =  h \cdot \inner{ j'_{n,n+1}X}{\phi} = h \cdot \inner{X}{j_{n,n+1} \phi} \\
 =  \int \, j_{n,n+1} H \, dX. 
\end{equation}
By linearity of the stochastic integral mapping and \eqref{eqEqualElementIntegrandsComposition} we conclude that 
$I_{j'_{n,n+1} X}(H)=J(H)$ for every elementary process $H$. Then, from the arguments given in the above paragraphs $J$ is a continuous extension of  $I_{j'_{n,n+1} X}$ and so the stochastic integral mapping $H \mapsto \int \, H \, d(j'_{n,n+1}X)$ is continuous of $b \mathcal{P}(\Phi_{n}) $ into $S^{0}$. We have shown $j'_{n,n+1} X \in \mathbbm{S}^{0}(\Phi_{n}')$.
\end{proof}

\begin{remark}\label{remarkLimitInductiveTVS}
Let $(\Psi_{n}, k_{n})_{n \in \N}$ be a countable inductive system of locally convex spaces. The topology of inductive limit in  $\Psi=\bigcup_{n \in \N} \Psi_{n}$ can be taken either in the sense of topological vector spaces or of locally convex spaces (see Proposition 6.6.9 in \cite{Jarchow}, p.112). This way, if $\Psi = \underset{n \in \N}{\mbox{ind}} \, \Psi_{n}$ and $\Lambda$ is a topological vector space, a linear mapping $T: \Psi \rightarrow \Lambda$ is continuous if and only if $T \circ k_{n}: \Psi_{n} \rightarrow \Lambda$ is continuous for every $n \in \N$.  
\end{remark}

\begin{theorem}\label{theoGoodIntegratorIntersection}
$\displaystyle \mathbbm{S}^{0}(\Phi')=\bigcap_{n \in \N} \mathbbm{S}^{0}(\Phi'_{n})$. 
\end{theorem}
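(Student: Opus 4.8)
The equality is to be understood through the continuous dual projections $j'_{n} : \Phi' \rightarrow \Phi'_{n}$ induced by the inclusions $j_{n} : \Phi_{n} \rightarrow \Phi$, under which a $\Phi'$-valued process $X$ is sent to the $\Phi'_{n}$-valued process $j'_{n}X = (j'_{n} X_{t} : t \geq 0)$. Since $j_{n} = j_{n+1} \circ j_{n,n+1}$ we have $j'_{n} = j'_{n,n+1} \circ j'_{n+1}$, so the family $(j'_{n}X)_{n \in \N}$ is always compatible with the connecting maps of Lemma \ref{lemmaGoodIntegraDecreasing}; the right-hand side $\bigcap_{n} \mathbbm{S}^{0}(\Phi'_{n})$ is thus to be read as the projective limit of the decreasing chain $\mathbbm{S}^{0}(\Phi'_{n+1}) \subseteq \mathbbm{S}^{0}(\Phi'_{n})$ furnished by that lemma. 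The plan is to prove the two inclusions: that every good integrator in $\Phi'$ projects to a good integrator in each $\Phi'_{n}$, and conversely that every compatible family of good integrators $(X^{(n)})_{n}$ assembles into a single good integrator in $\Phi'$.

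For the inclusion $\mathbbm{S}^{0}(\Phi') \subseteq \bigcap_{n} \mathbbm{S}^{0}(\Phi'_{n})$ I would fix $X \in \mathbbm{S}^{0}(\Phi')$ and $n \in \N$ and run the argument of Lemma \ref{lemmaGoodIntegraDecreasing} verbatim, with $j_{n} : \Phi_{n} \rightarrow \Phi$ playing the role of $j_{n,n+1}$: the pairing $\inner{j'_{n}X}{\phi} = \inner{X}{j_{n}\phi}$ is a semimartingale, and $\phi \mapsto \inner{j'_{n}X}{\phi}$ is continuous from $\Phi_{n}$ into $S^{0}$ by composing the continuity of $\phi \mapsto X(\phi)$ with that of $j_{n}$; and on elementary processes one has $\int H \, d(j'_{n}X) = \int j_{n}H \, dX$ by \eqref{eqActionWeakIntegSimpleIntegNuclear}, so the extension result (Corollary 1 in Section 34.3 in \cite{Treves}) applied to the continuous map $H \mapsto \int j_{n} H \, dX$ shows the stochastic integral mapping of $j'_{n}X$ is continuous into $S^{0}$. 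Hence $j'_{n}X \in \mathbbm{S}^{0}(\Phi'_{n})$ for every $n$, and $(j'_{n}X)_{n}$ is a compatible family.

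For the reverse inclusion I start from a compatible family $(X^{(n)})_{n}$, $X^{(n)} \in \mathbbm{S}^{0}(\Phi'_{n})$ with $j'_{n,n+1}X^{(n+1)} = X^{(n)}$, and define a cylindrical process $X$ on $\Phi$ by $X(\phi) = X^{(n)}(\phi)$ whenever $\phi \in \Phi_{n}$; this is well defined and linear by compatibility. Because $\Phi = \underset{n \in \N}{\mbox{ind}}\, \Phi_{n}$, Remark \ref{remarkLimitInductiveTVS} reduces the continuity of $\phi \mapsto X(\phi)$ from $\Phi$ into $S^{0}$ to the continuity of each restriction $\phi \mapsto X^{(n)}(\phi)$, which holds since $X^{(n)}$ is a good integrator. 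As $\Phi$ is a complete ultrabornological nuclear space, the continuity of $\phi \mapsto X(\phi)$ into $S^{0}$ lets me invoke the regularization theorem (Theorem 3.7 and Proposition 3.14 in \cite{FonsecaMora:Semimartingales}) to obtain a regular c\`{a}dl\`{a}g version $X \in S^{0}(\Phi')$; one then checks $j'_{n}X = X^{(n)}$ (indistinguishable) via Proposition 2.12 in \cite{FonsecaMora:Existence}, which also yields uniqueness.

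It remains to show that this $X$ is a good integrator, and here lies the main obstacle, since a priori the stochastic integral mapping $H \mapsto \int H \, dX$ is only known to be continuous from $b\mathcal{P}(\Phi)$ into the convexification $(S^{0})_{lcx}$, not into $S^{0}$ with the Emery topology. I would exploit that $b\mathcal{P}(\Phi) \simeq \underset{n \in \N}{\mbox{ind}}\, b\mathcal{P}(\Phi_{n})$ (Proposition \ref{propIntegrandsInductLimit}): by Remark \ref{remarkLimitInductiveTVS}, the map $H \mapsto \int H\, dX$ is continuous into $S^{0}$ as soon as each composition $H \mapsto \int j_{n} H \, dX$, for $H \in b\mathcal{P}(\Phi_{n})$, is. This composition and the good-integrator map $H \mapsto \int H \, d(j'_{n}X)$ agree on the dense subspace of $\Phi_{n}$-valued elementary processes by \eqref{eqActionWeakIntegSimpleIntegNuclear}; since the latter is a continuous linear map into $S^{0}$ (as $j'_{n}X = X^{(n)}$ is a good integrator), the extension result (Corollary 1 in Section 34.3 in \cite{Treves}), used exactly as in the proof of Lemma \ref{lemmaGoodIntegraDecreasing}, forces the two to coincide, whence $H \mapsto \int j_{n} H \, dX$ is continuous into $S^{0}$ for every $n$. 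Therefore $H \mapsto \int H \, dX$ is continuous from $b\mathcal{P}(\Phi)$ into $S^{0}$, so $X \in \mathbbm{S}^{0}(\Phi')$, which completes the reverse inclusion and the proof.
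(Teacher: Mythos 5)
Your proof is correct, and its technical core coincides with the paper's: both directions rest on the identity $\int H \, d(j'_{n}X) = \int j_{n}H \, dX$, first verified on elementary processes via \eqref{eqActionWeakIntegSimpleIntegNuclear} and then propagated by the coincidence argument from Corollary 1 in Section 34.3 of \cite{Treves}, combined with Remark \ref{remarkLimitInductiveTVS} and Proposition \ref{propIntegrandsInductLimit} to reduce continuity on the inductive limits $\Phi$ and $b\mathcal{P}(\Phi)$ to continuity on the pieces $\Phi_{n}$ and $b\mathcal{P}(\Phi_{n})$. The one genuine difference is your reading of the right-hand side. The paper interprets $\bigcap_{n \in \N} \mathbbm{S}^{0}(\Phi'_{n})$ as the set of $\Phi'$-valued processes $X$ such that $j'_{n}X \in \mathbbm{S}^{0}(\Phi'_{n})$ for all $n$, so in its converse inclusion the process $X$ is given from the outset and only the three defining properties of a good integrator must be checked. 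You instead read the intersection as the projective limit of the chain of Lemma \ref{lemmaGoodIntegraDecreasing}, i.e. as compatible families $(X^{(n)})_{n}$, and consequently insert an assembly step: gluing the family into a cylindrical semimartingale, getting continuity into $S^{0}$ via Remark \ref{remarkLimitInductiveTVS}, regularizing to a c\`{a}dl\`{a}g process $X \in S^{0}(\Phi')$, and identifying $j'_{n}X$ with $X^{(n)}$ through Proposition 2.12 in \cite{FonsecaMora:Existence}. This costs extra work but buys a point the paper leaves implicit: it shows that every element of $\underset{n \in \N}{\mbox{proj}} \, \mathbbm{S}^{0}(\Phi'_{n})$ --- precisely the object used immediately after the theorem to topologize $\mathbbm{S}^{0}(\Phi')$ --- really does arise from a $\Phi'$-valued good integrator, so the identification underlying the projective-limit topology is fully justified. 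Your appeal to the Tr\`{e}ves coincidence argument in the last step is sound for the same reason it is in Lemma \ref{lemmaGoodIntegraDecreasing}: the map $H \mapsto \int j_{n}H \, dX$ is a priori continuous into $(S^{0})_{lcx}$, being the composition of the continuous inclusion $b\mathcal{P}(\Phi_{n}) \rightarrow b\mathcal{P}(\Phi)$ with the integral mapping of Theorem 4.10 in \cite{FonsecaMora:StochInteg}, so agreement with the continuous map $H \mapsto \int H \, d(j'_{n}X)$ on the dense set of elementary processes forces equality, and continuity into $S^{0}$ follows.
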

\begin{proof}
Let $X \in  \mathbbm{S}^{0}(\Phi')$, we shall prove that $j'_{n} X \in \mathbbm{S}^{0}(\Phi'_{n})$ $\forall n \in \N$. This way, we must proof that 
$j'_{n} X(\phi)$ is a real-valued semimartingale for each $\phi \in \Phi_{n}$,  that the mapping $\phi \mapsto j'_{n}X(\phi)=\inner{X}{j_{n} \phi}$ is continuous from $\Phi_{n}$ into $S^{0}$, and that the stochastic integral mapping $H \mapsto \int \, H \, d(j'_{n}X)$ is continuous from $b \mathcal{P}(\Phi_{n}) $ into $S^{0}$. The above can be proved by following the same arguments as in the proof of Lemma \ref{lemmaGoodIntegraDecreasing} with $j'_{n}$ instead of $j'_{n,n+1}$. We leave the details to the reader. 

Assume now that $X$ is a $\Phi'$-valued process with the property that $j'_{n} X \in \mathbbm{S}^{0}(\Phi'_{n})$ $\forall n \in \N$. We will show that $X \in \mathbbm{S}^{0}(\Phi')$. Let $\phi \in \Phi$.  Since $\Phi=\bigcup_{n \in \N} \Phi_{n}$, there exists $m \in \N$ such that $\phi \in \Phi_{m}$. As $j'_{m} X \in \mathbbm{S}^{0}(\Phi'_{m})$, we have $\inner{X}{\phi}=\inner{j'_{n}X}{\phi} \in S^{0}$. 

As the next step we must show that the mapping $\phi \mapsto X(\phi)=\inner{X}{\phi}$ is continuous from $\Phi$ into $S^{0}$. Since $\Phi=\underset{n \in \N}{\mbox{ind}} \,\Phi_{n}$ and by Remark \ref{remarkLimitInductiveTVS}, it is equivalent to show that for every $n \in \N$ the mapping  $\phi \mapsto X \circ j_{n} (\phi)$ is continuous from $\Phi_{n}$ into $S^{0}$. But this holds true since $\forall n \in \N$, $X \circ j_{n}(\phi)=j'_{n}X(\phi)$ for every $\phi \in \Phi_{n}$ and  $j'_{n} X \in \mathbbm{S}^{0}(\Phi'_{n})$. 

Finally, we must prove that the stochastic integral mapping $I_{X}:b \mathcal{P}(\Phi) \rightarrow S^{0}$, $H \mapsto \int \, H \, d(X)$, is continuous. As before, by Remark \ref{remarkLimitInductiveTVS} it is equivalent to show that for every $n \in \N$ the mapping $I_{X} \circ j_{n}: 
b \mathcal{P}(\Phi_{n}) \rightarrow S^{0}$ is continuous. Let $n \in \N$, as in the proof of Lemma \ref{lemmaGoodIntegraDecreasing}, one can show that 
\begin{equation}\label{eqCompatibleStochIntegInclusionMapping}
I_{j'_{n}X}(H)=\int \, H \, d(j'_{n}X) = \int \, j_{n}H \, dX = I_{X} \circ j_{n}(H), \quad \forall H \in  b \mathcal{P}(\Phi_{n}). 
\end{equation}
In effect, \eqref{eqCompatibleStochIntegInclusionMapping} can be proved as in \eqref{eqEqualElementIntegrandsComposition} replacing $j_{n,n+1}$ with $j_{n}$. Therefore, $I_{X} \circ j_{n}$ is continuous since by our assumption the stochastic integral mapping $I_{j'_{n}X}$ is continuous. We have shown $X \in \mathbbm{S}^{0}(\Phi')$.
\end{proof}

\textbf{Construction of the topology on $\mathbbm{S}^{0}(\Phi')$.} By Lemma \ref{lemmaGoodIntegraDecreasing} and Theorem \ref{theoGoodIntegratorIntersection}  we can equip the space $\mathbbm{S}^{0}(\Phi')$ with the projective limit topology determined by the projective system $(\mathbbm{S}^{0}(\Phi'_{n}), j'_{n,n+1}, j'_{n})_{n \in \N}$ of complete metrizable topological vector spaces. By Proposition 2.8.3 and Corollary 3.2.7 in \cite{Jarchow} we have $\mathbbm{S}^{0}(\Phi')= \underset{n \in \N}{\mbox{proj}} \, \mathbbm{S}^{0}(\Phi'_{n})$ is a complete metrizable topological vector space and a generating family  of $F$-seminorms for its topology is the collection $(d^{n}_{\mathbbm{S}^{0}}: n \in \N)$, where for $n \in \N$, 
$$ d^{n}_{\mathbbm{S}^{0}}(X)=\sup \left\{  d_{em} \left( \int \, H \, d(j'_{n} X) \right): H \in b\mathcal{P}(\Phi_{n}), \, \sup_{(t,\omega)} q_{n}(H_{t}(\omega)) \leq 1   \right\}. $$
and $q_{n}$ is an $F$-seminorm that generates the topology on $\Phi_{n}$.

\begin{corollary}\label{coroContiBilinearMappingLFNuclearSpace}
The bilinear mapping $(X,H) \mapsto \int \, H \, dX$ defined by the stochastic integral is continuous from $ \mathbbm{S}^{0}(\Phi') \times b\mathcal{P}(\Phi)$ (equipped with the product topology) into $S^{0}$. 
\end{corollary}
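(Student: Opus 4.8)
The plan is to deduce the joint continuity of $B(X,H)=\int H\,dX$ from the nuclear Fréchet case already settled in Corollary \ref{coroContiStochIntegralMapping}, using the two structural descriptions established above: $b\mathcal{P}(\Phi)=\underset{n \in \N}{\mbox{ind}}\, b\mathcal{P}(\Phi_{n})$ (Proposition \ref{propIntegrandsInductLimit}) and $\mathbbm{S}^{0}(\Phi')=\underset{n \in \N}{\mbox{proj}}\, \mathbbm{S}^{0}(\Phi'_{n})$ (Theorem \ref{theoGoodIntegratorIntersection} and the construction of its topology). Since a bilinear map between topological vector spaces is continuous precisely when it is continuous at the origin, the argument is organised around transferring the continuity of the Fréchet-level bilinear maps $B_{n}(X',H')=\int H'\,dX'$ on $\mathbbm{S}^{0}(\Phi'_{n})\times b\mathcal{P}(\Phi_{n})$ up to the limit spaces.

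First I would record the relevant continuity at the level of the steps. Writing $j'_{n}:\mathbbm{S}^{0}(\Phi')\to\mathbbm{S}^{0}(\Phi'_{n})$ for the canonical continuous projection of the projective limit and $j_{n}:b\mathcal{P}(\Phi_{n})\to b\mathcal{P}(\Phi)$ for the canonical continuous inclusion, the compatibility identity \eqref{eqCompatibleStochIntegInclusionMapping} yields $B(X,j_{n}H')=\int j_{n}H'\,dX=\int H'\,d(j'_{n}X)=B_{n}(j'_{n}X,H')$ for all $H'\in b\mathcal{P}(\Phi_{n})$. As $B_{n}$ is continuous by Corollary \ref{coroContiStochIntegralMapping} applied to the nuclear Fréchet space $\Phi_{n}$, and $j'_{n}$ is continuous, the restriction of $B$ to each slice $\mathbbm{S}^{0}(\Phi')\times b\mathcal{P}(\Phi_{n})$ is jointly continuous. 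The same identity gives separate continuity of $B$ on the whole product: continuity in $H$ for fixed $X$ is the good-integrator property of $X$, and continuity in $X$ for fixed $H$ follows by choosing $m$ with $H=j_{m}H'$ and composing $j'_{m}$ with $B_{m}(\cdot,H')$.

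The crux is the passage from continuity on every slice $\mathbbm{S}^{0}(\Phi')\times b\mathcal{P}(\Phi_{n})$ to joint continuity on $\mathbbm{S}^{0}(\Phi')\times b\mathcal{P}(\Phi)$. This cannot be handled by the F-space bilinear criterion (Corollary 8 in \cite{Swartz:1984}) used in Corollary \ref{coroContiStochIntegralMapping}, because $b\mathcal{P}(\Phi)$, a proper strict inductive limit of the Fréchet spaces $b\mathcal{P}(\Phi_{n})$, is not metrizable; moreover the naive attempt to pick a single neighbourhood of $0$ in $\mathbbm{S}^{0}(\Phi')$ serving all steps at once fails, since a countable intersection of neighbourhoods need not be a neighbourhood. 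I would close the gap by exploiting that $\mathbbm{S}^{0}(\Phi')$ is a complete metrizable (hence Baire) topological vector space together with the strict-inductive-limit structure of $b\mathcal{P}(\Phi)$: because products with a metrizable space commute with countable inductive limits, $\mathbbm{S}^{0}(\Phi')\times b\mathcal{P}(\Phi)\simeq\underset{n \in \N}{\mbox{ind}}\,\left(\mathbbm{S}^{0}(\Phi')\times b\mathcal{P}(\Phi_{n})\right)$ is again a strict inductive limit of complete metrizable spaces, on each step of which $B$ is continuous; equivalently, by Remark \ref{remarkLimitInductiveTVS} the linear assignment $H\mapsto B(\cdot,H)$ is continuous from $b\mathcal{P}(\Phi)$ into the space of continuous linear operators from $\mathbbm{S}^{0}(\Phi')$ into $S^{0}$ for the topology that encodes joint continuity, its verification on each step being supplied by Corollary \ref{coroContiStochIntegralMapping}. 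I expect the careful justification of this transfer to be the main obstacle, the delicate point being that $S^{0}$, and hence $\mathbbm{S}^{0}(\Phi')$, is not locally convex, so that one must argue throughout at the level of F-spaces rather than invoke the classical locally convex theory of separately continuous bilinear maps.
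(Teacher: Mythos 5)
Your slice-level computations coincide exactly with the ones in the paper's own proof: via \eqref{eqCompatibleStochIntegInclusionMapping}, writing $H=j_{m}\tilde{H}$ with $\tilde{H}\in b\mathcal{P}(\Phi_{m})$ gives $\int H\,dX=\int\tilde{H}\,d(j'_{m}X)$ and hence $d_{em}\bigl(\int H\,dX\bigr)\leq M\,d^{m}_{\mathbbm{S}^{0}}(X)$, which is continuity in $X$ for fixed $H$; continuity in $H$ for fixed $X$ is the definition of a good integrator; and Corollary \ref{coroContiStochIntegralMapping} applied to each $\Phi_{n}$ gives joint continuity of each restriction to $\mathbbm{S}^{0}(\Phi')\times b\mathcal{P}(\Phi_{n})$. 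You also located the crux correctly: the Swartz criterion behind Corollary \ref{coroContiStochIntegralMapping} needs both factors metrizable, and $b\mathcal{P}(\Phi)$ is not. (For what it is worth, the paper's own proof does not address this point either: it asserts that ``by the arguments used in the proof of Corollary \ref{coroContiStochIntegralMapping}'' separate continuity in $X$ suffices, i.e., it performs precisely the reduction you flagged, without further justification.)

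The device you use to close the gap is, however, a false principle, and in fact the gap cannot be closed. First, products with a metrizable (even Fr\'{e}chet) space do not commute with countable strict inductive limits, and continuity on every slice plus separate continuity does not imply joint continuity: pointwise multiplication $\mathcal{C}^{\infty}(\R)\times\mathscr{D}(\R)\rightarrow\mathscr{D}(\R)$ is jointly continuous on every slice $\mathcal{C}^{\infty}(\R)\times\mathscr{D}_{K}$ ($K$ compact) and separately continuous, yet not jointly continuous. Second, Remark \ref{remarkLimitInductiveTVS} is the universal property for \emph{linear} maps; a bilinear map is not linear on the product, so continuity on the steps of $\mbox{ind}_{n}\,(\mathbbm{S}^{0}(\Phi')\times b\mathcal{P}(\Phi_{n}))$ would not transfer even if your identification held, and there is no vector topology on $\mathcal{L}(\mathbbm{S}^{0}(\Phi'),S^{0})$ whose continuity ``encodes joint continuity'', because $\mathbbm{S}^{0}(\Phi')$ is not normable. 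Third, and decisively, the conclusion itself fails whenever the inductive limit is proper, so no argument from these ingredients can succeed. Take $\Phi=\mathscr{D}(\R)$, with $\Phi_{N}$ the test functions supported in $[-N,N]$, and let $z_{t}=t$, a nonzero deterministic semimartingale. Suppose the bilinear map were continuous, let $W$ be an arbitrary neighborhood of $0$ in $S^{0}$, and choose neighborhoods $U,V$ of zero with $\int H\,dX\in W$ on $U\times V$. Since the topology of $\mathbbm{S}^{0}(\Phi')$ is generated by the countable family $(d^{n}_{\mathbbm{S}^{0}})$, there are $N$ and $\delta>0$ with $\{X:d^{n}_{\mathbbm{S}^{0}}(X)<\delta,\ n\leq N\}\subseteq U$; and for $n\leq N$ the seminorm $d^{n}_{\mathbbm{S}^{0}}(X)$ vanishes whenever $j'_{N}X=0$, since $j'_{n}X$ only sees the restriction of $X$ to $\Phi_{n}\subseteq\Phi_{N}$. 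Pick $x_{0}$ with $\abs{x_{0}}>N$ and $\phi_{0}\in\mathscr{D}(\R)$ with $\phi_{0}(x_{0})=1$. The processes $X^{(c)}_{t}\defeq c\,t\,\delta_{x_{0}}$, $c>0$, are good integrators (the map $H\mapsto\inner{\delta_{x_{0}}}{H}$ is continuous from $b\mathcal{P}(\Phi)$ into $b\mathcal{P}$, and integration against $z$ is continuous from $b\mathcal{P}$ into $S^{0}$), and $j'_{N}X^{(c)}=0$ because $\inner{X^{(c)}_{t}}{\phi}=c\,t\,\phi(x_{0})=0$ for $\phi\in\Phi_{N}$; hence $X^{(c)}\in U$ for every $c>0$. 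Also the constant process $s\phi_{0}$ lies in $V$ for some small $s>0$. By \eqref{eqActionWeakIntegSimpleIntegNuclear}, $\int(s\phi_{0})\,dX^{(c)}=cs\,z$, so $W$ contains $az$ for every $a>0$, in particular $z\in W$. As $W$ was arbitrary and $S^{0}$ is Hausdorff, this forces $z=0$, a contradiction. So joint continuity genuinely fails for proper strict inductive limits such as $\mathscr{D}(\R)$; what your estimates (and the paper's) actually establish, and what survives of the corollary, is separate continuity together with joint continuity of each restriction to $\mathbbm{S}^{0}(\Phi')\times b\mathcal{P}(\Phi_{n})$.
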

\begin{proof} By the arguments used in the proof of Corollary \ref{coroContiStochIntegralMapping}, it is enough to show that for a given $H \in b \mathcal{P}(\Phi)$, the mapping $X \mapsto \int \, H \, dX$ is continuous from $\mathbbm{S}^{0}(\Phi')$ into $S^{0}$. 

Let $H \in b \mathcal{P}(\Phi)$. By Proposition \ref{propIntegrandsInductLimit} there exists $m \in \N$ and $\tilde{H} \in  b \mathcal{P}(\Phi_{m})$ for which 
$H=j_{m} \tilde{H}$. Then there exists $M>0$ such that $\sup_{(t,\omega)} q_{m}(\tilde{H}_{t}(\omega)) \leq M$. Hence we have by \eqref{eqCompatibleStochIntegInclusionMapping} and the definition of $d^{m}_{\mathbbm{S}^{0}}$ that
$$ d_{em}\left( \int \, H \, dX \right) = d_{em}\left( \int \,  \tilde{H} \, d(j'_{m}X) \right) \leq M d^{m}_{\mathbbm{S}^{0}}(X).$$ Therefore, the mapping $X \mapsto \int \, H \, dX$ is continuous.
\end{proof}

\section{UCP convergence of solutions to SPDEs with semimartingale noise}\label{sectUCPConvSPDEs}

Through this section, $\Phi$ denotes a complete bornological (hence ultrabornological) nuclear space whose strong dual $\Phi'$ is complete and nuclear. These properties are satisfied by all the examples listed in Section \ref{subSecUltrobornoUCP}. 

\subsection{SPDEs with semimartingale noise}

We denote by $\mathcal{H}^{1}_{S}$ the Banach space (see Section 16.2 in \cite{CohenElliott}) of all the  real-valued semimartingales $x=(x_{t}:t \geq 0)$ satisfying 
$$\norm{x}_{\mathcal{H}^{1}_{S}} \defeq \inf \left\{ \Exp  [m,m]_{\infty}^{1/2} + \Exp \int_{0}^{\infty} \abs{d a_{s} } : x=m+a \right\}<\infty, $$
where the infimum is taken over all the decompositions $x=m+a$ as a sum of a local martingale $m$ and a process of finite variation $a$, and  $([m,m]_{t}: t \geq 0)$ denotes the quadratic variation process associated to the local martingale $m$. 

Let $Z=(Z_{t}: t \geq 0)$  be a  $\mathcal{H}^{1}_{S}$-valued semimartingale in $\Phi'$, i.e.  $\forall \phi \in \Phi$, $Z(\phi) \in \mathcal{H}^{1}_{S}$. We further assume that the mapping $Z: \Phi \rightarrow S^{0}$ is continuous, then by Proposition 4.12 in \cite{FonsecaMora:StochInteg} we have that $Z$ is a good integrator. 

Let $A$ denotes the generator of a strongly continuous $C_{0}$-semigroup $(S(t):t \geq 0) \subseteq \mathcal{L}(\Phi,\Phi)$ (see \cite{Komura:1968} for information on $C_{0}$-semigroups in locally convex spaces). Consider the following class of stochastic evolution equations
$$ dY_{t}=A'Y_{t} dt+dZ_{t}, \quad t \geq 0, $$
with initial condition $Y_{0}=\eta$ $\Prob$-a.e. where $\eta$  is a $\mathcal{F}_{0}$-measurable regular random variable. A \emph{weak solution} to the above equation is a $\Phi'$-valued regular adapted process $Y=(Y_{t}: t \geq 0)$ satisfying that for any given $t > 0$ and  $\phi \in \mbox{Dom}(A)$ we have $\int_{0}^{t} \abs{\inner{Y_{r}}{A\phi}} dr < \infty$ $\Prob$-a.e. and 
\begin{equation*}\label{eqWeakSoluStochEvolEqua}
\inner{Y_{t}}{\phi}= \inner{\eta}{\phi} + \int_{0}^{t} \inner{Y_{r}}{A\phi} dr + \inner{Z_{t}}{\phi}. 
\end{equation*}
If we further assume that $Z$ has continuous paths and $A \in \mathcal{L}(\Phi,\Phi)$, then by Theorem 6.11 in \cite{FonsecaMora:StochInteg} there exists a unique weak solution $(Y_{t}: t \geq 0)$ which is regular and has continuous paths, and satisfies $\Prob$-a.e.
$$
\inner{Y_{t}}{\phi}= \inner{\eta -Z_{0}}{S(t)\phi}  +\int_{0}^{t} \inner{Z_{s}}{S(t-s)A\phi} ds +\inner{Z_{t}}{\phi}, \quad \forall \, t \geq 0, \, \phi \in \Phi.  
$$

\subsection{Convergence of solutions}

For each $n=0,1,2,\cdots$, let $Z^n=(Z^{n}_{t}: t \geq 0)$  be a  $\mathcal{H}^{1}_{S}$-valued semimartingale with continuous paths for which the mapping $Z^{n}: \Phi \rightarrow S^{0}$ is continuous,  $(S^{n}(t):t \geq 0)$ a strongly continuous $C_{0}$-semigroup with generator $A^{n} \in  \mathcal{L}(\Phi,\Phi)$, $\eta^{n}$ a  $\mathcal{F}_{0}$-measurable regular random variable. 
As mentioned in the previous section, the stochastic evolution equation
$$ dY^{n}_{t}=(A^{n})'Y^{n}_{t} dt +dZ^{n}_{t}, \quad Y^{n}_{0}=\eta^n, $$
has unique weak solution $(Y^{n}_{t}: t \geq 0)$ which is regular and has continuous paths, and satisfies $\Prob$-a.e. $\forall \, t \geq 0$,  $\phi \in \Phi$,
$$
\inner{Y^{n}_{t}}{\phi}= \inner{\eta^{n} -Z^{n}_{0}}{S^{n}(t)\phi}  +\int_{0}^{t} \inner{Z^{n}_{s}}{S^{n}(t-s)A^{n}\phi} ds +\inner{Z^{n}_{t}}{\phi}.  
$$

Sufficient conditions for ucp convergence of $Y^{n}$ to $Y^{0}$ are given below: 

\begin{theorem}\label{theoSuffiUCPCongSoluSPDEs}
Assume the following:
\begin{enumerate}
\item \label{assuIniCondBounded} $\eta^{0}$ and $Z^{0}_{0}$ have a bounded image in $\Phi'$ for  $\Prob$-a.e. $\omega \in \Omega$.  
\item \label{assuConvSequEta} $\eta^{n} \rightarrow \eta^{0}$ in probability in $\Phi'$ as $ n\rightarrow \infty$. 
\item \label{assuConvSemig} $\forall \phi \in \Phi$, $S^{n}(t)\phi \rightarrow S^{0}(t)\phi$ as $n \rightarrow \infty$ uniformly in $t$ on any bounded interval of time. 
\item  \label{assuUCPZ} $\forall \phi \in \Phi$, $\inner{Z^{n}}{\phi} \overset{ucp}{\rightarrow}  \inner{Z^{0}}{\phi}$. 
\end{enumerate}
Then $Y^{n} \overset{ucp}{\rightarrow}  Y^{0}$. 
\end{theorem}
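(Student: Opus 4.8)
The plan is to reduce everything to the scalar UCP convergence $\inner{Y^{n}}{\phi} \overset{ucp}{\rightarrow} \inner{Y^{0}}{\phi}$ for each fixed $\phi \in \Phi$ and then invoke the ultrabornological UCP theorem. Since $\Phi$ is bornological, hence barrelled, each $Y^{n}$ is a $\Phi'$-valued continuous process with Radon distributions (regularity together with Theorem 2.10 in \cite{FonsecaMora:Existence}), so once the scalar convergence is established, Theorem \ref{theoUCPUltrabornologicalProcesses} produces a limit $Y$ with $Y^{n} \overset{ucp}{\rightarrow} Y$ as $\Phi'$-valued processes; Proposition \ref{propUCPFiniteDimensionProjections} together with uniqueness of UCP limits then forces $\inner{Y}{\phi}$ and $\inner{Y^{0}}{\phi}$ to be indistinguishable for every $\phi$, and the regularity of both identifies $Y=Y^{0}$. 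It is convenient to record at the outset that the same theorem applied to the noises, using assumption \ref{assuUCPZ}, yields $Z^{n} \overset{ucp}{\rightarrow} Z^{0}$ as $\Phi'$-valued processes; thus $\sup_{0 \leq s \leq T} p(Z^{n}_{s}-Z^{0}_{s}) \to 0$ in probability for every continuous seminorm $p$ on $\Phi'$ and every $T>0$, which is the uniform-in-time control on the noise that I shall use repeatedly.

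For the scalar convergence I would start from the explicit mild-solution formula. Written as given, it mixes the generators $A^{n}$ into the drift integral $\int_{0}^{t} \inner{Z^{n}_{s}}{S^{n}(t-s)A^{n}\phi}\,ds$, and since the hypotheses say nothing about convergence of $A^{n}\phi$ this term cannot be compared termwise. The first move is therefore to apply the stochastic integration by parts formula of \cite{FonsecaMora:StochInteg} to this drift integral; the boundary contributions cancel against the terms $\inner{Z^{n}_{0}}{S^{n}(t)\phi}$ and $\inner{Z^{n}_{t}}{\phi}$ already present, leaving the generator-free representation
\begin{equation*}
\inner{Y^{n}_{t}}{\phi}=\inner{\eta^{n}}{S^{n}(t)\phi}+\int_{0}^{t} S^{n}(t-s)\phi\,dZ^{n}_{s}, \qquad n \geq 0,
\end{equation*}
where the last term is the stochastic integral of the deterministic $\Phi$-valued integrand $s \mapsto S^{n}(t-s)\phi$ against $Z^{n}$.

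The deterministic term is handled by splitting $\inner{\eta^{n}}{S^{n}(t)\phi}-\inner{\eta^{0}}{S^{0}(t)\phi}$ into $\inner{\eta^{n}-\eta^{0}}{S^{0}(t)\phi}$ and $\inner{\eta^{n}}{(S^{n}(t)-S^{0}(t))\phi}$. The set $\{S^{0}(t)\phi : 0 \leq t \leq T\}$ is compact, hence bounded, in $\Phi$, so the first piece tends to $0$ uniformly in $t$ in probability by the strong convergence \ref{assuConvSequEta}; for the second piece I would further split $\eta^{n}=(\eta^{n}-\eta^{0})+\eta^{0}$, bound the $\eta^{0}$ part using that $\eta^{0}$ is $\Prob$-a.e.\ contained in a bounded, hence equicontinuous, subset of $\Phi'$ (assumption \ref{assuIniCondBounded}) combined with $\sup_{0 \leq t \leq T} r((S^{n}(t)-S^{0}(t))\phi) \to 0$ from \ref{assuConvSemig}, and bound the $(\eta^{n}-\eta^{0})$ part by the seminorm of $\eta^{n}-\eta^{0}$ on the bounded set $\{(S^{n}(t)-S^{0}(t))\phi : t \leq T,\, n \in \N\}$. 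For the convolution I would write the difference as $\int_{0}^{t} S^{0}(t-s)\phi\,d(Z^{n}-Z^{0})_{s}+\int_{0}^{t}(S^{n}(t-s)-S^{0}(t-s))\phi\,dZ^{n}_{s}$; integrating the first integral by parts involves only the fixed generator $A^{0}$ and is controlled by $Z^{n} \overset{ucp}{\rightarrow} Z^{0}$ paired against the compact sets $\{S^{0}(r)\phi\}$ and $\{S^{0}(r)A^{0}\phi\}$.

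The hard part will be the last term $\int_{0}^{t}(S^{n}(t-s)-S^{0}(t-s))\phi\,dZ^{n}_{s}$: its integrand tends to $0$ uniformly by \ref{assuConvSemig}, but the integrator $Z^{n}$ varies, and integrating by parts would reintroduce the uncontrolled $A^{n}\phi$, so it must be treated as a genuine stochastic integral with a null integrand, uniformly in the upper limit $t$. This is precisely the situation that the topology on good integrators and the joint continuity of the stochastic integral (Corollary \ref{coroContiStochIntegralMapping}) are designed for: the plan is to control this term by a stochastic-dominated-convergence argument, using assumption \ref{assuIniCondBounded} and the $\mathcal{H}^{1}_{S}$-structure of the noises to obtain a uniform-in-$n$ bound on the family $\{Z^{n}\}$ of good integrators, so that a null integrand produces a stochastic integral that is small uniformly in $n$ and in $t \leq T$. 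Establishing this uniform equicontinuity of $\{Z^{n}\}$, and doing so uniformly in the convolution parameter $t$, is the main obstacle of the proof.
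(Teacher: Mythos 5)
Your reduction to scalar UCP convergence via Theorem \ref{theoUCPUltrabornologicalProcesses}, your explicit derivation of $Z^{n} \overset{ucp}{\rightarrow} Z^{0}$ as $\Phi'$-valued processes, and your treatment of the term $\inner{\eta^{n}}{S^{n}(t)\phi}$ all run parallel to the paper's proof. But the proof is not complete, and the missing step is not a technicality: the term $\int_{0}^{t}(S^{n}(t-s)-S^{0}(t-s))\phi \, dZ^{n}_{s}$ --- which only exists because you rewrote the solution in mild form --- is left unproved, and the strategy you sketch for it cannot be carried out from the stated hypotheses. You would need equicontinuity, uniform in $n$, of the stochastic integral mappings $H \mapsto \int H \, dZ^{n}$ on $b\mathcal{P}(\Phi)$, so that integrands tending to zero produce integrals tending to zero uniformly in $n$ (and uniformly in the convolution parameter $t$). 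Nothing supplies this: assumption \ref{assuIniCondBounded} concerns only $\eta^{0}$ and $Z^{0}_{0}$, not the family $(Z^{n}: n \geq 1)$; the fact that each $Z^{n}$ is $\mathcal{H}^{1}_{S}$-valued carries no bound that is uniform in $n$; and assumption \ref{assuUCPZ} is UCP convergence, which, in contrast with convergence in Emery's topology, gives no control whatsoever on stochastic integrals --- the distance between these two topologies is precisely why the good-integrator topology of Section \ref{subSectGoodIntegFrechet} had to be constructed. Nor can a Banach--Steinhaus argument close the gap: UCP-boundedness of $(\inner{Z^{n}}{\phi}: n \in \N)$ does not give boundedness in $(S^{0},d_{em})$, and pointwise convergence of $\int H\, dZ^{n}$ is essentially the statement you are trying to prove. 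So what you candidly call the ``main obstacle'' is a genuine, and in this formulation unfillable, gap.

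The idea you missed is that the generator-laden drift term should not be eliminated but estimated directly: it is a pathwise Lebesgue integral, so it is dominated by $T$ times a supremum and no stochastic integration enters at all. The paper splits
\begin{align*}
\int_{0}^{t} \inner{Z^{n}_{s}}{S^{n}(t-s)A^{n}\phi}\,ds - \int_{0}^{t} \inner{Z^{0}_{s}}{S^{0}(t-s)A^{0}\phi}\,ds
&= \int_{0}^{t} \inner{Z^{n}_{s}-Z^{0}_{s}}{S^{n}(t-s)A^{n}\phi}\,ds \\
&\quad + \int_{0}^{t} \inner{Z^{0}_{s}}{S^{n}(t-s)A^{n}\phi - S^{0}(t-s)A^{0}\phi}\,ds,
\end{align*}
so that the varying difference $Z^{n}-Z^{0}$ is paired with the set $D=\bigcup_{n}\bigcup_{0 \leq s \leq t \leq T} S^{n}(t-s)A^{n}\phi$, which assumption \ref{assuConvSemig} is read as making bounded in $\Phi$; then the $\Phi'$-valued (strong topology) convergence $Z^{n} \overset{ucp}{\rightarrow} Z^{0}$, which you yourself derived, controls $\sup_{t \leq T}\sup_{\psi \in D}\abs{\inner{Z^{n}_{t}-Z^{0}_{t}}{\psi}}$. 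Meanwhile the single fixed process $Z^{0}$ is paired with the vanishing functions $S^{n}(t-s)A^{n}\phi - S^{0}(t-s)A^{0}\phi$, and is controlled by Theorem 3.22 in \cite{FonsecaMora:Semimartingales}, which localizes $Z^{0}$ in one Hilbert space $\Phi'_{\rho}$, after which assumption \ref{assuConvSemig} applied with the seminorm $\rho$ finishes. Note in particular that the paper does use assumption \ref{assuConvSemig} to control the compositions $S^{n}(\cdot)A^{n}\phi$ (their boundedness and their uniform convergence to $S^{0}(\cdot)A^{0}\phi$); your premise that the drift ``cannot be compared termwise'' because nothing is assumed about $A^{n}\phi$ is exactly where you diverged from the intended reading of that hypothesis, and it is what pushed you into a stochastic integral against varying integrators --- the one object the hypotheses cannot control.
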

\begin{proof} In view of Theorem \ref{theoUCPUltrabornologicalProcesses}, to show $Y^{n} \overset{ucp}{\rightarrow}  Y^{0}$ it sufficies to show $\inner{Y^{n}}{\phi}  \overset{ucp}{\rightarrow}  \inner{Y^{0}}{\phi}$ for every $\phi \in \Phi$.

Given $\phi \in \Phi$, $t \geq 0$, we have 
\begin{eqnarray*}
\inner{Y^{n}_{t}-Y^{0}_{t}}{\phi} 
& = &  \inner{\eta^{n}-Z^{n}_{0}}{S^{n}(t)\phi}- \inner{\eta^{0}-Z^{0}_{0}}{S^{0}(t)\phi}  \\ 
& {} & + \int_{0}^{t} \inner{Z^{n}_{s}}{S^{n}(t-s)A^{n}\phi} ds - \int_{0}^{t} \inner{Z^{0}_{s}}{S^{0}(t-s)A^{0}\phi} ds \\
& {} & + \inner{Z^{n}_{t}- Z^{0}_{t}}{\phi} .
\end{eqnarray*}
Let $T>0$. We must check that the three terms on the right-hand side of the above equality all converge uniformly on $[0,T]$ in probability to $0$ as $n \rightarrow \infty$.

Let $\epsilon>0$. By \ref{assuUCPZ} we have $ \Prob \left( \sup_{0 \leq t \leq T} \abs{\inner{Z^{n}_{t}-Z^{0}_{t}}{\phi}} > \epsilon \right) \rightarrow 0$ as $n \rightarrow \infty$. 

Now, observe that
\begin{flalign*}
& \Prob \left( \sup_{0 \leq t \leq T} \abs{\inner{\eta^{n}-Z^{n}_{0}}{S^{n}(t)\phi}-\inner{\eta^{0}-Z^{0}_{0}}{S^{0}(t)\phi} } > \epsilon \right) \\
&  \leq \Prob \left( \sup_{0 \leq t \leq T} \abs{\inner{\eta^{n}-\eta^{0}}{S^{n}(t)\phi}} > \epsilon \right)  + \Prob \left( \sup_{0 \leq t \leq T} \abs{\inner{Z^{n}_{0}-Z^{0}_{0}}{S^{n}(t)\phi}} > \epsilon \right) \\
& + \Prob \left( \sup_{0 \leq t \leq T} \abs{\inner{\eta^{0}}{S^{n}(t)\phi-S^{0}(t)\phi}} > \epsilon \right)  + \Prob \left( \sup_{0 \leq t \leq T} \abs{\inner{Z^{0}_{0}}{S^{n}(t)\phi-S^{0}(t)\phi}} > \epsilon \right).
\end{flalign*}
We must check that the four terms on the right-hand side of the above equality all converge uniformly on $[0,T]$ in probability to $0$ as $n \rightarrow \infty$. 

Let $B=\bigcup_{n}\bigcup_{0 \leq t \leq T} S^{n}(t)\phi$, which is bounded in $\Phi$ by \ref{assuConvSemig}. Thus, $q_{B}$ given by $q_{B}(f)=\sup_{\psi \in B} \abs{ \inner{f}{\psi}}$ is a continuous seminorm on $\Phi$. By \ref{assuConvSequEta}, as $n \rightarrow \infty$ we have
$$ \Prob \left( \sup_{0 \leq t \leq T} \abs{\inner{\eta^{n}-\eta^{0}}{S^{n}(t)\phi}} > \epsilon \right)  \leq \Prob \left(  q_{B}(\eta^{n}-\eta^{0}) > \epsilon \right) \rightarrow 0,$$
and by \ref{assuUCPZ}, as $n \rightarrow \infty$ we have
$$ \Prob \left( \sup_{0 \leq t \leq T} \abs{\inner{Z^{n}_{0}-Z^{0}_{0}}{S^{n}(t)\phi}} > \epsilon \right) \leq  \Prob \left(  q_{B}(Z^{n}_{0}-Z^{0}_{0}) > \epsilon \right) \rightarrow 0.$$

Now, by \ref{assuIniCondBounded} there exists a bounded $K$ of $\Phi'$ such that $\eta^{0}(\omega), Z^{0}_{0}(\omega) \in K$ for $ \Prob$-a.e. $\omega \in \Omega$. Being $\Phi$ reflexive, we have that $p_{K}$ defined by $p_{K}(\psi)=\sup_{f \in K} \abs{\inner{f}{\psi}}$ is a continuous seminorm on $\Phi$. Then, by  \ref{assuConvSemig} as $n \rightarrow \infty$ we have
$$ \Prob \left( \sup_{0 \leq t \leq T} \abs{\inner{\eta^{0}}{S^{n}(t)\phi-S^{0}(t)\phi}} > \epsilon \right) \leq  
\Prob \left( \sup_{0 \leq t \leq T} p_{K}\left( S^{n}(t)\phi-S^{0}(t)\phi \right)  > \epsilon \right) \rightarrow 0, $$
and
$$ \Prob \left( \sup_{0 \leq t \leq T} \abs{\inner{Z^{0}_{0}}{S^{n}(t)\phi-S^{0}(t)\phi}} > \epsilon \right) \leq  
\Prob \left( \sup_{0 \leq t \leq T} p_{K}\left( S^{n}(t)\phi-S^{0}(t)\phi \right)  > \epsilon \right) \rightarrow 0.$$

As for our final step, observe that 
\begin{flalign*}
& \Prob \left( \abs{  \int_{0}^{t} \inner{Z^{n}_{s}}{S^{n}(t-s)A^{n}\phi} ds - \int_{0}^{t} \inner{Z^{0}_{s}}{S^{0}(t-s)A^{0}\phi} ds } > \epsilon \right)  \\
& \leq  \Prob \left(   \int_{0}^{t} \abs{ \inner{Z^{n}_{s}-Z^{0}_{s}}{S^{n}(t-s)A^{n}\phi} } ds   > \epsilon \right) \\
& +\Prob \left(   \int_{0}^{t} \abs{ \inner{Z^{0}_{s}}{S^{n}(t-s)A^{n}\phi- S^{0}(t-s)A^{0}\phi} } ds   > \epsilon \right). 
\end{flalign*} 

Let $D=\bigcup_{n}\bigcup_{0 \leq s \leq  t \leq T} S^{n}(t-s)A^{n} \phi$, which is bounded in $\Phi$ by \ref{assuConvSemig}. 
We therefore have $q_{D}$ given by $q_{D}(f)=\sup_{\psi \in D} \abs{ \inner{f}{\psi}}$ is a continuous seminorm on $\Phi$, and by \ref{assuUCPZ}, as $n \rightarrow \infty$ we have
$$ \Prob \left(   \int_{0}^{t} \abs{ \inner{Z^{n}_{s}-Z^{0}_{s}}{S^{n}(t-s)A^{n}\phi} } ds   > \epsilon \right) \leq  \Prob \left(  \sup_{0 \leq t \leq T}  q_{D}(Z^{n}_{t}-Z^{0}_{t}) > \epsilon/T \right) \rightarrow 0.$$
Now, since $Z^{0}$ is a  a  $\mathcal{H}^{1}_{S}$-valued semimartingale with continuous paths for which the mapping $Z^{0}: \Phi \rightarrow S^{0}$ is continuous, by Theorem 3.22 in \cite{FonsecaMora:Semimartingales} there exists a continuous Hilbertian seminorm $\rho$ on $\Phi$ such that $Z^{0}$ has a $\Phi'_{\rho}$-valued continuous version. Therefore, there exists $C=C(\rho,T)>0$ such that $\sup_{0 \leq t \leq T} \rho'(Z_{t}^{0}) \leq C$ $\Prob$-a.e.   
Then, by  \ref{assuConvSemig} as $n \rightarrow \infty$ we have
\begin{multline*}
\Prob \left(   \int_{0}^{t} \abs{ \inner{Z^{0}_{s}}{S^{n}(t-s)A^{n}\phi- S^{0}(t-s)A^{0}\phi} } ds   > \epsilon \right) \\
 \leq \Prob \left( \sup_{0 \leq s\leq t \leq T} \rho \left( S^{n}(t-s)A^{n}\phi- S^{0}(t-s)A^{0}\phi \right) > \epsilon /TC \right)  \rightarrow 0.
\end{multline*}
\end{proof}

\begin{remark}
In \cite{PerezAbreuTudor:1992} the authors study UCP convergence of solutions to linear stochastic evolution equations driven by square integrable martingale noise and under the assumption that $\Phi$ is Fr\'echet nuclear. Theorem \ref{theoSuffiUCPCongSoluSPDEs} constitutes an extension of the result in \cite{PerezAbreuTudor:1992} when $\Phi$ is  ultrabornological and for the semimartingale noise setting. 
\end{remark}



\smallskip

\noindent \textbf{Acknowledgements}  This work was supported by The University of Costa Rica through the grant ``821-C2-132- Procesos cil\'{i}ndricos y ecuaciones diferenciales estoc\'{a}sticas''.








\begin{thebibliography}{HD}


  







\bibitem{BojdeckiGorostiza:1991} Bojdecki, T. and Gorostiza, L. G.: Gaussian and non-Gaussian distribution-valued Ornstein-Uhlenbeck processes, \emph{Canad. J. Math.}, {43}, no.6, 1136--1149 (1991).
      
\bibitem{CohenElliott} Cohen, S. N.; Elliott, R. J.: \emph{Stochastic calculus and applications}, Second edition. Probability and its Applications, Springer (2015).

\bibitem{DawsonGorostiza:1990} Dawson, D. A. and Gorostiza, L. G.: Generalized solutions of a class of nuclear-space-valued stochastic evolution equations, \emph{Appl. Math. Optim.}, {22}, no.3, 241--263 (1990).
      
        


\bibitem{FonsecaMora:Existence}  Fonseca-Mora, C. A.: \emph{Existence of Continuous and C\`{a}dl\`{a}g Versions for Cylindrical Processes in the Dual of a Nuclear Space}, J Theor Probab, {31}, no.2, 867--894 (2018). 



\bibitem{FonsecaMora:Semimartingales} Fonseca-Mora, C. A.: \emph{Semimartingales on Duals of Nuclear Spaces}, Electron. J. Probab., Volume 25, paper no. 36 (2020).


\bibitem{FonsecaMora:StochInteg} Fonseca-Mora, C. A.: \emph{Stochastic Integration With Respect to Cylindrical Semimartingales}, Electron. J. Probab., Volume 26, paper no. 147 (2021).


\bibitem{FonsecaMora:AlmostSure} Fonseca-Mora, C.A.: \emph{Almost Sure Uniform Convergence of Stochastic Processes in the Dual of a Nuclear Space},  J Theor Probab, {36}, no.4, 2564--2589 (2023).


\bibitem{FonsecaMora:WeakConverg} Fonseca-Mora, C.A.: \emph{Tightness and weak convergence in the topology of local uniform convergence for stochastic processes in the dual of a nuclear space}, preprint. 






      
\bibitem{Jarchow} Jarchow, H.: \emph{Locally Convex Spaces}, Mathematische Leitf\"{a}den, Springer (1981). 





\bibitem{KallianpurXiong} Kallianpur, G.;  Xiong, J.: \emph{Stochastic Differential Equations in Infinite Dimensional Spaces}, Lecture Notes-Monograph Series, Institute of Mathematical Statistics (1995).

\bibitem{KarandikarRao} Karandikar, R. L.; Rao, B. V.: \emph{Introduction to stochastic calculus}, Indian Statistical Institute Series, Springer (2018).




     

\bibitem{Komura:1968}  K\={o}mura, T.: Semigroups of operators in locally convex spaces, \emph{J. Funct. Anal.}, {2}, 258--296 (1968).


\bibitem{Memin:1980} Memin, J.: Espaces de semimartingales et changement de probabilit\'{e}, \emph{Z. Wahrsch. Verw. Gebiete}, {52}, no. 1, 9--39 (1980).
     





\bibitem{NariciBeckenstein}  Narici, L.; Beckenstein, E.: \emph{Topological Vector Spaces}, Pure and Applied Mathematics, CRC Press, second edition (2011). 


\bibitem{PerezAbreu:1988} P\'{e}rez-Abreu, V.: Decompositions of semimartingales on $\mathscr{S}'$, \emph{J. Funct. Anal.}, {80}, no. 2, 358--370 (1988). 

\bibitem{PerezAbreuTudor:1992}  P\'{e}rez-Abreu, V.;  Tudor, C.: \emph{Regularity and convergence of stochastic convolutions in duals of nuclear Fr\'{e}chet spaces}, J. Multivariate Anal. {43}, no. 2, 185-–199 (1992). 

\bibitem{PerezAbreuRochaArteagaTudor:2005} P\'{e}rez-Abreu, V., Rocha-Arteaga, A. and Tudor, C.: Cone-additive processes in duals of nuclear F\'{e}chet spaces, \emph{Random. Oper. and Stoch. Equ.}, {13}, no. 4, 353--368 (2005). 

\bibitem{Pietsch} Pietsch, A.: \emph{Nuclear Locally Convex Spaces}, Ergebnisse der Mathematikund ihrer Grenzgebiete, Springer (1972).  

\bibitem{Protter} Protter, P. E.: \emph{Stochastic Integration and Differential Equations}. Second edition, version 2.1, Stochastic Modelling and Applied Probability 21 Springer-Verlag (2005).     

\bibitem{Schaefer} Schaefer, H.: \emph{Topological Vector Spaces}, Graduate Texts in Mathematics, Springer, second edition (1999). 



     
     
\bibitem{Swartz:1984} Swartz, C.: Continuity and hypocontinuity for bilinear maps, \emph{Math. Z.}, {186}, no. 3, 321--329 (1984).


     
\bibitem{Treves} Tr\`{e}ves, F.: \emph{Topological Vector Spaces, Distributions and Kernels}, Pure and Applied Mathematics, Academic Press (1967). 

\bibitem{Ustunel:1982}  \"{U}st\"{u}nel, A. S.: \emph{Stochastic integration on nuclear spaces and its applications}, Ann. Inst. H. Poincaré Sect. B (N.S.), {18}, no. 2, 165--200 (1982). 

\bibitem{Ustunel:1982-1} \"{U}st\"{u}nel, A. S.: A characterization of semimartingales on nuclear spaces, \emph{Z. Wahrsch. Verw. Gebiete}, {60}, no.1, 21--39 (1982). 

\bibitem{Ustunel:1986} \"{U}st\"{u}nel, A. S.: Applications of integration by parts formula for infinite-dimensional semimartingales, \emph{J. Multivariate Anal.}, {18}, 287--299 (1986). 
     
\bibitem{VakhaniaTarieladzeChobanyan}  Vakhania, N. N.;  Tarieladze, V. I.;  Chobanyan, S. A.: \emph{Probability Distributions on Banach Spaces}, Reidel Publishing  (1987). 



     
\end{thebibliography}
\end{document}